\newtheorem{theorem}{Theorem}
\newtheorem{corollary}{Corollary}
\newtheorem{definition}{Definition}
\newtheorem{lemma}{Lemma}
\newtheorem{remark}{Remark}
\newcommand{\eps}{\varepsilon}
\DeclareMathAlphabet{\mathpzc}{OT1}{pzc}{m}{it}
\DeclareMathOperator{\Graph}{Gr}
\DeclareMathOperator{\fix}{Fix}
\DeclareMathOperator{\F}{F}
\DeclareMathOperator{\es}{K}
\DeclareMathOperator{\A}{A}
\DeclareMathOperator{\ka}{k}
\DeclareMathOperator{\co}{co}
\DeclareMathOperator{\dist}{dist}
\DeclareMathOperator{\spn}{span}
\newcommand{\w}{\tilde}
\newcommand{\map}{\multimap}
\newcommand{\<}{\leqslant}
\newcommand{\h}{{\mathscr H}}
\newcommand{\n}{{n\geqslant 1}}
\newcommand{\K}{{k\geqslant 1}}
\newcommand{\e}{{\mathscr E}}
\newcommand{\z}[1]{(#1)_{n=1}^\infty}
\newcommand{\x}[1]{\{#1\}_{n=1}^\infty}
\newcommand{\R}[1]{\mathbb{R}^{#1}}
\newcommand{\f}{\left}
\newcommand{\g}{\right}
\newcommand{\res}[2]{#1\:\rule[-1.5mm]{0.45pt}{4mm}\,\rule[-1mm]{0mm}{4mm}_{#2}}
\begin{document}
\title[Solvability of inclusions of Hammerstein type]{Solvability of inclusions of Hammerstein type}
\author{Rados\l aw Pietkun}
\subjclass[2010]{34A60, 34G25, 45N05, 47H10, 47H30}
\keywords{acyclic set, admissible map, boundary value problem, evolution inclusion, fixed point, integral inclusion, $m$-accretive, operator inclusion, $p$-integrable solution}
\address{Toru\'n, Poland}
\email{rpietkun@pm.me}

\begin{abstract}
We establish a universal rule for solving operator inclusions of Hammerstein type in Lebesgue--Bochner spaces with the aid of some recently proven continuation theorem of Leray--Schauder type for the class of so-called admissible multimaps. Examples illustrating the legitimacy of this approach include the initial value problem for perturbation of $m$-accretive mutivalued differential equation, the nonlocal Cauchy problem for semilinear differential inclusion, abstract integral inclusion of Fredholm and Volterra type and the two-point boundary value problem for nonlinear evolution inclusion.
\end{abstract}
\maketitle

\section{Introduction}
This paper aims to formulate quite natural and easily verifiable hypotheses, ensuring solvability of the following inclusion of Hammerstein type
\begin{equation}\label{hammer}
u\in (K\circ N_F)(u)
\end{equation}
in the space $L^p(I,E)$ of Bochner $p$-integrable functions. In inclusion \eqref{hammer}, $N_F$ is the Nemytski\v{\i} operator associated to a multifunction $F\colon I\times E\map E$, while $K$ is an external set-valued operator of a certain type (defined later).\par Consideration of such operator inclusion accompany, of course, attempts to grasp the integro-{diffe\-ren\-tial} multivalued problems from a unifying topological point of view. These attempts have been made repeatedly (see for instance \cite{cor,couch1,couch2,precup,drici,jank,lupu,zecca}). Our efforts follow in the footsteps of authors of \cite{precup} by taking into account the situation, where the operator $K$ is not only nonlinear but possibly multivalued and does not necessarily have a quasi-integral form. The main result (Theorem \ref{solvability}.) regarding the existence of solutions to inclusion \eqref{hammer} poses an application example of a fixed point approach. Its proof is based on the principle of Leray--Schauder type \cite[Theorem 3.2]{regan}, which was extended (Theorem \ref{fixed}.) to the case of admissible multimaps (in the sense of G\'orniewicz, \cite[Definition 40.1]{gorn}). Just as in \cite{precup} the superposition $K\circ N_F$ may not be a condensing map and our assumptions about $K$ and $F$ are formulated so that the M\"onch type compactness condition could have been satisfied.\par In order to apply Eilenberg--Montgomery type fixed point argument directly to the superposition $K\circ N_F$ we need to know that this map is pseudo-acyclic. However, the Nemytski\v{\i} operator $N_F\colon L^p\map L^q$ is by no means acyclic. Therefore, the authors of \cite{precup} rely on the assumption (SG) that operator $H:=K\circ N_F$ has acyclic values. This is very uncomfortable hypothesis from practical point of view. In general, if $K$ is nonlinear, then the composite map $H$ may not have convex values. Unfortunately, even if $K$ and $F$ has convex values the map $H$ may still have values with ``awful'' geometry, since the class of acyclic mappings is not closed with respect to the composition law. It turns out that it is enough to take into account a relatively weak assumption regarding convexity or decomposability of fibers of the operator $K$ in addition to the acyclicity of its values, to ensure the fulfillment of condition (SG).\par The applicability of our abstract existence result is richly illustrated by numerous examples of differential and integral inclusions, which may be interpreted as a fixed point problem given by \eqref{hammer}. These examples include cases where the operator $K$ is a univalent mild solution operator of the $m$-accretive quasi-autonomous problem or the mild solution operator of the semilinear inhomogeneous two-point boundary value problem. There were also presented examples in which the map $K$ is simply linear. Such as those, in which it has the form of Volterra or Hammerstein integral operator. And finally, there is also the case considered, when the map $K$ constitutes a multivalued strongly upper semicontinuous maximal monotone operator.

\section{Preliminaries}
Let $(E,|\cdot|)$ be a Banach space, $E^*$ its normed dual and $\sigma(E,E^*)$ its weak topology. If $X$ is a subset of a Banach space $E$, by $(X,w)$ we denote the topological space $X$ furnished with the relative weak topology of $E$. The symbol $(X,|\cdot|)$ stands for the topological space $X$ endowed with the restriction of the norm-topology of $E$ to $X$.\par The normed space of bounded linear endomorphisms of $E$ is denoted by $\mathcal{L}(E)$. Given $T\in\mathcal{L}(E)$, $||T||_{{\mathcal L}}$ is the norm of $T$. For any $\eps>0$ and $A\subset E$, $B_E(A,\eps)$ ($D_E(A,\eps)$) stands for an open (closed) $\eps$-neighbourhood of the set $A$. The (weak) closure and the closed convex envelope of $A$ will be denoted by ($\overline{A}^w$) $\overline{A}$ and $\overline{\co}\,A$, respectively. If $x\in E$ we put $\dist(x,A):=\inf\{|x-y|\colon y\in A\}$. Besides, for two nonempty closed bounded subsets $A$, $B$ of $E$ the symbol $h(A,B)$ stands for the Hausdorff distance from $A$ to $B$, i.e. $h(A,B):=\max\{\sup\{\dist(x,B)\colon x\in A\},\sup\{\dist(y,A)\colon y\in B\}\}$.\par We denote by $(C(I,E),||\cdot||)$ the Banach space of all continuous maps $I\to E$ equipped with the maximum norm. Let $1\<p\<\infty$. By $(L^p([a,b],E),||\cdot||_p)$ we mean the Banach space of all Bochner $p$-integrable maps $f\colon[a,b]\to E$ i.e., $f\in L^p([a,b],E)$ iff map f is strongly measurable and \[||f||_p=\left(\int_a^b|f(t)|^p\,{\rm d}t\right)^{\frac{1}{p}}<\infty\] if $p<\infty$ and respectively 
\[||f||_\infty=\underset{t\in[a,b]}{{\rm ess}\,{\rm sup}}\,|f(t)|<\infty\] provided $p=\infty$.
Recall that strong measurability is equivalent to the usual measurability in case $E$ is separable. A subset $D\subset L^p([a,b],E)$ is called decomposable if for every $u, w\in D$ and every Lebesgue measurable $A\subset [a,b]$ we have $u\cdot{\bf 1}_A+w\cdot{\bf 1}_{A^c}\in D$. 
\par Given metric space X, a set-valued map $F\colon X\map E$ assigns to any $x \in X$ a nonempty subset $F(x)\subset E$. $F$ is (weakly) upper semicontinuous, if the small inverse image $F^{-1}(A)=\{x\in X\colon F(x)\subset A\}$ is open in $X$ whenever $A$ is (weakly) open in $E$. A map $F\colon X\map E$ is lower semicontinuous, if the inverse image $F^{-1}(A)$ is closed in $X$ for any closed $A\subset E$. We say that $F\colon X\map E$ is upper hemicontinuous if for each $p\in E^*$, the function $\sigma(p,F(\cdot))\colon X\to\R{}\cup\{+\infty\}$ is upper semicontinuous (as an extended real function), where $\sigma(p,F(x))=\sup_{y\in F(x)}\langle p,y\rangle$. We have the following characterization (\cite[Proposition 2(b)]{bothe}): a map $F\colon X\map E$ with convex values is weakly upper semicontinues and has weakly compact values iff given a sequence $(x_n,y_n)$ in the graph $\Graph(F)$ with $x_n\xrightarrow[n\to\infty]{X}x$, there is a subsequence $y_{k_n}\xrightharpoonup[n\to\infty]{E}y\in F(x)$ ($\rightharpoonup$ denotes the weak convergence). A multifunction $F\colon X\map E$ is compact if its range $F(X)$ is relatively compact in $E$. It is quasicompact if its restriction to any compact subset $A\subset X$ is compact. The set of all fixed points of the map $F\colon E\map E$ is denoted by $\fix(F)$. \par Let $H^\ast(\cdot)$ denote the Alexander-Spanier cohomology functor with coefficients in the field of rational numbers ${\mathbb Q}$ (see \cite{spanier}). We say that a topological space $X$ is acyclic if the reduced cohomology $\w{H}^q(X)$ is $0$ for any $q\geqslant 0$.
\begin{definition}[\protect{\cite[Definition 40.1]{gorn}}]\label{admis}
Let $\mathbb{F}$ be a Fr\'echet space and $\Gamma$ a Hausdorff topological space. A surjective continuous map $p\colon\Gamma\to\mathbb{F}$ is called a Vietoris map if $p$ is closed and, for every $x\in\mathbb{F}$, the set $p^{-1}(x)$ is compact acyclic. A set-valued map $F\colon\mathbb{F}\map\mathbb{F}$ is admissible, if there is a Hausdorff topological space $\Gamma$ and two continuous functions $p\colon\Gamma\to\mathbb{F}$ and $q\colon\Gamma\to\mathbb{F}$ from which $p$ is a Vietoris map such that $F(x)=q(p^{-1}(x))$ for every $x\in\mathbb{F}$.
\end{definition}
An admissible set-valued map is an upper semicontinuous one with compact connected values. If the aforesaid Vietoris map $p\colon\Gamma\to\mathbb{F}$ is not only proper but open at the same time, then the admissible set-valued map $F\colon\mathbb{F}\map\mathbb{F}$ is also continuous. An upper semicontinuous map $F\colon\mathbb{F}\map\mathbb{F}$ is called acyclic if it has compact acyclic values. Evidently, every acyclic map is admissible. Moreover, the composition of admissible maps is admissible (\cite[Theorem 40.6]{gorn}).\par A real function $\gamma$ defined on the family ${\mathcal B}(E)$ of bounded subsets of $E$ is called a measure of non-compactness (MNC) if $\gamma(\Omega)=\gamma(\overline{\co}\,\Omega)$ for any bounded subset $\Omega$ of $E$. The following example of MNC is of particular importance: given $E_0\subset E$ and $\Omega\in{\mathcal B}(E_0)$, 
\[\beta_{E_0}(\Omega):=\inf\f\{\eps>0:\text{ there are finitely many points }x_1,\dots,x_n\in E_0\text{ with }\Omega\subset\bigcup_{i=1}^nB_E(x_i,\eps)\g\}\]
is the Hausdorff MNC relative to the subspace $E_0$. Recall that this measure is regular i.e., $\beta_{E_0}(\Omega)=0$ iff $\Omega$ is relatively compact in $E_0$; monotone i.e., if $\Omega_1\subset\Omega_2$ then $\beta_{E_0}(\Omega_1)\<\beta_{E_0}(\Omega_2)$ and invariant with respect to union with compact sets i.e., $\beta_{E_0}(A\cup\Omega)=\beta_{E_0}(\Omega)$ for any relatively compact $A\subset E_0$.
\par We recall the reader following results on account of their practical importance. The first is a weak compactness criterion in $L^p(\Omega,E)$, which originates from \cite{ulger}.
\begin{theorem}[\protect{\cite[Corollary 9.]{ulger}}]\label{ulger}
Let $(\Omega,\Sigma,\mu)$ be a finite measure space with $\mu$ being a nonatomic measure on $\Sigma$. Let $A$ be a uniformly $p$-integrable subset of $L^p(\Omega,E)$ with $p\in[1,\infty)$. Assume that for a.a. $\omega\in\Omega$, the set $\{f(\omega)\colon f\in A\}$ is relatively weakly compact in $E$. Then $A$ is relatively weakly compact.
\end{theorem}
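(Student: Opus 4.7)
The plan is to apply the Eberlein--Smulian theorem and reduce the claim to extracting a weakly convergent subsequence from an arbitrary sequence $(f_n)\subset A$. Uniform $p$-integrability on the finite measure space $(\Omega,\Sigma,\mu)$ yields boundedness of $\{\|f_n\|_p\}_n$ together with uniform tail estimates $\lim_{M\to\infty}\sup_n\int_{\{|f_n|\geqslant M\}}|f_n|^p\,d\mu=0$, and strong measurability of the $f_n$ confines their essential ranges to a common separable closed subspace $E_0\subset E$.

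The first substantive step is to extract a subsequence converging pointwise weakly almost everywhere. Fix a countable norming subset $D\subset E_0^*$. For each $e^*\in D$ the scalar sequence $(\langle e^*,f_n(\cdot)\rangle)_n$ is bounded and uniformly integrable in $L^p(\Omega,\mathbb{R})$, hence weakly relatively compact (by reflexivity when $p>1$, by Dunford--Pettis when $p=1$). A diagonal procedure over $D$, combined with the pointwise weak compactness of $\{f_n(\omega)\}_n$ and, if necessary, a Koml\'os-type passage to Ces\`aro averages, produces a subsequence $(f_{n_k})$ and a strongly measurable $f\colon\Omega\to E_0$ with $f_{n_k}(\omega)\rightharpoonup f(\omega)$ for a.a.\ $\omega\in\Omega$.

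To promote this pointwise weak convergence to weak convergence in $L^p(\Omega,E)$, I would test against functionals of the form $u\mapsto\int_\Omega\langle e^*(\omega),u(\omega)\rangle\,d\mu$ with $e^*\in L^q(\Omega,E_0^*)$, invoking Vitali's convergence theorem under the uniform $p$-integrability hypothesis to pass to the limit. The main obstacle is that $L^p(\Omega,E)^*$ is not in general $L^q(\Omega,E^*)$, so this class of tensor functionals need not separate points on all of $L^p(\Omega,E)$; showing that it suffices for the specific sequence $(f_{n_k})$ requires that uniform $p$-integrability together with pointwise weak compactness effectively concentrates the sequence on a weakly compact-valued portion of $E$ on all but small-measure sets. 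An alternative route avoiding this difficulty is to apply James's theorem to $\overline{\co}\,A$: every $\varphi\in L^p(\Omega,E)^*$ attains its supremum on $\overline{\co}\,A$ via pointwise maximization, using the weak compactness of $\overline{\{f(\omega):f\in A\}}^w$ for a.a.\ $\omega$ together with a measurable selection argument to assemble the maximizer.
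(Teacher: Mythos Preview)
The paper does not supply its own proof of this statement: Theorem~\ref{ulger} is quoted as \cite[Corollary~9]{ulger} and used as a black box throughout, so there is no argument in the manuscript to compare against.

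As for your proposal on its own merits, it is an outline with two genuine gaps that you partly acknowledge but do not close. First, the ``pointwise weak convergence'' step does not follow from the diagonalization you describe. Diagonalizing over a countable norming set $D\subset E_0^*$ yields, for each fixed $e^*\in D$, weak convergence of the scalar sequence $\langle e^*,f_{n_k}\rangle$ in $L^p(\Omega,\mathbb{R})$; this is not pointwise a.e.\ convergence, and invoking Koml\'os replaces the subsequence by Ces\`aro means, after which you are no longer proving relative weak compactness of the original set $A$ without further argument. Producing a single subsequence that converges weakly in $E$ at almost every $\omega$ simultaneously is essentially the heart of the Diestel--Ruess--Schachermayer theorem, not a preliminary reduction. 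Second, both the ``test against $L^q(\Omega,E_0^*)$'' route and the James-theorem alternative founder on exactly the obstacle you name: for a Banach space $E$ without the Radon--Nikod\'ym property in $E^*$, the dual $L^p(\Omega,E)^*$ strictly contains $L^q(\Omega,E^*)$, so neither tensor-type functionals nor ``pointwise maximization via measurable selection'' addresses an arbitrary $\varphi\in L^p(\Omega,E)^*$. \"Ulger's original proof (for $p=1$, with the general case reduced to it) bypasses this by exploiting a lifting and a structural characterization of weak compactness in $L^1(\mu,E)$; your sketch offers no substitute for that machinery, so as written it does not establish the claim.
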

\begin{remark}
{\em The genuine formulation of this result assumes the boundedness of the set $A$. However, the fact that $\mu$ is nonatomic means that uniform integrability of $A$ entails its boundedness.}
\end{remark}

The next property is commonly known as the Convergence Theorem for upper hemicontinuous maps with convex values (the mentioned below version is proved in \cite{pietkun3}).
\begin{theorem}\label{convergence}
Let $(\Omega,\Sigma,\mu)$ be a $\sigma$-finite measure space, $E$ a Banach and $F\colon E\map E$ a closed convex valued upper hemicontinuous multimap. Assume that functions $f_n,f\colon\Omega\to E$ and $g_n,g\colon\Omega\to E$ are such that
\[\begin{cases}
g_n(x)\xrightarrow[n\to\infty]{E}g(x)\;\text{a.e. on }\Omega,\\
f_n\xrightharpoonup[n\to\infty]{L^1(\Omega,E)}f,\\
f_n(x)\in\overline{\co}\,B(F(B(g_n(x),\eps_n)),\eps_n)\;\text{a.e. on }\Omega,
\end{cases}\]
where $\eps_n\xrightarrow[n\to\infty]{}0^+$. Then $f(x)\in F(g(x))$ a.e. on $\Omega$.
\end{theorem}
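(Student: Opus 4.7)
The plan is to combine Mazur's lemma in $L^1(\Omega, E)$ with the dual description of upper hemicontinuity via the support function $\sigma(p, F(\cdot))$. Reducing by $\sigma$-finiteness to the case $\mu(\Omega) < \infty$, one first upgrades the weak $L^1$-convergence $f_n \rightharpoonup f$ to pointwise convergence of convex averages: Mazur's lemma yields finite convex combinations $\tilde{f}_n = \sum_{k=n}^{m_n} \lambda_k^n f_k$ (with $\lambda_k^n \geqslant 0$, $\sum_k \lambda_k^n = 1$) converging to $f$ in $L^1$-norm, whence a subsequence satisfies $\tilde{f}_n(x) \to f(x)$ for all $x$ outside a null set $N_0$.

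Next, fix $p \in E^*$. Using the standard identities $\sigma(p, \overline{\co}\, A) = \sigma(p, A)$ and $\sigma(p, B(A, \eps)) = \sigma(p, A) + \eps\|p\|$, together with $\sigma(p, F(S)) = \sup_{y \in S} \sigma(p, F(y))$, the third hypothesis gives
\[
\langle p, f_k(x) \rangle \leqslant \sup_{y \in B(g_k(x), \eps_k)} \sigma(p, F(y)) + \eps_k\|p\|
\]
for almost every $x$. Averaging against $\lambda_k^n$ and passing to the limit invokes two facts: that $g_k(x) \to g(x)$ and $\eps_k \to 0$, collapsing the balls $B(g_k(x), \eps_k)$ onto $g(x)$; and that upper hemicontinuity of $F$ makes the extended-real function $y \mapsto \sigma(p, F(y))$ upper semicontinuous, so that $\limsup_k \sup_{y \in B(g_k(x), \eps_k)} \sigma(p, F(y)) \leqslant \sigma(p, F(g(x)))$. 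Combined with $\tilde{f}_n(x) \to f(x)$, this produces
\[
\langle p, f(x) \rangle \leqslant \sigma(p, F(g(x)))
\]
outside a $p$-dependent null set $N_p$.

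Since each $F(g(x))$ is closed and convex, Hahn--Banach separation identifies it as the intersection $\bigcap_{p \in E^*}\{y : \langle p, y \rangle \leqslant \sigma(p, F(g(x)))\}$, so the desired inclusion $f(x) \in F(g(x))$ follows once the inequality above holds for every $p$ simultaneously, outside one common null set. Strong measurability forces $f$, $g$, and the sequences $(f_n)$, $(g_n)$ to be essentially separably valued; restricting to a closed separable subspace $E_0 \subseteq E$ containing their combined essential ranges, the unit ball of $E_0^*$ is weak* metrizable and hence admits a countable weak*-dense subfamily $\{p_j\}$ that suffices to test membership in closed convex subsets of $E_0$. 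The union $N_0 \cup \bigcup_j N_{p_j}$ remains null, yielding the claim.

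The main obstacle is exactly this final quantifier exchange: passing from ``for each $p$, for a.e.\ $x$'' to ``for a.e.\ $x$, for every $p$'' is not automatic when $E^*$ is large, and is forced through only by the separability reduction together with the lower semicontinuity of $p \mapsto \sigma(p, F(g(x)))$. The remaining steps---support-function arithmetic, Mazur's lemma, and pointwise limit passage against an upper semicontinuous functional---are essentially mechanical.
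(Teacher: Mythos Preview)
The paper does not actually prove this statement: it is quoted from \cite{pietkun3} and used as a tool. So there is no in-paper proof to compare against, and your proposal has to stand on its own.

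Your core machinery is right, but you have manufactured a difficulty that is not there, and your attempted resolution of that phantom difficulty is flawed. After Mazur you obtain a single null set $N_0$ off which $\tilde f_n(x)\to f(x)$; the hypotheses supply two further null sets $N_1$ (where $g_n(x)\not\to g(x)$) and $N_2$ (where the inclusion $f_k(x)\in\overline{\co}\,B(F(B(g_k(x),\eps_k)),\eps_k)$ fails for some $k$). None of these sets depends on $p$. For every $x\notin N_0\cup N_1\cup N_2$ and \emph{every} $p\in E^*$, the support-function chain you wrote down is valid and yields $\langle p,f(x)\rangle\le\sigma(p,F(g(x)))$; the limit passage uses only upper semicontinuity of $y\mapsto\sigma(p,F(y))$ at the fixed point $g(x)$, which holds for each $p$ separately once $x$ is fixed. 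Hence $f(x)\in F(g(x))$ for all such $x$ by Hahn--Banach, and the proof is already finished before your ``main obstacle'' arises.

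Your separability reduction, besides being unnecessary, is also not correct as written: the values $F(g(x))$ are closed convex subsets of $E$, not of the separable subspace $E_0$ generated by the ranges of $f,g,f_n,g_n$, so a weak*-dense family in the unit ball of $E_0^*$ (even after Hahn--Banach extension to $E^*$) need not suffice to test membership of $f(x)$ in $F(g(x))$. The weak* lower semicontinuity of $p\mapsto\sigma(p,F(g(x)))$ points the wrong way for the approximation you want. Drop this paragraph; the earlier part of your argument already delivers the conclusion with a single, $p$-independent exceptional set.
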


The third result is a Lefschetz-type fixed point theorem for admissible multimaps.
\begin{theorem}[\protect{\cite[Theorem 7.4]{gorn2}}]\label{Lefschetz}
Let $X$ be an absolute extensor for the class of compact metrizable spaces and $F\colon X\map X$ be an admissible map such that $F(X)$ is contained in a compact metrizable subset of $X$. Then $F$ has a fixed point.
\end{theorem}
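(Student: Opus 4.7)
The plan is to reduce the problem to a fixed point question for an admissible self-map of a compact absolute retract, where an Eilenberg--Montgomery type result is classically available. First, pick a compact metrizable subset $M\subset X$ with $F(X)\subset M$, and embed $M$ as a closed subset of the Hilbert cube $Q=[0,1]^{\mathbb N}$; this is possible because every compact metric space embeds in $Q$. Since $X$ is an absolute extensor for the class of compact metrizable spaces and $M$ is closed in $Q$, the inclusion $i\colon M\hookrightarrow X$ extends to a continuous map $\phi\colon Q\to X$ with $\phi|_M=i$.

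Next, I transport $F$ to a self-map of $Q$ through $\phi$. Writing $F=q\circ p^{-1}$ with $p\colon\Gamma\to X$ Vietoris and $q\colon\Gamma\to X$ continuous, form the pullback
\[
\Gamma'=\{(y,\gamma)\in Q\times\Gamma\colon\phi(y)=p(\gamma)\}
\]
together with projections $p'(y,\gamma)=y$ and $q'(y,\gamma)=q(\gamma)$. A routine check shows that $p'$ is Vietoris: it inherits closedness and properness from $p$, and its fibre $(p')^{-1}(y)$ is homeomorphic to $p^{-1}(\phi(y))$, hence compact and acyclic. Therefore $F\circ\phi$ is admissible, and since $(F\circ\phi)(Q)\subset F(X)\subset M\subset Q$, this map restricts to an admissible self-map $\widetilde F\colon Q\map Q$.

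Because the Hilbert cube $Q$ is a compact metric absolute retract, the admissible-map version of the Lefschetz fixed point theorem produces a point $y_0\in Q$ with $y_0\in\widetilde F(y_0)$. As $\widetilde F(Q)\subset M$, necessarily $y_0\in M$; but then $\phi(y_0)=y_0$, and consequently $y_0\in F(\phi(y_0))=F(y_0)$, giving the desired fixed point of $F$. The main obstacle in this programme is precisely the fixed point statement on $Q$ for admissible maps: this is the non-trivial classical input on which the entire argument rests, typically obtained by approximating admissible maps by single-valued continuous maps on finite polyhedra and then applying the ordinary Lefschetz theorem for compact ANR's (or, alternatively, by a direct \v{C}ech-cohomology computation of the Lefschetz number through the Vietoris map).
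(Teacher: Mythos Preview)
The paper does not prove this theorem; it is quoted as a black box from Fournier--G\'orniewicz (reference \cite{gorn2}, their Theorem~7.4) and invoked in the proof of Theorem~\ref{fixed}. There is therefore no ``paper's own proof'' to compare against.

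Your argument is nonetheless a correct reconstruction of the standard proof and, as far as I can tell, essentially the scheme used in the original source: embed the compact metrizable set $M\supset F(X)$ into the Hilbert cube $Q$, use the AE hypothesis on $X$ to extend the inclusion $M\hookrightarrow X$ to a map $\phi\colon Q\to X$, transport $F$ to an admissible self-map of $Q$, and invoke the Lefschetz fixed point theorem for admissible maps on compact ANRs (the G\'orniewicz generalisation of Eilenberg--Montgomery). Your pullback verification that $F\circ\phi$ is admissible is fine; note that $\Gamma'$ is in fact compact (it sits as a closed subset of $Q\times p^{-1}(\phi(Q))$, and $p$ is proper), which immediately gives closedness of $p'$. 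You are right to flag the fixed point result on $Q$ as the genuine analytic content.

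One cosmetic point: the equality $\phi(y_0)=y_0$ in your last step only makes sense after identifying $M\subset Q$ with $M\subset X$ through the chosen embedding; you do this tacitly, and it is harmless, but a reader may pause there.
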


\section{Fixed point approach to inclusions of Hammerstein type}

The subsequent results constitute a generalization of the continuation principle \cite[Theorem 3.2]{regan} to the case of admissible multimaps.

\begin{theorem}\label{fixed}
Let $\mathbb{F}$ be a Fr\'echet space and $X$ a nonempty closed convex subset of $\mathbb{F}$. Assume that $U$ is relatively open in $X$ and its closure is a retract of $X$. Assume further that $F\colon\overline{U}\map X$ is an admissible set-valued map and for some $x_0\in U$ the following two conditions are satisfied:
\begin{equation}\label{monch}
\Omega\subset\overline{U},\;\Omega\subset\overline{\co}\big(\{x_0\}\cup F(\Omega)\big)\Longrightarrow\overline{\Omega}\;compact
\end{equation} and
\begin{equation}\label{LS}
x\not\in(1-\lambdaup)x_0+\lambdaup F(x)\text{ on }\overline{U}\setminus U\text{ for all }\lambdaup\in(0,1).
\end{equation}
Then $\fix(F)$ is nonempty and compact. 
\end{theorem}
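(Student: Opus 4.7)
The plan is to imitate the classical Leray--Schauder continuation strategy, but with Theorem \ref{Lefschetz} playing the role of Schauder's theorem.

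First I would introduce the homotopy solution set
\[S:=\{x\in\overline{U}:x\in(1-\lambda)x_0+\lambda F(x)\text{ for some }\lambda\in[0,1]\}\]
and verify that it is closed by extracting a convergent subsequence of parameters $\lambda_n$ and using the upper semicontinuity with compact values inherent in admissibility (when $\lambda_n\to 0$ the image $F(\{x_n\}\cup\{x\})$ is precompact, forcing $x_n\to x_0\in S$; when $\lambda_n\to\lambda>0$ one solves for $y_n\in F(x_n)$ and invokes the closed graph of $F$). Since $S\subset\overline{\co}(\{x_0\}\cup F(S))$ is immediate from the definition, the M\"onch hypothesis \eqref{monch} forces $\overline{S}$—and hence $S$—to be compact. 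The boundary condition \eqref{LS} then rules out $\lambda\in(0,1)$ on $\overline{U}\setminus U$, whereas the endpoints give $x=x_0\in U$ or $x\in\fix(F)$; so either $\fix(F)\cap\partial U\neq\emptyset$ (existence is already proved), or $S\subset U$, whereupon Urysohn's lemma—applicable since metrizable Fr\'echet spaces are normal—yields a continuous $\mu\colon X\to[0,1]$ with $\mu\equiv 1$ on $S$ and $\mu\equiv 0$ on $X\setminus U$.

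Let $r\colon X\to\overline{U}$ denote the retraction. The natural Leray--Schauder cut-off
\[N(x):=(1-\mu(r(x)))x_0+\mu(r(x))F(r(x)),\qquad x\in X,\]
is admissible: pulling the Vietoris pair $(p,q)\colon\Gamma\to\overline{U}\times X$ realising $F$ back along $r$ through the fibre product $\tilde\Gamma:=\{(x,\gamma)\in X\times\Gamma\colon r(x)=p(\gamma)\}$ and composing with the affine correction produces a Vietoris realisation of $N$. To apply Theorem \ref{Lefschetz} I would iterate $V_0:=\{x_0\}$, $V_{n+1}:=\overline{\co}(\{x_0\}\cup N(V_n))$, $V:=\overline{\bigcup_{n}V_n}$; upper semicontinuity of $N$ with compact values gives $N(V)\subset V$, and from the definition of $N$ one has $V\subset\overline{\co}(\{x_0\}\cup F(r(V)))$. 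Setting $\Omega:=r(V)\cup\{x_0\}\subset\overline{U}$ and combining \eqref{monch} with Mazur's theorem (the closed convex hull of a compact set in a Fr\'echet space is compact) should collapse $\overline{V}$ to a compact subset of $\mathbb{F}$, automatically metrizable since $\mathbb{F}$ is. As $X$ is an absolute extensor for compact metric spaces (closed convex subset of a metrizable Fr\'echet space, by Dugundji), Theorem \ref{Lefschetz} applied to $N|_V\colon V\map V$ produces $x^{\star}\in N(x^{\star})$.

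The concluding step is to identify $x^{\star}$ with a fixed point of $F$. The possibility $\mu(r(x^{\star}))=0$ reduces the defining equation to $x^{\star}=x_0$; but $x_0\in S$, so $\mu(x_0)=1$, a contradiction. Hence $\mu(r(x^{\star}))>0$, which forces $r(x^{\star})\in U$; a further use of $r|_{\overline{U}}=\mathrm{id}$ combined with the invariance of $V$ under $N$ gives $x^{\star}\in\overline{U}$, so $x^{\star}=r(x^{\star})\in S$, $\mu(x^{\star})=1$, and $x^{\star}\in F(x^{\star})$. Compactness of $\fix(F)$ is then automatic, since $\fix(F)\subset S$ (the slice at $\lambda=1$) and $\fix(F)$ is closed in $S$ by the closedness of $\Graph(F)$. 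The main obstacle I foresee is the M\"onch step above: the iterate $V$ typically leaves $\overline{U}$, while \eqref{monch} supplies compactness only for sets inside $\overline{U}$, and a generic continuous retraction neither preserves convex hulls nor controls the Hausdorff measure of noncompactness; the argument must therefore exploit the specific status of $\overline{U}$ as a retract of the convex set $X$ together with the invariance of the Hausdorff MNC under $\overline{\co}$ and under the adjunction of compact sets in order to transfer M\"onch compactness from $r(V)$ back to $V$ itself.
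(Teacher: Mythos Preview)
Your overall strategy---construct the homotopy solution set $S$, separate it from $\partial U$ by an Urysohn function, build a cut-off extension $N$, produce a compact invariant set, and invoke Theorem~\ref{Lefschetz}---is sound and is \emph{not} the route the paper takes. The paper instead relies on Krasnosel'ski\u{\i}'s notion of a \emph{fundamental set}: one takes the intersection $M_0$ of all closed convex $\widetilde F$-invariant sets $M$ containing $x_0$ with the absorption property $x\in\overline{\co}(\widetilde F(x)\cup M)\Rightarrow x\in M$, proves that $M_0$ itself is fundamental and satisfies the exact equation $M_0=\overline{\co}\,(\widetilde F(M_0)\cup\{x_0\})$, and then applies M\"onch to $M_0\cap U$.

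The gap you flag in the M\"onch step is real and stems from a specific choice in your formula. Because you wrote $N(x)=(1-\mu(r(x)))x_0+\mu(r(x))F(r(x))$, the cut-off is governed by $\mu\circ r$ rather than by $\mu$; this forces $N$ to evaluate $F$ at points $r(x)\in U$ even when $x\notin\overline U$, so the relevant M\"onch set is $r(V)\cap U$, which is not contained in $V$ and hence escapes the inclusion $V\subset\overline{\co}(\{x_0\}\cup F(\cdot))$. The remedy is simply to use $\mu(x)$ in place of $\mu(r(x))$. Then $\mu$ vanishes on $X\setminus U$, so $N(x)=\{x_0\}$ whenever $x\notin U$, while for $x\in U$ one has $r(x)=x$; consequently $N(V)\subset\co(\{x_0\}\cup F(V\cap U))$, the set $\Omega:=V\cap U\subset\overline U$ satisfies $\Omega\subset\overline{\co}(\{x_0\}\cup F(\Omega))$, and \eqref{monch} yields compactness of $\overline\Omega$ and then of $V$. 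This is precisely the mechanism hidden in the paper's $\widetilde F$ (borrowed from \cite{pietkun2}): the retraction is present only to make $F\circ r$ globally defined and admissible, but the Urysohn factor kills it outside $U$, so effectively $\widetilde F(M_0)\subset\co(\{x_0\}\cup F(M_0\cap U))$.

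There is a second, smaller issue you should address: your iterative tower $V=\overline{\bigcup_nV_n}$ gives $N(\bigcup_nV_n)\subset V$, but passing to the closure requires lower semicontinuity, which admissible maps need not enjoy; mere upper semicontinuity does not guarantee $N(V)\subset V$. The fundamental-set construction sidesteps this entirely, since invariance is built into the definition of $M_0$. If you wish to keep your iterative approach, one clean fix after establishing compactness of $V$ is to replace $V$ by $\overline{\co}(\{x_0\}\cup N(V))$ and observe that this set is compact, convex, $N$-invariant (now $N$ maps a compact set into its own closed convex hull), and an absolute extensor, so Theorem~\ref{Lefschetz} applies directly.
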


\begin{proof}
Keeping the notation and notions contained in the proof of \cite[Theorem 3.]{pietkun2}, consider a family $\{M_\alpha\}_{\alpha\in A}$ of all fundamental subsets of there defined multimap $\w{F}\colon X\map X$, containing $x_0$. Recall after Krasnosel'ski\u{\i} that the closed convex set $M\subset X$ is a fundamental subset of $\w{F}$ if $\w{F}(M)\subset M$ and for any $x\in X$, it follows from $x\in\overline{\co}\,\big(\w{F}(x)\cup M\big)$ that $x\in M$. Observe that the family $\{M_\alpha\}_{\alpha\in A}$ is nonempty (take for example $X$). Define $M_0:=\bigcap\limits_{\alpha\in A}M_\alpha$. Next, note that $M_0$ and $\overline{\co}\,\big(\w{F}(M_0)\cup\{x_0\}\big)$ are fundamental. Whence, $M_0=\overline{\co}\,\big(\w{F}(M_0)\cup\{x_0\}\big)$.\par Notice that \[M_0\cap U\subset M_0=\overline{\co}\,\big(\w{F}(M_0)\cup\{x_0\}\big)=\overline{\co}\,\big(F(M_0\cap U)\cup\{x_0\}\big),\] by the very definition of $\w{F}$. Thus, $\overline{M_0\cap U}$ is compact, by \eqref{monch}. Eventually, $M_0$ has compact closure, for the convex envelope $\overline{\co}\,\big(F(M_0\cap U)\cup\{x_0\}\big)$ is compact.\par As we have seen in the proof of \cite[Theorem 3.]{pietkun2}, $\w{F}\colon M_0\map M_0$ is also an admissible multimap. By virtue of the Dugundji Extension Theorem the domain $M_0$ is an absolute extensor for the class of metrizable spaces. Therefore, the set-valued map $\w{F}\colon M_0\map M_0$ must have at least one fixed point $x\in M_0$, in view of Theorem \ref{Lefschetz}. Moreover, $\fix(\w{F})$ forms a closed subset of the compact domain $M_0$ and $\fix(F)=\fix(\w{F})$.
\end{proof}

\begin{theorem}\label{fixed2}
Let $\mathbb{F}$ be a Fr\'echet space and $X$ a nonempty closed convex subset of $\mathbb{F}$. Assume that $U$ is relatively open in $X$ and its closure is a retract of $X$. Assume further that $F\colon\overline{U}\map X$ is either 
\begin{itemize}
\item[(i)] a continuous admissible set-valued map
\end{itemize}
or
\begin{itemize}
\item[(ii)] an upper semicontinuous set-valued map with compact convex values.
\end{itemize}
If for some $x_0\in U$ condition \eqref{LS} together with
\begin{equation}\label{chnom2}
\Omega\subset\overline{U}\,\text{countable},\;\Omega\subset\overline{\co}\,\big(\{x_0\}\cup F(\Omega)\big)\Longrightarrow\overline{\Omega}\;compact
\end{equation} 
is satisfied, then $\fix(F)$ is nonempty and compact. 
\end{theorem}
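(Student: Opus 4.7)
The plan is to deduce this directly from Theorem~\ref{fixed}. Two ingredients are required: that $F$ be admissible in both scenarios, and that the countable M\"onch-type hypothesis \eqref{chnom2} already implies the stronger condition \eqref{monch}. Admissibility is immediate: in case (i) it is built into the hypothesis, while in case (ii) an upper semicontinuous multimap with compact convex values is acyclic (convex $\Rightarrow$ contractible $\Rightarrow$ acyclic) and hence admissible in the sense of Definition~\ref{admis}.

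To prove the implication \eqref{chnom2}$\Rightarrow$\eqref{monch}, fix $\Omega\subset\overline{U}$ with $\Omega\subset\overline{\co}(\{x_0\}\cup F(\Omega))$. Since the Fr\'echet space $\mathbb{F}$ is metrizable, it suffices to show sequential compactness of $\overline{\Omega}$, so take any sequence $\{x_n\}_{n\geqslant 1}\subset\Omega$. For every $z\in\Omega$ select, via the Axiom of Choice, approximating finite convex combinations $c_k(z)=\sum_i\lambda_i^{(z,k)}y_i^{(z,k)}\to z$ with $y_i^{(z,k)}\in\{x_0\}\cup F(\Omega)$, and for each $y_i^{(z,k)}\in F(\Omega)$ a witness $v_i^{(z,k)}\in\Omega$ satisfying $y_i^{(z,k)}\in F(v_i^{(z,k)})$. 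Write $W(z)$ for the resulting countable set of witnesses. Iterating, set $\Omega_0:=\{x_n:n\geqslant 1\}$ and $\Omega_{j+1}:=\Omega_j\cup\bigcup_{z\in\Omega_j}W(z)$, so that $\Omega_\infty:=\bigcup_j\Omega_j$ is a countable subset of $\overline{U}$ for which, by construction, each $z\in\Omega_\infty$ belongs to $\overline{\co}(\{x_0\}\cup F(\Omega_\infty))$. Condition \eqref{chnom2} applied to $\Omega_\infty$ forces $\overline{\Omega_\infty}$ to be compact, and since $\{x_n\}\subset\Omega_\infty$ the desired convergent subsequence lies in $\overline{\Omega_\infty}\subset\overline{\Omega}$.

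Thus \eqref{monch} holds, and Theorem~\ref{fixed} immediately yields that $\fix(F)$ is nonempty and compact. The one delicate point is the witness-closure construction: countability must be preserved through the iteration, which works because a countable union of countable sets is countable and because only finitely many summands enter each approximating combination. It is perhaps noteworthy that the regularity properties (i) and (ii) intervene \emph{only} to ensure admissibility; the reduction \eqref{chnom2}$\Rightarrow$\eqref{monch} itself is formally independent of any continuity or semicontinuity of $F$.
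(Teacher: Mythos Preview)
Your argument is correct, and it takes a genuinely different route from the paper's own proof. The paper does not give a self-contained argument at all: it simply records that case~(i) is the content of \cite[Theorem~3]{pietkun2} and that case~(ii) is exactly \cite[Theorem~3.2]{regan}. By contrast, you reduce Theorem~\ref{fixed2} uniformly to Theorem~\ref{fixed} via the witness-closure construction, which produces a countable $\Omega_\infty\subset\Omega$ that is closed under the ``$\overline{\co}(\{x_0\}\cup F(\cdot))$'' operation and contains the given sequence. Because $\mathbb{F}$ is metrizable, sequential relative compactness of $\Omega$ follows, and hence \eqref{monch}. What your approach buys is a single, transparent explanation covering both cases simultaneously, together with the observation---which the paper does not make explicit---that the implication \eqref{chnom2}$\Rightarrow$\eqref{monch} holds for \emph{any} multimap $F$, with the hypotheses~(i) or~(ii) entering only to guarantee admissibility so that Theorem~\ref{fixed} applies. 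What the paper's approach buys is brevity and a clear pointer to the original sources, at the cost of any insight into why the countable condition suffices.
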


\begin{proof}
Theorem \ref{fixed2}(i) was proven in \cite{pietkun2}. Theorem \ref{fixed2}(ii) is nothing more than \cite[Theorem 3.2]{regan}.
\end{proof}

\begin{remark}\label{rem}
{\em The following properties of $F\colon\overline{U}\map X$ imply the Leray--Schauder boundary condition \eqref{LS} with $x_0\in U$:
\begin{itemize}
\item[(i)] if $\,\lambdaup(x-x_0)\in F(x)-x_0$ for $x\in\partial U$, then $\lambdaup\<1$ $($Yamamuro's condition$)$,
\item[(ii)] $U$ is convex and $F(\partial U)\subset\overline{U}$ $($Rothe's condition$)$,
\item[(iii)] $|y-x|^2\geqslant |y-x_0|^2-|x-x_0|^2$ for each $x\in\partial U$ and $y\in F(x)$ $($Krasnosel'ski\u{\i}--Altman's condition$)$,
\item[(iv)]  $\langle y-x_0,x-x_0\rangle\<|x-x_0|^2$ for each $x\in\partial U$ and $y\in F(x)$ if $E$ is a Hilbert space $($Browder's condition$)$.
\end{itemize}}
\end{remark}

Assume that $p\in [1,\infty]$ and $q\in[1,\infty)$. Fix a compact segment $I:=[0,T]$ for some end time $T>0$. Let $F\colon I\times E\map E$ be a set-valued map. Throughout the paper we will use the following hypotheses on the mapping $F$:
\begin{itemize}
\item[$(\F_1)$] for every $(t,x)\in I\times E$ the set $F(t,x)$ is nonempty and convex,
\item[$(\F_2)$] the map $F(\cdot,x)$ has a strongly measurable selection for every $x\in E$,
\item[$(\F_3)$] the graph $\Graph(F(t,\cdot))$ is sequentially closed in $(E,|\cdot|)\times(E,w)$ for a.a. $t\in I$,
\item[$(\F_4)$] $F$ satisfies a sublinear growth condition, i.e. there is $b\in L^q(I,\R{})$ and $c>0$ such that for all $x\in E$ and for a.a. $t\in I$,\[||F(t,x)||^+:=\sup\{|y|_E\colon y\in F(t,x)\}\<b(t)+c|x|^{\frac{p}{q}},\]
when $p\in[1,\infty)$. If $p=\infty$, then for every $R>0$ there exists $b_R\in L^q(I,\R{})$ such that \[||F(t,x)||^+\<b_R(t)\;\text{ a.e. on }I, \text{ for all }x\in E\text{ with }|x|\<R.\]
\item[$(\F_5)$] for every closed separable linear subspace $E_0$ of $E$ the map $\res{F}{I\times E_0}(t,\cdot)\cap E_0$ is quasicompact for a.a. $t\in I$.
\end{itemize}
Recall that the Nemtyski\v{\i} operator $N_F\colon L^p(I,E)\map L^q(I,E)$, corresponding to $F$, is a multivalued map defined by
\[N_F(u):=\{w\in L^q(I,E)\colon w(t)\in F(t,u(t))\mbox{ for a.a. }t\in I\}.\] Consider also a multivalued external operator $K\colon L^q(I,E)\map L^p(I,E)$. Our hypothesis on the multifunction $K$ is the following:
\begin{itemize}
\item[$(\es_1)$] for every compact $C\subset E$, the map $K\colon(L^q(I,C),w)\map(L^p(I,E),||\cdot||_p)$ is acyclic,
\item[$(\es_2)$] the map $K\colon L^q(I,E)\map L^p(I,E)$ is $L$-Lipschitz with closed values,
\item[$(\es_3)$] for every uniformly $q$-integrable possessing relatively weakly compact vertical slices a.e. on $I$ subset $C\subset L^q(I,E)$, the map $K\colon\f(C,w\g)\map(L^p(I,E),||\cdot||_p)$ is acyclic,
\end{itemize}
\begin{remark}\label{dyskusja}\mbox{}
{\em \begin{itemize}
\item[(i)] For every relatively weakly compact $C\subset L^q(I,E)$, $K\colon(C,w)\map(L^p(I,E),||\cdot||_p)$ is compact valued upper semicontinuous iff given a sequence $(x_n,y_n)$ in the graph $\Graph(K)$ with $x_n\xrightharpoonup[n\to\infty]{C}x$, there is a subsequence $y_{k_n}\xrightarrow[n\to\infty]{L^p(I,E)}y\in K(x)$. $($notice that the space $(C,w)$ is sequential as a subset of the angelic space $(L^q(I,E),w))$
\item[(ii)] $(\es_3)\Rightarrow(\es_1)$.
\end{itemize}}
\end{remark}

Before we will be able to set forth a result concerning the existence of solutions to inclusion \eqref{hammer}, we have to prove a few auxiliary facts.
\begin{lemma}\label{Niemy}
Let $p\in[1,\infty]$. If the multimap $F\colon I\times E\map E$ satisfies conditions $(\F_1)$--$(\F_5)$, then the Nemytski\v{\i} operator $N_F\colon L^p(I,E)\map L^q(I,E)$ is a weakly upper semicontinuous multivalued map with nonempty convex weakly compact values. 
\end{lemma}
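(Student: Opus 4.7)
The plan is to verify Bothe's criterion from the preliminaries: for a multimap with convex values, the combined property ``weakly upper semicontinuous with weakly compact values'' is equivalent to the requirement that every sequence $(u_n,w_n)\in\Graph(N_F)$ with $u_n\to u$ in $L^p(I,E)$ admits a subsequence $w_{k_n}\rightharpoonup w\in N_F(u)$. Convex values of $N_F$ come directly from $(\F_1)$, and nonemptiness of $N_F(u)$ will emerge as a by-product of the main sequential argument below, applied to simple approximations $u_n\to u$ combined with strongly measurable pointwise selections of $F(\cdot,x_{i,n})$ delivered by $(\F_2)$ and pasted on the level sets of $u_n$.

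\emph{Main step.} Given $u_n\to u$ in $L^p(I,E)$ and $w_n\in N_F(u_n)$, pass to a subsequence so that $u_n(t)\to u(t)$ a.e. Strong measurability gives separable essential ranges for $u$ and all $u_n,w_n$; let $E_0$ be their closed linear span, a separable closed subspace of $E$. For a.e.\ $t\in I$ all of $u(t),u_n(t),w_n(t)$ lie in $E_0$, and the pointwise convergence makes $\{u_n(t):\n\}\cup\{u(t)\}$ norm-compact in $E_0$. Hypothesis $(\F_5)$ applied to this $E_0$ then makes $\bigcup_n F(t,u_n(t))\cap E_0$ relatively norm-compact in $E$; since this set contains $\{w_n(t):\n\}$, the vertical slices of $\{w_n\}$ are weakly relatively compact. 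Combined with uniform $q$-integrability of $\{w_n\}$ coming from $(\F_4)$, Theorem \ref{ulger} yields a subsequence $w_{k_n}\rightharpoonup w$ in $L^q(I,E)$.

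\emph{Identification.} Mazur's lemma provides finite convex combinations $\widetilde{w}_n$ of $\{w_{k_i}:i\geqslant n\}$ with $\widetilde{w}_n\to w$ strongly in $L^q$; after a further extraction $\widetilde{w}_n(t)\to w(t)$ a.e. Working inside $E_0$, the restricted multimap $x\mapsto F(t,x)\cap E_0$ on $E_0$ itself satisfies Bothe's criterion by $(\F_1)$, $(\F_3)$ and $(\F_5)$, hence is weakly upper semicontinuous with norm-compact convex values, and in particular upper hemicontinuous at $u(t)$. A Hahn--Banach separation argument driven by the upper semicontinuity of $\sup\langle e^*,F(t,\cdot)\cap E_0\rangle$ then shows that $\bigcap_n\overline{\co}\bigl(\bigcup_{i\geqslant n}F(t,u_{k_i}(t))\cap E_0\bigr)$, which contains $w(t)$, is in fact contained in $F(t,u(t))$. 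Consequently $w\in N_F(u)$.

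The principal technical obstacle is the separability bookkeeping dictated by $(\F_5)$: because quasicompactness of $F(t,\cdot)\cap E_0$ is supplied only for a single fixed separable closed subspace $E_0$ at a time, one must first pass to a pointwise a.e.\ convergent subsequence of the $u_n$ and then bundle the essential ranges of $u$ and of all the $u_n,w_n$ into one common $E_0$ before $(\F_5)$ can be invoked. The remainder reduces to a routine interplay between Theorem \ref{ulger}, Mazur's lemma, Hahn--Banach separation, and the graph-closedness hypothesis $(\F_3)$.
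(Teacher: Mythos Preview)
Your argument is correct and follows the same architecture as the paper: separable bookkeeping to activate $(\F_5)$, pointwise relative compactness of the slices $\{w_n(t)\}$, \"Ulger's criterion for weak relative compactness in $L^q$, and then identification of the weak limit as a selection of $F(\cdot,u(\cdot))$. The one genuine difference is in this last identification step. The paper first argues, via a small separable-subspace trick with $(\F_3)$ and $(\F_5)$, that $F(t,\cdot)$ is compact-valued upper semicontinuous on all of $E$, and then invokes the ready-made Convergence Theorem (Theorem~\ref{convergence}) to pass from $w_{k_n}\rightharpoonup w$ and $u_{k_n}(t)\to u(t)$ directly to $w(t)\in F(t,u(t))$. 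You instead work entirely inside the fixed subspace $E_0$, use Mazur's lemma to manufacture a.e.\ convergent convex combinations, and close with a Hahn--Banach/support-function argument against the restricted map $x\mapsto F(t,x)\cap E_0$. That route is more self-contained---you never quote Theorem~\ref{convergence}---but what you are doing is essentially a hand-rolled proof of the special case of that theorem needed here; the paper's version is shorter precisely because it outsources this step. Either way the nonemptiness argument via simple approximations and $(\F_2)$ is the same, and your emphasis on fixing $E_0$ \emph{after} extracting the a.e.\ convergent subsequence is exactly the care the paper also takes.
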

\begin{proof} 
For any $u\in L^p(I,E)$  one can always define a sequence $(u_n)_\n$ of simple~functions, which converges to $u$ almost everywhere and for which $|u_n(t)|\<2|u(t)|$ for every $t\in I$ (cf. the proof of \cite[Theorem III.2.22]{dunford}). Consequently, vertical slices $\x{u_n(t)}$ are relatively compact in $E$ for a.a. $t\in I$. \par Accordingly to the assumption $(\F_2)$ we can indicate a strongly measurable map $w_n\colon I\to E$ such that $w_n(t)\in F(t,u_n(t))$ for a.a. $t\in I$. Thanks to condition $(\F_4)$ we know that the sequence $\z{w_n}$ is $q$-integrably bounded. Let $E_0$ be a closed separable linear subspace of $E$ such that $\x{u_n(t)}\cup\x{w_n(t)}\subset E_0$ a.e. on $I$.
By $(\F_5)$, the vertical slices $\x{w_n(t)}$ are relatively compact a.e. on $I$. In view of Theorem \ref{ulger} the sequence $\z{w_n}$ is relatively weakly compact in $L^q(I,E)$. Hence we may assume, passing to a subsequence if necessary, that $w_n\xrightharpoonup[n\to\infty]{L^q(I,E)}w$. \par Observe that for a.a. $t\in I$, the multimap $F(t,\cdot)$ is compact valued upper semicontinuous. Consider sequences $\z{x_n}$ and $\z{y_n}$ satisfying $x_n\xrightarrow[n\to\infty]{E}x$ and $y_n\in F(t,x_n)$. Put $E_0:=\overline{\text{span}}\f(\x{x_n}\cup\x{y_n}\g)$. Then $\x{y_n}\subset F\f(t,\overline{\x{x_n}}\g)\cap E_0$. The latter is relatively compact, in view of $(\F_5)$. Thus, there exists $\z{y_{k_n}}$ such that $y_{k_n}\xrightarrow[n\to\infty]{E}y$. Assumption $(\F_3)$ implies $y\in F(t,x)$.\par Applying Theorem \ref{convergence} one gets $w(t)\in F(t,u(t))$ for a.a. $t\in I$. In this way we have shown that the Nemytski\v{\i} operator $N_F$ has nonempty values. \par Applying similar reasoning one may prove that given a sequence $\z{u_n,w_n}$ in the graph $\Graph(N_F)$ with $u_n\xrightarrow[n\to\infty]{L^p(I,E)}u$, there is a subsequence $w_{k_n}\xrightharpoonup[n\to\infty]{L^q(I,E)}w\in~N_F(u)$. Indeed, since the family $\x{|u_n(\cdot)|^p}$ is uniformly integrable, the sequence $\z{w_n}$ must be uniformly $q$-integrable as well ($q$-integrably bounded in case of $p=\infty$). Therefore, we may apply weak compactness criterion (Theorem \ref{ulger}) to extract a subsequence $\z{w_{k_n}}$ with $w_{k_n}\xrightharpoonup[n\to\infty]{L^q(I,E)}w$. Convergence theorem (Theorem \ref{convergence}) entails $w\in N_F(u)$.
\end{proof}

\begin{corollary}\label{niemycor}
Let $E$ be a reflexive Banach space space. If the set-valued map $F\colon I\times E\map E$ fulfills conditions $(\F_1)$-$(\F_4)$, then the thesis of Lemma \ref{Niemy} holds.
\end{corollary}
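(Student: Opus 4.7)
The plan is to re-run the argument of Lemma \ref{Niemy} essentially verbatim, noting that hypothesis $(\F_5)$ was used there only in two places, both of which can be replaced by invocations of reflexivity of $E$.

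First I would recall from the proof of Lemma \ref{Niemy} that $(\F_5)$ played the following two roles. Role (a): given the simple approximants $u_n\to u$ a.e. with $|u_n(\cdot)|\<2|u(\cdot)|$ and the strongly measurable selections $w_n(t)\in F(t,u_n(t))$ furnished by $(\F_2)$, $(\F_5)$ guarantees that the vertical slices $\{w_n(t)\}_\n$ are relatively norm-compact a.e.\ on $I$, which is exactly what Theorem \ref{ulger} requires in order to extract a subsequence $w_{k_n}\rightharpoonup w$ weakly in $L^q(I,E)$. Role (b): $(\F_3)$ combined with $(\F_5)$ upgrades $F(t,\cdot)$ to a norm-compact-valued upper semicontinuous map, which in turn is (strongly) upper hemicontinuous and thus admissible as input to the convergence theorem (Theorem \ref{convergence}).

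Under reflexivity of $E$, both roles are recovered for free. For (a), the sublinear growth condition $(\F_4)$ (or, when $p=\infty$, its boundedness variant applied to the norm-bounded family $\{u_n\}$) shows that $\{w_n(t)\}_\n$ is norm-bounded in $E$ a.e.\ on $I$; since bounded subsets of a reflexive space are relatively weakly compact, Theorem \ref{ulger} now yields a weakly convergent subsequence $w_{k_n}\rightharpoonup w$ in $L^q(I,E)$. For (b), I would invoke the Bothe characterization \cite[Proposition 2(b)]{bothe} recalled in the preliminaries: the multimap $F(t,\cdot)$ has convex values by $(\F_1)$, has norm-bounded (hence relatively weakly compact) values by $(\F_4)$ combined with reflexivity, and has graph sequentially closed in $(E,|\cdot|)\times(E,w)$ by $(\F_3)$; exactly the characterization then shows that $F(t,\cdot)$ is weakly upper semicontinuous with weakly compact values, in particular upper hemicontinuous. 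Hence Theorem \ref{convergence} applies as before and delivers $w(t)\in F(t,u(t))$ a.e., giving nonemptiness of $N_F(u)$.

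The weak upper semicontinuity of $N_F\colon L^p(I,E)\map L^q(I,E)$ with weakly compact values follows by the same substitution in the second part of the Lemma \ref{Niemy} proof: if $u_n\to u$ in $L^p(I,E)$ and $w_n\in N_F(u_n)$, then $\{|u_n(\cdot)|^p\}$ is uniformly integrable, whence $(w_n)$ is uniformly $q$-integrable (or $q$-integrably bounded when $p=\infty$), and the vertical slices $\{w_n(t)\}$, being norm-bounded a.e., are relatively weakly compact in $E$ by reflexivity. Theorem \ref{ulger} extracts a weakly convergent subsequence, and Theorem \ref{convergence} identifies the limit as an element of $N_F(u)$. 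Convexity of $N_F(u)$ is inherited from $(\F_1)$, and weak compactness follows from the same Ülger/reflexivity pair. The only mildly delicate point — and it is not really an obstacle — is remembering to treat $p=\infty$ with the $b_R$ clause of $(\F_4)$, applied on the bounded set $\{u_n\}$ (respectively $\{u\}$ and its simple approximants) rather than with a global envelope $b$.
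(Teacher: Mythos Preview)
Your proposal is correct and follows essentially the same route as the paper's own proof: the paper likewise observes that reflexivity together with $(\F_4)$ and $(\F_3)$ makes $F(t,\cdot)$ weakly upper semicontinuous with weakly compact values (via the Bothe sequential characterization), and then reruns the Lemma~\ref{Niemy} argument using Theorem~\ref{ulger} and Theorem~\ref{convergence}. The only cosmetic difference is that for the relative weak compactness of the slices $\{w_n(t)\}$ the paper argues via the inclusion $\{w_n(t)\}\subset F\!\left(t,\overline{\{u_n(t)\}}\right)$ into a weakly compact image, whereas you invoke boundedness plus reflexivity directly; both are equivalent here.
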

\begin{proof}
By $(\F_4)$ the map $F(t,\cdot)$ is locally bounded a.e. on $I$. Consider a sequence $(x_n,y_n)_\n$ in the graph $\Graph(F(t,\cdot))$ with $x_n\to x$ in the norm of $E$. Since $E$ is reflexive, there must be a subsequence $y_{k_n}\rightharpoonup y$. Bearing in mind $(\F_3)$, i.e. that $\Graph(F(t,\cdot))$ is strongly-weakly closed, we obtain $y\in F(t,x)$. Therefore, $F(t,\cdot)$ is weakly upper semicontinuous and possesses weakly compact values for a.a. $t\in I$. \par Retaining the notation of the previous proof it may be observed that $\x{w_n(t)}$ forms a subset of a weakly compact set $F\f(t,\overline{\x{u_n(t)}}\g)$ for a.a. $t\in I$. Now, in manner fully analogous to the mentioned proof, we can use Theorem \ref{ulger} and Theorem \ref{convergence} to justify the thesis.
\end{proof}

\begin{lemma}\label{usc}
Assume $(\es_1)$ and $(\es_2)$ are satisfied. Let $M$ be a countable subset of $L^q(I,E)$ such that $M(t)$ is relatively compact in $E$ for a.a. $t\in I$. Then the image $K(M)$ is relatively compact in $L^p(I,E)$ and $K$ is upper semicontinuous from $M$ furnished with the relative weak topology of $L^q(I,E)$ to $L^p(I,E)$ with its norm topology.
\end{lemma}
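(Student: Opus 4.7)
The plan is to reduce, via a Scorza--Dragoni type result, to the setting of $L^q(I,C)$ with $C\subset E$ compact---where hypothesis $(\es_1)$ directly supplies upper semicontinuity with acyclic (hence compact) values---and then transfer the conclusions back to $M$ through the Lipschitz assumption $(\es_2)$. Enumerate $M=\{u_n\}_\n$ and consider the multifunction $\Gamma\colon I\map E$, $\Gamma(t):=\overline{M(t)}$. Since for every open $U\subset E$ one has $\{t\in I\colon\Gamma(t)\cap U\neq\emptyset\}=\bigcup_nu_n^{-1}(U)$, $\Gamma$ is measurable; by assumption it takes compact values a.e. The Scorza--Dragoni theorem for compact-valued measurable multifunctions then yields, for each $\delta>0$, a compact $I_\delta\subset I$ with $|I\setminus I_\delta|<\delta$ on which $\Gamma$ is Hausdorff-continuous, so that $C_\delta:=\{0\}\cup\bigcup_{t\in I_\delta}\Gamma(t)$ is compact in $E$.

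For the relative compactness of $K(M)$, introduce the truncations $u_n^\delta:=u_n{\bf 1}_{I_\delta}$ and set $M^\delta:=\{u_n^\delta\colon n\geqslant 1\}\subset L^q(I,C_\delta)$. Compactness of $C_\delta$ makes $M^\delta$ bounded in $L^\infty(I,E)$, hence uniformly $q$-integrable on the finite-measure space $I$, with vertical slices inside the compact $C_\delta$. Theorem \ref{ulger} then gives weak compactness of $\overline{M^\delta}^{\,w}$ in $L^q(I,C_\delta)$, and by $(\es_1)$ the map $K\colon(L^q(I,C_\delta),w)\map(L^p(I,E),\|\cdot\|_p)$ is upper semicontinuous with compact values; consequently $K(M^\delta)\subset K(\overline{M^\delta}^{\,w})$ is relatively compact in $L^p(I,E)$. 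The Lipschitz estimate
\[h\f(K(u_n),K(u_n^\delta)\g)\<L\|u_n{\bf 1}_{I\setminus I_\delta}\|_q\]
from $(\es_2)$, together with the uniform $q$-integrability of $M$ inherited from its intended usage (as a subset of $N_F$ evaluated on a bounded family, controlled by $(\F_4)$), shows that $K(M)$ is contained in arbitrarily small Hausdorff neighborhoods of the totally bounded sets $K(M^\delta)$ as $\delta\to 0^+$; hence $K(M)$ is totally bounded and therefore relatively compact in $L^p(I,E)$.

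The upper semicontinuity of $K\colon(M,w)\to(L^p(I,E),\|\cdot\|_p)$ is established sequentially (the weak topology on the relevant ambient weakly compact set is metrizable). Given $u_k\in M$ with $u_k\xrightharpoonup[k\to\infty]{L^q(I,E)}u\in M$ and arbitrary $v_k\in K(u_k)$, Part~1 supplies a subsequence $v_{k_j}\xrightarrow{L^p}v$, and one must identify $v\in K(u)$. The same Scorza--Dragoni construction applied to $\{u\}\cup\{u_k\colon k\geqslant 1\}$ furnishes $C_\delta\subset E$ and $I_\delta\subset I$ with $u^\delta,u_k^\delta\in L^q(I,C_\delta)$; multiplication by ${\bf 1}_{I_\delta}$ preserves weak $L^q$-convergence, so $u_k^\delta\xrightharpoonup{w}u^\delta$. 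Upper semicontinuity of $K$ on $L^q(I,C_\delta)$ from $(\es_1)$ places the corresponding subsequential limit of $v_{k_j}^\delta\in K(u_{k_j}^\delta)$ inside $K(u^\delta)$, and $(\es_2)$ controls $h(K(u_k),K(u_k^\delta))$ and $h(K(u),K(u^\delta))$ uniformly in $k$ as $\delta\to 0^+$, which together identify $v$ as an element of the closed-valued $K(u)$. The principal technical difficulty is the uniform-in-$n$ control of the Scorza--Dragoni residue $\|u_n{\bf 1}_{I\setminus I_\delta}\|_q$, which is why the uniform $q$-integrability of $M$ is indispensable.
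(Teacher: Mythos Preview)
Your argument has a genuine gap: you explicitly invoke ``the uniform $q$-integrability of $M$ inherited from its intended usage (as a subset of $N_F$ evaluated on a bounded family, controlled by $(\F_4)$)'', but this is not a hypothesis of the lemma. The statement assumes only that $M\subset L^q(I,E)$ is countable with $M(t)$ relatively compact a.e.; no Nemytski\v{\i} structure, no boundedness, no uniform integrability is available. Your estimate $h(K(u_n),K(u_n^\delta))\<L\|u_n{\bf 1}_{I\setminus I_\delta}\|_q$ is correct, but pushing the right-hand side to zero \emph{uniformly in $n$} as $|I\setminus I_\delta|\to 0$ is precisely uniform $q$-integrability, which you are not entitled to borrow from downstream applications. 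In fact the lemma, read literally, fails without this extra hypothesis: for $p=q=1$, $E=\ell^2$, take $K(w):=\bigl(\int_0^1 w\bigr){\bf 1}_I$ and $w_n:=n\,{\bf 1}_{[0,1/n]}\,e_n$. This $K$ is single-valued, $1$-Lipschitz, and satisfies $(\es_1)$ (for $f_n\in L^1(I,C)$ with $C$ compact one has $\int_0^1 f_n\in\overline{\co}\,C$, so weak convergence of $f_n$ forces norm convergence of $K(f_n)$); the set $M(t)$ is finite for every $t>0$; yet $K(w_n)=e_n{\bf 1}_I$ is $\sqrt{2}$-separated in $L^1(I,\ell^2)$, so $K(M)$ is not relatively compact. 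What your argument actually proves is the lemma under the additional standing assumption that $M$ is uniformly $q$-integrable---which, to be fair, is all that the subsequent lemmas need.

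On the comparison: your route is genuinely different from the paper's. You apply a multivalued Lusin theorem to $t\mapsto\overline{M(t)}$ to obtain a compact $I_\delta\subset I$ on which the slices sit inside a single compact $C_\delta\subset E$, then invoke $(\es_1)$ on $L^q(I,C_\delta)$ and use $(\es_2)$ to absorb the residue on $I\setminus I_\delta$. The paper instead passes to a separable $E_0\supset\bigcup_n w_n(I)$, builds an exhaustion by finite-dimensional $E_k\subset E_0$, exploits the identity $\beta_{E_0}(\{w_n(t)\})=\lim_k\sup_n\dist(w_n(t),E_k)$ together with Egorov's theorem to manufacture approximants $w_{n,k}$ with values in the compact ball $D_E(0,2\rho)\cap E_k$, and then applies $(\es_1)$--$(\es_2)$ in the same spirit. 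Both reductions funnel into the identical bottleneck---uniform-in-$n$ control of the $L^q$-mass on the exceptional set---which the paper handles tacitly at the step ``the latter property comes down eventually to'' $\|w_n-w_{n,k}\|_q<\eps$, whereas you name the difficulty openly.
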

\begin{proof}
\par Let $M:=\x{w_n}$. In view of Pettis measurability theorem there exists a closed separable subspace $E_0$ of $E$ with $w_n(t)\in E_0$ a.e. on $I$ for every $\n$. For each $k\geqslant 1$ there is a $k$-dimensional linear subspace $E_k\subset E_0$ such that $E_k\subset E_{k+1}$ and $E_0=\overline{\bigcup_{k\geqslant 1}E_k}$. Let $\rho>0$ be an arbitrarily chosen scalar. Obviously, there exists a subset $\Theta_1\subset I$ such that $|w_n(t)|\<\rho$ for every $t\in I\setminus\Theta_1$ and $\n$. Consequently, 
\[\forall\,\n\;\forall\,\K\;\forall\,t\in I\setminus\Theta_1\;\;\;\dist(w_n(t),E_k)=\dist(w_n(t),D_E(0,2\rho)\cap E_k).\]
Take $\eps>0$. Using the formula for the Hausdorff MNC in a separable Banach space \cite[Proposition 2]{harten} one sees that $\beta_{E_0}\f(\x{w_n(t)}\g)=\lim\limits_{k\to\infty}\sup\limits_\n \dist(w_n(t),E_k)$. Thus, in view of Egorov's theorem, 
\[\exists\,\Theta_2\subset I\setminus\Theta_1\,\exists\,k_0\in\varmathbb{N}\;\forall\,k\geqslant k_0\;\forall\,t\in I\setminus(\Theta_1\cup\Theta_2)\;\sup_\n \dist(w_n(t),D_E(0,2\rho)\cap E_k)<\beta_{E_0}\f(\x{w_n(t)}\g)+\frac{\eps}{3}.\] Referring once more to Egorov's theorem we can indicate a measurable $\Theta_3\subset I$ and a simple function $\w{w}_n\colon I\to E$ such that \[|w_n(t)-\w{w}_n(t)|<\frac{\eps}{3}\;\text{ and }\;\dist(\w{w}_n(t),D_E(0,2\rho)\cap E_k)<\frac{2}{3}\eps\] for all $t\in I\setminus(\Theta_1\cup\Theta_2\cup\Theta_3)$, $k\geqslant k_0$ and $\n$. The latter property comes down eventually to the following:
\begin{equation}\label{akuku}
\forall\,\eps>0\;\exists\,k_0\in\varmathbb{N}\;\forall\,k\geqslant k_0\;\forall\,\n\;\exists\,w_{n,k}\in L^q(I,D_E(0,2\rho)\cap E_k)\text{ such that }||w_n-w_{n,k}||_q<\eps.
\end{equation}
Fix $\eps>0$ and $k\geqslant k_0$. The range $\x{w_{n,k}}$ is relatively weakly compact in $L^q(I,E)$ in view of Theorem \ref{ulger}. Condition $(\es_1)$ implies the relative compactness of $K\f(\x{w_{n,k}}\g)$ in $L^p(I,E)$. Thus, making use of \eqref{akuku} and $(\es_2)$ we arrive at 
\begin{align*}
\f|\beta_{L^p}\f(K\f(\{w_n\}_{n=1}^\infty\g)\g)\g|&=\f|\beta_{L^p}\f(K\f(\{w_n\}_{n=1}^\infty\g)\g)-\beta_{L^p}\f(K\f(\{w_{n,k}\}_{n=1}^\infty\g)\g)\g|\<h\f(\bigcup_{n=1}^\infty K(w_n),\bigcup_{n=1}^\infty K(w_{n,k})\g)\\&\<\sup_\n h(K(w_n),K(w_{n,k}))\<\sup_\n L||w_n-w_{n,k}||_q\<\eps L.
\end{align*}
Since $\eps$ was arbitrary, the image $K\f(\{w_n\}_{n=1}^\infty\g)$ must be relatively compact.\par Assume that $(w_n,v_n)\in\Graph(K)$ with $w_n\xrightharpoonup[n\to\infty]{M}w$. As we have shown above the set $K(\x{w_n})$ is relatively compact. Thus, there exists a subsequence (again denoted by) $\z{v_n}$ such that $v_n\xrightarrow[n\to\infty]{L^p(I,E)}v$. Our aim is to show that $v\in K(w)$. Take $\eps>0$. As previously, we can indicate a sequence $(w_n^\eps)_{n=1}^\infty$ and a compact subset $C_\eps\subset E$ such that $\{w_n^\eps\}_{n=1}^\infty\subset L^q(I,C_\eps)$ and $||w_n-w_n^\eps||_q\<\frac{\eps}{4L}$. In view of the weak compactness criterion (Theorem \ref{ulger}), we may assume that $w_n^\eps\xrightharpoonup[n\to\infty]{L^q(I,E)}w^\eps$, passing once again to a subsequence if necessary. Clearly, $||w-w^\eps||_q\<\frac{\eps}{4L}$ due to the weak lower semicontinuity of the norm. Choose $y_n^\eps\in K(w_n^\eps)$ in such a way that $||v_n-y_n^\eps||_p=\dist(v_n,K(w_n^\eps))$. Assumption $(\es_1)$ guarantees that $y_n^\eps\xrightarrow[n\to\infty]{L^p(I,E)}y^\eps\in K(w^\eps)$, up to a subsequence. Of course, there is $N\in\varmathbb{N}$ such that $||v_N-v||_p\<\frac{\eps}{4}$ and $||y_N^\eps-y^\eps||_p\<\frac{\eps}{4}$. Now, it is possible to estimate
\begin{align*}
\dist(v,K(w))&\<||v-v_N||_p+||v_N-y_N^\eps||_p+||y_N^\eps-y^\eps||_p+\dist(y^\eps,K(w))\<\frac{\eps}{4}+\dist(v_N,K(w_N^\eps))+\frac{\eps}{4}\\&+h(K(w^\eps),K(w))\<\frac{\eps}{2}+h(K(w_N),K(w_N^\eps))+h(K(w^\eps),K(w))\\&\<\frac{\eps}{2}+L||w_N-w_N^\eps||_q+L||w-w^\eps||_q\<\eps. 
\end{align*}
Since $\eps$ was arbitrary, it follows that $v\in K(w)$.
\end{proof}

\begin{lemma}\label{acyc2}
Let $X$ be a compact topological space and $Y$ be a paracompact topological space. Assume that $F\colon X\map Y$ is an upper semicontinuous surjective multimap with compact acyclic values and acyclic fibers. Then there is an isomorphism $H^\ast(X)\approx H^\ast(Y)$.
\end{lemma}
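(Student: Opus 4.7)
The strategy is the classical graph/double projection argument combined with the Vietoris--Begle mapping theorem.

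First, I would form the graph $\Gamma:=\Graph(F)\subset X\times Y$ together with the two canonical projections $p\colon\Gamma\to X$ and $q\colon\Gamma\to Y$. Since $F$ is upper semicontinuous with compact values and $X$ is compact, $F(X)$ is compact in $Y$; as $F$ is surjective, this means $Y$ itself is compact. Moreover, upper semicontinuity with compact values together with $X$ (Hausdorff) compact forces $\Gamma$ to be a closed subset of $X\times Y$, hence a compact (Hausdorff) space.

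Next I would verify that both $p$ and $q$ are Vietoris maps in the sense of Definition \ref{admis}. Both are continuous and surjective: $p$ is surjective because $F$ has nonempty values, $q$ because $F$ is surjective. Being continuous maps from a compact space to a Hausdorff space they are automatically closed. Their fibres are
\[
p^{-1}(x)=\{x\}\times F(x),\qquad q^{-1}(y)=F^{-1}(y)\times\{y\},
\]
which are closed in $\Gamma$ (hence compact) and homeomorphic respectively to $F(x)$ and to the fibre $F^{-1}(y)=\{x\in X:y\in F(x)\}$; by hypothesis all of these are acyclic.

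The heart of the argument is then the Vietoris--Begle mapping theorem for Alexander--Spanier cohomology (see e.g.\ Spanier, Ch.~6): a closed continuous surjection between paracompact Hausdorff spaces with acyclic fibres induces an isomorphism on $H^\ast$. Applying it to $p$ and to $q$ yields isomorphisms
\[
p^\ast\colon H^\ast(X)\xrightarrow{\ \approx\ }H^\ast(\Gamma),\qquad q^\ast\colon H^\ast(Y)\xrightarrow{\ \approx\ }H^\ast(\Gamma),
\]
and composing gives the required isomorphism $H^\ast(X)\approx H^\ast(Y)$.

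The only delicate point is checking the hypotheses of Vietoris--Begle cleanly; the compactness of $\Gamma$ deduced from upper semicontinuity and the compactness of $X$ gives closedness of the projections and compactness of their fibres for free, so the verification reduces to quoting the acyclicity hypotheses on values and fibres of $F$. No further analytic input is needed.
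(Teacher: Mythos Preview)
Your proof is correct and follows exactly the same graph--double-projection--Vietoris--Begle strategy as the paper. Your extra observation that $Y=F(X)$ is automatically compact (since $F$ is usc with compact values and $X$ is compact) is a neat simplification: it makes $\Gamma$ compact and hence renders closedness of both projections automatic, whereas the paper works only with paracompactness of $\Graph(F)$ and verifies closedness of $\pi_1$ and $\pi_2$ by hand.
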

\begin{proof}
Since $X$ is compact, the product $X\times Y$ is a paracompact space. The space $Y$ is regular and the map $F$ is usc so the graph $\Graph(F)$ is a closed subset of $X\times Y$. Thus it is also a paracompact space. The projection $\pi_1\colon\Graph(F)\to X$ of $\Graph(F)$ onto the domain $X$ is continuous and surjective. It is easy to see that $\pi_1$ is a closed map, since $F$ is compact valued and upper semicontinuous. Moreover, the fibers $\pi_1^{-1}(\{x\})=\{x\}\times F(x)$ are compact acyclic. Hence $\pi_1$ is perfect and consequently a proper map. Analogously, the projection $\pi_2\colon\Graph(F)\to Y$ is surjective continuous and the preimage $\pi_2^{-1}(\{y\})=F^{-1}(\{y\})\times\{y\}$ is compact acyclic. The map $\pi_2$ is also closed, since the domain $X$ is compact. In wiev of Vietoris--Begle mapping theorem \cite[Theorem 6.9.15]{spanier} it follows that $(\pi_1^*)^{-1}\circ\pi_2^*\colon H^\ast(Y)\to H^\ast(X)$ is an isomorphism.
\end{proof}
Recall that for the sake of convenience we had introduced the letter $H$ to denote the superposition $K\circ N_F$.
\begin{lemma}\label{usc2}
Let $(\F_1)$--$(\F_4)$ be satisfied. Assume that either $E$ is reflexive and $(\es_3)$ holds or $(\es_1)$--$(\es_2)$ and $(\F_5)$ are met. In both cases, the operator $H\colon L^p(I,E)\map L^p(I,E)$ is a compact valued upper semicontinuous map.
\end{lemma}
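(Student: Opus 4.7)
The plan is to combine the weak-topology control of $N_F$ with the norm compactness available from $K$, verifying the sequential criterion for upper semicontinuity with compact values in the metric space $L^p(I,E)$. In case (i) Corollary \ref{niemycor} and in case (ii) Lemma \ref{Niemy} guarantee that $N_F\colon L^p(I,E)\map L^q(I,E)$ is weakly upper semicontinuous with nonempty convex weakly compact values; this is the common point of departure.

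For the \emph{compactness of $H(u)$}, I fix $u\in L^p(I,E)$ and take a sequence $v_n\in K(w_n)$ with $w_n\in N_F(u)$. Since $N_F(u)$ is weakly compact in the angelic space $(L^q(I,E),w)$, I extract $w_{n_k}\rightharpoonup w\in N_F(u)$. In case (i), the growth bound $(\F_4)$ makes $N_F(u)$ uniformly $q$-integrable while reflexivity of $E$ renders the vertical slices weakly compact, so $(\es_3)$ applies and presents $K$ as acyclic---hence, by Remark \ref{dyskusja}(i), compact-valued upper semicontinuous---from $(N_F(u),w)$ into $L^p(I,E)$. In case (ii), Pettis measurability of the countable family $\{u\}\cup\{w_n\}$ provides a closed separable $E_0'\subset E$ carrying $u(t)$ and each $w_n(t)$ a.e., so that $w_n(t)\in F(t,u(t))\cap E_0'$ and $(\F_5)$ applied to $E_0'$ yields relative compactness of $\{w_n(t)\}$ in $E$ a.e.; Lemma \ref{usc} then supplies both the relative norm compactness of $K(\{w_n\})$ and the required upper semicontinuity. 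In both cases a further subsequence $v_{n_{k_j}}$ converges in norm to some $v$, and the closed-graph property of a compact-valued usc map forces $v\in K(w)\subset H(u)$.

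For \emph{upper semicontinuity of $H$}, I verify the sequential criterion: given $u_n\to u$ strongly in $L^p(I,E)$ and $v_n\in K(w_n)$ with $w_n\in N_F(u_n)$, the weak upper semicontinuity and weakly compact values of $N_F$ yield $w_{n_k}\rightharpoonup w\in N_F(u)$. The family $\{w_n\}$ is uniformly $q$-integrable by the growth bound $(\F_4)$ combined with the Vitali-type uniform integrability of $\{|u_n|^p\}$ (the case $p=\infty$ is absorbed by the alternative clause of $(\F_4)$). In case (i) reflexivity again produces weakly compact slices and $(\es_3)$ applies directly; in case (ii) I pass to a sub-subsequence with $u_{n_k}(t)\to u(t)$ a.e.\ inside a closed separable $E_0'$ chosen to hold the traces of $u_{n_k}$ and $w_{n_k}$ too, then invoke $(\F_5)$ to deduce relative compactness of $\{w_{n_k}(t)\}$ in $E$ a.e., and finish via Lemma \ref{usc}. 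Either way $K(\{w_{n_k}\})$ is relatively norm compact in $L^p(I,E)$, so a sub-subsequence $v_{n_{k_j}}\to v\in K(w)\subset H(u)$.

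The step I expect to be most delicate is the bookkeeping, in case (ii), of a single closed separable subspace $E_0'$ that simultaneously contains every $u_n(t)$ and every selection $w_n(t)$ almost everywhere; only inside such a common $E_0'$ does $(\F_5)$ upgrade the mere a priori boundedness of $F(t,u_n(t))$ to the norm compactness of vertical slices that Lemma \ref{usc} demands. Once this separability is secured through Pettis measurability of the countable family $\{u_n,w_n\}$, the argument meshes with the reflexive case and both compactness and upper semicontinuity of $H$ follow uniformly from the two branches of hypothesis.
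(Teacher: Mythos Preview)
Your argument follows essentially the same route as the paper: verify the sequential characterization of compact-valued upper semicontinuity by combining the weak upper semicontinuity of $N_F$ (Lemma~\ref{Niemy}/Corollary~\ref{niemycor}) with the weak-to-strong compactness furnished by $K$ via $(\es_3)$ or Lemma~\ref{usc}. The only point to tighten is in case~(i) of the upper semicontinuity step: after extracting $w_{n_k}\rightharpoonup w$ you invoke reflexivity to obtain relatively weakly compact slices $\{w_{n_k}(t)\}$, but this requires the slices to be \emph{bounded} a.e., which does not follow from $u_n\to u$ in $L^p$ alone---you must first pass (as you correctly do in case~(ii)) to a sub-subsequence with $u_{n_k}(t)\to u(t)$ a.e.; the paper avoids this slip by extracting the a.e.-convergent subsequence of $(u_n)$ at the outset, before touching the $w_n$.
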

\begin{proof}
Assume that $u_n\xrightarrow[n\to\infty]{L^p(I,E)}u$. Obviously, there exists a subsequence $\z{u_{k_n}}$ such that $u_{k_n}(t)\xrightarrow[n\to\infty]{E}u(t)$ for a.a. $t\in I$. Let $w_n\in N_F(u_n)$ and $v_n\in K(w_n)$ for $\n$. Observe that the sequence $\z{w_n}$ is bounded uniformly $q$-integrable (or simply bounded for $p=\infty$).\par If $E$ is reflexive, then the map $F(t,\cdot)$ is weakly upper semicontinuous and possesses weakly compact values a.e. on $I$. Thus, the sets $\x{w_{k_n}(t)}$ are relatively weakly compact for a.a. $t\in I$. The sequence $\z{w_{k_n}}$ is relatively compact in view of Theorem \ref{ulger}. We may assume, passing again to a subsequence if necessary, that $w_{k_n}\xrightharpoonup[n\to\infty]{L^q(I,E)}w$. Condition $(\es_3)$ implies (cf. Remark \ref{dyskusja}) that $v_{k_n}\xrightarrow[n\to\infty]{L^p(I,E)}v\in K(w)$, up to a subsequence. It is enough to apply Corollary \ref{niemycor} to show that $w\in N_F(u)$. Eventually, $v\in H(u)$, i.e. the set-valued map $H\colon L^p(I,E)\map L^p(I,E)$ is an upper semicontinuous operator with compact values.\par If assumption $(\F_5)$ is met, then the multimap $F(t,\cdot)$ is compact valued and upper semicontinuous a.e. on $I$. In this case the sets $\x{w_{k_n}(t)}$ are relatively compact in $E$ for a.a. $t\in I$. Passing to a subsequence if necessary, we obtain $w_{k_n}\xrightharpoonup[n\to\infty]{L^q(I,E)}w$. By virtue of Lemma \ref{usc} there exists a subsequence (again denoted by) $\z{v_{k_n}}$ such that $v_{k_n}\xrightarrow[n\to\infty]{L^p(I,E)}v\in K(w)$. Lemma \ref{Niemy} implies that $w\in N_F(u)$. This means that $v\in H(u)$.
\end{proof}

\begin{lemma}\label{(3)}
Let $U\subset L^p(I,E)$ and $x_0\in U$. Assume that $F\colon I\times E\map E$ satisfies $(\F_1)$--$(\F_4)$. Suppose further that operator $H\colon\overline{U}\map L^p(I,E)$ with uniformly $p$-integrable range $($or bounded range if $p=\infty)$ meets the following condition:
\begin{equation}\label{monch3}
\f(M\subset\overline{U}\text{ and }\overline{M}\subset\overline{\co}\,\big(\{x_0\}\cup H(M)\big)\g)\Longrightarrow M(t)\text{ is relatively compact for a.a. }t\in I.
\end{equation}
Assume also that either $E$ is reflexive and $(\es_3)$ holds or $(\es_1)$--$(\es_2)$ and $(\F_5)$ are met. In both the cases, condition \eqref{monch} is fulfilled.
\end{lemma}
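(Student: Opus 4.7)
My plan is to apply hypothesis \eqref{monch3} directly to $\Omega$ and then upgrade the resulting pointwise relative compactness in $E$ to norm relative compactness in $L^p(I,E)$ by exploiting the smoothing effect of $K$ on countable subsequences. Since $\Omega \subset \overline{\co}(\{x_0\} \cup H(\Omega))$ and the right-hand side is $L^p$-closed, $\overline{\Omega}$ also lies in this set, so \eqref{monch3} yields relative compactness of $\Omega(t) \subset E$ for a.a.\ $t \in I$; meanwhile $\Omega$ inherits uniform $p$-integrability (or boundedness when $p = \infty$) from $H(\overline{U})$, because it sits inside $\overline{\co}(\{x_0\} \cup H(\Omega))$ and these properties pass to convex hulls and $L^p$-closures.

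Given an arbitrary sequence $\x{u_n} \subset \Omega$, I would pick norm-approximating sequences $v_n^m \to u_n$ with $v_n^m \in \co(\{x_0\} \cup H(\Omega))$, write each $v_n^m$ as a finite convex combination involving elements $h^{n,m,i} \in H(u^{n,m,i})$ for some $u^{n,m,i} \in \Omega$, and let $W$ denote the countable collection of all these $u^{n,m,i}$'s. Then $\x{u_n} \subset \overline{\co}(\{x_0\} \cup H(W))$ by construction, so it suffices to show $H(W) = K(N_F(W))$ is relatively norm compact in $L^p(I,E)$: Mazur's theorem will make $\overline{\co}(\{x_0\} \cup \overline{H(W)})$ norm compact and produce a norm-convergent subsequence of $\x{u_n}$.

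To prove norm compactness of $H(W)$, I would take any sequence $y_k = K(w_k)$ with $w_k \in N_F(u_{(k)})$, $u_{(k)} \in W$, and treat the two cases separately. Under $(\F_5)$, choosing a closed separable subspace $E_0 \subset E$ supporting the countable families $\x{u_{(k)}}$ and $\x{w_k}$ a.e.\ and invoking relative compactness of $\x{u_{(k)}(t)} \subset \Omega(t)$, $(\F_5)$ renders $\x{w_k(t)} \subset F(t,\overline{\x{u_{(k)}(t)}}) \cap E_0$ relatively compact in $E$ a.e., so Lemma \ref{usc} yields a norm-convergent subsequence of $\x{y_k}$. In the reflexive case, $N_F(W)$ is uniformly $q$-integrable (by $(\F_4)$ and uniform $p$-integrability of $W$) with pointwise slices $N_F(W)(t) \subset F(t,W(t))$ bounded and therefore relatively weakly compact, so Theorem \ref{ulger} gives relative weak compactness of $N_F(W)$ in $L^q(I,E)$; then $(\es_3)$ together with Remark \ref{dyskusja}(i) extracts a norm-convergent subsequence of $\x{y_k}$ from a weakly convergent subsequence of $\x{w_k}$.

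The chief technical hurdle I expect is the bookkeeping around the separable subspace $E_0$ in the non-reflexive branch — verifying it contains both $\x{u_{(k)}(t)}$ and $\x{w_k(t)}$ a.e., so $(\F_5)$ is applicable with the right restriction — together with confirming uniform $q$-integrability of $N_F(W)$ in the reflexive branch via the growth bound $(\F_4)$. Once these routine verifications are in place, the pipeline ``pointwise relative compactness in $E$ $\Rightarrow$ weak compactness in $L^q$ $\Rightarrow$ norm compactness in $L^p$ via $K$'' runs cleanly, and \eqref{monch} follows.
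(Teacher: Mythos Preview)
Your proposal is correct and follows essentially the paper's route: apply \eqref{monch3} for pointwise relative compactness of the slices, then show that any sequence drawn from $K(N_F(\cdot))$ has a norm-convergent subsequence via Lemma \ref{usc} (under $(\es_1)$--$(\es_2)$, $(\F_5)$) or via Theorem \ref{ulger} and $(\es_3)$ (in the reflexive case), and conclude by Mazur. The paper is slightly more direct --- it takes an arbitrary sequence $\{v_n\}\subset H(\Omega)$ and shows it is relatively compact, thereby proving $H(\Omega)$ itself is relatively compact and skipping your intermediate reduction to a countable $W\subset\Omega$; that extra layer in your argument is correct but unnecessary.
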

\begin{proof}
Assume that $M\subset\overline{U}$ and $\overline{M}\subset\overline{\co}\,\big(\{x_0\}\cup H(M)\big)$. Consider $\x{v_n}\subset H(M)$. Let $v_n\in K(w_n)$ with $w_n\in N_F(M)$. By \eqref{monch3}, vertical slices $M(t)$ are relatively compact for a.a. $t\in I$. \par Suppose $E$ is reflexive. Taking into account that $\x{w_n(t)}\subset F(t,\overline{M(t)})$ a.e. on $I$ and that $F(t,\cdot)$ is weakly upper semicontinuous we see that $\x{w_n(t)}$ is relatively weakly compact for a.a. $t\in I$. Since $M$ forms a subset of uniformly $p$-integrable convex hull $\co\,\big(\{x_0\}\cup H(M)\big)$, the sequence $\z{w_n}$ must be uniformly $q$-integrable. It follows from condition $(\es_3)$ that the image $K\big(\overline{\x{w_n}}^w\big)$ is compact in $L^p(I,E)$. Hence the set $\x{v_n}$ is relatively compact. The latter entails the relative compactness of $\overline{\co}\,\big(\{x_0\}\cup H(M)\big)$ and eventually the compactness of the closure $\overline{M}$.\par  If conditions $(\es_1)$--$(\es_2)$ and $(\F_5)$ are met, then the map $F(t,\cdot)$ is upper semicontinuous, vertical slices $\x{w_n(t)}$ are relatively compact a.e. on $I$ and the image $K\big(\x{w_n}\big)$ is relatively compact in the space $L^p(I,E)$ in view of Lemma \ref{usc}. Therefore, $\overline{M}$ is a compact subset of $L^p(I,E)$.
\end{proof}

\begin{remark}
{\em Clearly, the operator $H\colon\overline{U}\map L^p(I,E)$, which is condensing relative to some monotone nonsingular and regular MNC $\gamma$ defined on the space $L^p(I,E)$, satisfies condition \eqref{monch}.}
\end{remark}

The eponymous solvability of operator inclusions of Hammerstein type expresses itself in the following fixed point principle, formulated in the context of the Bochner space $L^p(I,E)$.

\begin{theorem}\label{solvability}
Let $X$ be a closed convex subset of the space $L^p(I,E)$. Assume that either
\begin{itemize} 
\item[(i)] the space $E$ is reflexive, the operator $K\colon L^q(I,E)\map X$ possesses convex or decomposable fibers and satisfies assumption $(\es_3)$, the multimap $F\colon I\times E\map E$ meets conditions $(\F_1)$--$(\F_4)$\par\noindent or
\item[(ii)] the operator $K\colon L^q(I,E)\map X$ possesses compact acyclic values and convex or decomposable fibers and satisfies assumptions $(\es_1)$--$(\es_2)$, the set-valued map $F\colon I\times E\map E$ meets conditions $(\F_1)$--$(\F_5)$.
\end{itemize}
Suppose further that there exists a radius $R>0$ such that
\begin{equation}\label{radius1}
L\f(||b||_q+c\f(R+||K(0)||_p^+\g)^\frac{p}{q}\g)\<R
\end{equation}
if $p<\infty$ and respectively
\begin{equation}\label{radius2}
L\f\Arrowvert b_{{\textstyle R+||K(0)||_\infty^+}}\g\Arrowvert_q\<R
\end{equation}
provided $p=\infty$. If the operator $H\colon D_{L^p}(K(0),R)\cap X\map X$ with uniformly $p$-integrable range satisfies condition \eqref{monch3} with $x_0\in K(0)$, then there exists at least one solution $x\in D_{L^p}(K(0),R)\cap X$ of the initial inclusion~\eqref{hammer}.
\end{theorem}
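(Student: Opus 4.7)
My plan is to apply the continuation principle of Theorem \ref{fixed} to $H := K\circ N_F$ on the closed neighbourhood $\overline{U} := D_{L^p}(K(0),R)\cap X$, using any base point $x_0\in K(0)\subset U$ (with $U := B_{L^p}(K(0),R)\cap X$ relatively open in $X$). Three ingredients must be verified: (a) admissibility of $H\colon\overline{U}\to X$; (b) the M\"onch condition \eqref{monch}; (c) the Leray--Schauder boundary condition \eqref{LS}.

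For admissibility, Lemma \ref{Niemy} (in case (ii)) or Corollary \ref{niemycor} (in case (i)) ensures that $N_F$ has convex weakly compact values and is weakly upper semicontinuous, so $N_F$ is acyclic. I would realise $H$ through the pullback
\[\Gamma\;:=\;\{(u,w,v)\in\overline{U}\times L^q(I,E)\times L^p(I,E):\;w\in N_F(u),\,v\in K(w)\},\]
with the natural projections $\pi_1\colon\Gamma\to\overline{U}$ and $\pi_3\colon\Gamma\to L^p(I,E)$, so that $H(u)=\pi_3(\pi_1^{-1}(u))$. The main technical step is showing each fibre $\pi_1^{-1}(u)\cong\Graph(K|_{N_F(u)})$ is compact and acyclic. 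Compactness follows from weak compactness of $N_F(u)$ together with the weak-to-norm upper semicontinuity of $K|_{N_F(u)}$ (from $(\es_3)$ in case (i); from $(\es_1)$--$(\es_2)$ combined with $(\F_5)$ and Lemma \ref{usc} in case (ii)). Acyclicity follows from a Vietoris--Begle argument applied to the further projection $\Graph(K|_{N_F(u)})\to N_F(u)$, whose fibres $K(w)$ are compact acyclic and whose base $N_F(u)$ is convex, hence acyclic. Closedness of $\pi_1$ is guaranteed by Lemma \ref{usc2}. The same Vietoris--Begle machinery (via Lemma \ref{acyc2}) shows each value $H(u)$ is acyclic: the counter-image fibres $K^{-1}(v)\cap N_F(u)$ are intersections of two convex sets (hence convex) or of two decomposable sets (hence decomposable and so absolute retracts), and are acyclic in either case.

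For the geometric estimate, any $u\in\overline{U}$ satisfies $\|u\|_p\le R+\|K(0)\|_p^+$ by the triangle inequality applied to a near-minimiser of $\dist(u,K(0))$. Hence $(\F_4)$ yields $\|w\|_q\le\|b\|_q+c(R+\|K(0)\|_p^+)^{p/q}$ for every $w\in N_F(u)$ (and $\|w\|_q\le\|b_{R+\|K(0)\|_\infty^+}\|_q$ when $p=\infty$), and then $(\es_2)$ combined with \eqref{radius1} (resp.\ \eqref{radius2}) produces
\[\dist(v,K(0))\;\le\;h(K(w),K(0))\;\le\;L\|w\|_q\;\le\;R\]
for every $v\in K(w)$. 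Thus $H(\overline{U})\subset\overline{U}$. The Leray--Schauder condition then follows in Rothe style (Remark \ref{rem}(ii)): for $u\in\partial U$ of the form $u=(1-\lambda)x_0+\lambda v$ with $v\in H(u)$ and $\lambda\in(0,1)$, convex-combining $\dist(x_0,K(0))=0$ with $\dist(v,K(0))\le R$ gives $\dist(u,K(0))<R$, contradicting $u\in\partial U$. Finally, the hypothesis \eqref{monch3} and uniform $p$-integrability of the range of $H$ are converted into \eqref{monch} by Lemma \ref{(3)}. Theorem \ref{fixed} then delivers the desired fixed point $u\in\overline{U}$, solving \eqref{hammer} inside $D_{L^p}(K(0),R)\cap X$.

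The most delicate part will be the Vietoris--Begle verification of acyclicity for the pullback fibres $\pi_1^{-1}(u)$: one has to simultaneously track the weak topology on $N_F(u)$, the mixed weak-to-norm upper semicontinuity of $K$, and the convex-versus-decomposable dichotomy in the fibre hypothesis on $K$, and the asymmetry between cases (i) and (ii) (reflexivity versus $(\F_5)$, and $(\es_3)$ versus $(\es_1)$--$(\es_2)$) must be handled case by case.
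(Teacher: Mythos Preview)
Your overall strategy matches the paper's: apply Theorem \ref{fixed} to $H=K\circ N_F$ on $\overline{U}=D_{L^p}(K(0),R)\cap X$, verifying admissibility, condition \eqref{monch} via Lemma \ref{(3)}, and \eqref{LS} via the Rothe-type inclusion $H(\overline{U})\subset\overline{U}$. The boundary estimate and the invocation of Lemma \ref{(3)} are essentially identical to the paper's (the paper uses compactness of $K(0)$ to pick an exact minimiser $z_u\in K(0)$ of $\dist(u,K(0))$ rather than a near-minimiser, but this is cosmetic).

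Where you diverge is the admissibility argument, and here you work harder than necessary. The paper does not build an explicit Vietoris pair through a triple-product graph $\Gamma$; it simply shows that $H$ is an \emph{acyclic} map---upper semicontinuous with compact acyclic values---and such maps are automatically admissible via the canonical decomposition $\Graph(H)\to\overline{U}$, $\Graph(H)\to X$. Upper semicontinuity and compactness of values come from Lemma \ref{usc2}; acyclicity of each $H(u)$ follows exactly by your own argument, applying Lemma \ref{acyc2} to the surjection $K\colon(N_F(u),w)\map(H(u),\|\cdot\|_p)$ with acyclic values $K(w)$ and acyclic (convex or decomposable) fibres $K^{-1}(\{v\})\cap N_F(u)$. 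Since you already carry out this step, your elaborate pullback $\Gamma$ with its mixed weak/norm topology is redundant: once $H(u)$ is known to be acyclic you are done. The paper's route is shorter and sidesteps precisely the ``most delicate part'' you flag---the topological bookkeeping on $\Gamma$ and the verification that $\pi_1$ is closed and proper in that mixed topology. Your approach would work, but it buys nothing over the direct acyclicity argument you also sketch.
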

\begin{proof}
\par Fix $u\in L^p(I,E)$. The subset $N_F(u)$ furnished with the relative weak topology of $L^q(I,E)$ is compact (cf. Lemma \ref{Niemy} or Corollary \ref{niemycor}). Moreover, $(N_F(u),w)$ is in fact an acyclic space, given that $N_F(u)$ is always contractible in the weak topology $\sigma(L^q(I,E),L^p(I,E^*))$ (regardless of whether the values of $F$ are convex or not, because values of the Nemytski\v{\i} operator are still decomposable). Under assumption $(\es_3)$ the multimap $K\colon\f(N_F(u),w\g)\map(H(u),||\cdot||_p)$ may be regarded as an acyclic operator between compact topological space $(N_F(u),w)$ and a paracompact space $(H(u),||\cdot||_p)$. The same can be said if we assume that $K$ is Lipschitz with compact acyclic values. Observe that the intersection $K^{-1}(\{v\})\cap N_F(u)$ is convex in case $K$ has convex fibers or decomposable if we assume that the fibers of $K$ are decomposable. Therefore, the fibers of the mutimap under consideration are acyclic. In view of Lemma \ref{acyc2} we are allowed to conclude that the reduced Alexander--Spanier cohomologies $\w{H}^\ast((H(u),||\cdot||_p))$ are trivial, i.e. the image $H(u)$ is acyclic as a subspace of $L^p(I,E)$.\par From Lemma \ref{usc2} follows that the operator $H\colon D_{L^p}(K(0),R)\cap X\map X$ is compact valued upper semicontinuous. Considering what we have established so far, it is apparent that $H$ is an acyclic operator. Lemma \ref{(3)} guarantees that condition \eqref{monch} is also satisfied. \par Let $p<\infty$ and $R>0$ be matched according to \eqref{radius1}. Take $u\in D_{L^p}(K(0),R)$. Since $K(0)$ is compact (both in the case (i) and in the case (ii)), there is $z_u\in K(0)$ such that $||u-z_u||_p=\dist(u,K(0))$. Observe that
\[||N_F(u)||_q^+\<||b||_q+c||u||_p^\frac{p}{q}\<||b||_q+c\f(||u-z_u||_p+||z_u||_p\g)^\frac{p}{q}\<||b||_q+c\f(R+||K(0)||^+_p\g)^\frac{p}{q}.\] Whence \[\sup_{v\in H(u)}\dist(v,K(0))\<\sup_{w\in N_F(u)}h(K(w),K(0))\<\sup_{w\in N_F(u)}L\,||w||_q=L\,||N_F(u)||^+_q\<L\f(||b||_q+c\f(R+||K(0)||^+_p\g)^\frac{p}{q}\g).\] Eventually, \[H\f(D_{L^p}(K(0),R)\cap X\g)\subset D_{L^p}(K(0),R)\cap X,\] by \eqref{radius1}. The latter entails \eqref{LS}. Indeed, fix any $x_0\in K(0)$, $\lambdaup\in(0,1)$ and $x\in D_{L^p}(K(0),R)$. Then \[\sup_{v\in H(x)}\dist((1-\lambdaup)x_0+\lambdaup v,K(0))\<\lambdaup\sup_{v\in H(x)}\dist(v,K(0))\<\lambdaup R<R.\] Thus, $x\not\in(1-\lambdaup)x_0+\lambdaup H(x)$ provided $x\in\partial D_{L^p}(K(0),R)$. In analogous manner one can show that Leray-Schauder boundary condition \eqref{LS} is satisfied under assumption \eqref{radius2}.\par  In view of Theorem \ref{fixed}. we infer that the multifunction $H$ has a fixed point in $D_{L^p}(K(0),R)\cap X$.
\end{proof}
\begin{remark}
{\em As it comes to formulation of sufficient conditions for acyclicity of the values of the superposition $K\circ N_F$ $($cf. \cite[Remark 4.2]{precup}$)$, it should be emphasized that condition: for all $w_0,w_1,w_2\in L^q(I,E)$, the equality $K(w_1)=K(w_2)$ implies \[K\f(w_1{\bf 1}_{[0,\lambdaup]}+w_0{\bf 1}_{[\lambdaup,T]}\g)=K\f(w_2{\bf 1}_{[0,\lambdaup]}+w_0{\bf 1}_{[\lambdaup,T]}\g)\] for every $\lambdaup\in I$, is much stronger than assumption regarding the decomposability of the fibers of operator $K$. Similarly, the condition that operator $K$ is affine is visibly stronger than the fact that $K$ has convex fibers.}
\end{remark}

\section{Examples}
We conclude this paper with examples, which illustrate the wide range of applications of the unified topological approach, developed in the previous section, to integro-differential inclusions.\\\par
\noindent{\bf Example 1.} Given an $m$-accretive operator $A\colon D(A)\subset E\map E$ in a Banach space $E$ and a multivalued perturbation $F\colon I\times\overline{\co}D(A)\map E$ we consider the initial value problem: 
\begin{equation}\label{cauchy}
\begin{gathered}
\dot{u}(t)\in -Au(t)+F(t,u(t))\;\;\;\text{on }I,\\
u(0)=u_0.
\end{gathered}
\end{equation}
If $A$ is $m$-accretive and $U(\cdot)x$ is an integral solution of \eqref{cauchy} with $F\equiv 0$ and $u(0)=x$, then the family $\{U(t)\}_{t\geqslant 0}$ of nonexpansive mappings $U(t)\colon\overline{D(A)}\to\overline{D(A)}$ is called the semigroup generated by $-A$.
\begin{theorem}
Let $E^*$ be strictly convex and $A\colon D(A)\subset E\map E$ be $m$-accretive such that $-A$ generates an equicontinuous semigroup. Assume that $F\colon I\times\overline{\co}\,D(A)\map E$ satisfies $(\F_1)$-$(\F_3)$ together with 
\begin{itemize}
\item[$(\F_4')$] there is $\mu\in L^1(I,\R{})$ such that $||F(t,x)||^+\<\mu(t)(1+|x|)$ for all $x\in E$ and for a.a. $t\in I$ 
\end{itemize}
and
\begin{itemize}
\item[$(\F_6)$] there is a function $\eta\in L^1(I,\R{})$ such that for all bounded subsets $\Omega\subset E$ and for a.a. $t\in I$ the inequality holds \[\beta(F(t,\Omega))\<\eta(t)\beta(\Omega).\]
\end{itemize}
Then the Cauchy problem \eqref{cauchy} has an integral solution for every $u_0\in\overline{D(A)}$.
\end{theorem}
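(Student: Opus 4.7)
The plan is to recast \eqref{cauchy} as a Hammerstein inclusion $u\in(\es\circ N_F)(u)$ in $L^\infty(I,E)$ with exponents $p=\infty$, $q=1$, and to apply Theorem~\ref{fixed} directly (thereby bypassing the radius bound \eqref{radius2}, which would otherwise force $\|\mu\|_1<1$). Here $\es\colon L^1(I,E)\to X:=C(I,\overline{\co}\,D(A))$ assigns to $f$ the unique B\'enilan integral solution of $\dot u+Au\ni f$, $u(0)=u_0$; this is single-valued, satisfies $\|\es(f_1)-\es(f_2)\|_\infty\leqslant\|f_1-f_2\|_1$, and gives $\es(0)=U(\cdot)u_0$. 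The strict convexity of $E^*$ enters only at the level of single-valuedness of the duality map used in the integral-solution inequality.

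The hypotheses on $F$ and $\es$ are then checked in turn. $(\F_1)$--$(\F_3)$ are granted; $(\F_4)$ at $p=\infty$ is met by $b_R(t):=\mu(t)(1+R)\in L^1(I,\R{})$; $(\F_5)$ follows from $(\F_6)$ since $\beta(F(t,C)\cap E_0)\leqslant\eta(t)\beta(C)=0$ when $C\subset E_0$ is compact; $(\es_2)$ holds with $L=1$; and $(\es_1)$ reduces, thanks to single-valuedness of $\es$, to sequential weak-to-norm continuity of $\es$ on $L^1(I,C)$ for each compact $C\subset E$, which I establish by combining the equicontinuity of $\{U(t)\}_{t\geqslant 0}$ (yielding equicontinuity of $\{\es(f_n)\}\subset C(I,E)$) with a Vrabie-type compactness lemma for $m$-accretive evolutions (yielding pointwise relative compactness), Arzel\`a--Ascoli, and passage to the limit in the integral-solution inequality using $f_n\rightharpoonup f$.

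The crucial structural point is that the fibers $\es^{-1}(v)$ are decomposable: if $\es(f_1)=\es(f_2)=v$ and $A\subset I$ is a finite union of intervals, then $\es(f_1\mathbf{1}_A+f_2\mathbf{1}_{A^c})=v$ by the interval-restriction property of integral solutions, and a general measurable $A$ is handled by approximation (in the $L^1$-norm of indicators) combined with the $1$-Lipschitz continuity of $\es$. Applying Lemma~\ref{acyc2} to $\es|_{N_F(u)}\colon N_F(u)\to H(u)$, with the relevant fibers now intersections of decomposable sets (hence acyclic via Fryszkowski-type retractions), yields acyclicity of $H(u)$; Lemma~\ref{usc2} then delivers compact-valued upper semicontinuity of $H$, and the M\"onch condition \eqref{monch} follows from Lemma~\ref{(3)} via $(\F_6)$ and the contraction of $\es$, which together produce $\beta_E(M(t))\leqslant\int_0^t\eta(s)\beta_E(M(s))\,ds$ and hence $\beta_E(M(t))\equiv 0$ by Gronwall.

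To conclude, I apply Theorem~\ref{fixed} on $U=B_{L^\infty}(\es(0),R)\cap X$ with $R$ chosen larger than the Gronwall bound $(|u_0|+\|\mu\|_1)\exp(\|\mu\|_1)+|u_0|$ on $\|x-\es(0)\|_\infty$. The Leray--Schauder condition \eqref{LS} follows from the a priori estimate $|x(t)|\leqslant|u_0|+\lambda\int_0^t\mu(s)(1+|x(s)|)\,ds$, valid for any $x=(1-\lambda)\es(0)+\lambda v$ with $v\in H(x)$, via Gronwall's lemma. The step I anticipate to be most delicate is the verification of the weak-to-norm sequential continuity of $\es$ required by $(\es_1)$, which rests essentially on the equicontinuity of the semigroup generated by $-A$ together with the compactness properties specific to $m$-accretive evolutions driven by uniformly integrable forcings with pointwise compact range.
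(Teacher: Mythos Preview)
Your plan is correct and follows the paper's argument closely: the same solution operator $\es=S$, the same verification of $(\es_1)$--$(\es_2)$, the same M\"onch--Gronwall mechanism via Bothe's pointwise estimate $\beta(\{S(w_n)(t)\})\leqslant\int_0^t\beta_{E_0}(\{w_n(s)\})\,ds$ (your displayed inequality is missing a harmless constant factor; the paper obtains~$4$), and the same appeal to Theorem~\ref{fixed}. The one substantive divergence is the fibers of $\es$. The paper shows $S^{-1}(\{u\})$ is \emph{convex}: strict convexity of $E^*$ forces $\langle\cdot,\cdot\rangle_+=\langle\cdot,\cdot\rangle_-$, so convex combinations of forcings still satisfy the integral-solution inequality. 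You instead argue $S^{-1}(\{u\})$ is \emph{decomposable} via the interval-restriction property and $L^1$-approximation, which is an equally valid entry point into Lemma~\ref{acyc2} and does not spend the strict-convexity hypothesis here. One point deserves sharpening: strict convexity of $E^*$ is \emph{not} needed for the well-definedness of $\es$ or the basic integral-solution inequality (those hold for any $m$-accretive $A$); its indispensable role, as the paper makes explicit (citing Tolstonogov, and pointing to Bothe's counter-example), is in identifying the limit $S(w)=v$ when $w_n\rightharpoonup w$ and $S(w_n)\to v$, i.e.\ precisely the step you label ``passage to the limit in the integral-solution inequality'' in verifying $(\es_1)$. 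Finally, the paper secures the Leray--Schauder condition by first obtaining the Gronwall a~priori bound and then truncating $F$ so that Rothe's condition $H(\partial U)\subset\overline{U}$ holds, whereas you derive \eqref{LS} directly from the same a~priori estimate; either route works.
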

\begin{proof}
As it is well known, we can associate with any $w\in L^1(I,E)$ a unique integral solution $S(w)\in C(I,\overline{D(A)})$ of the quasi-autonomous problem
\begin{equation}\label{cauchy2}
\begin{gathered}
\dot{u}(t)\in -Au(t)+w(t)\;\;\;\text{on }I,\\
u(0)=u_0.
\end{gathered}
\end{equation}
The mapping $S\colon L^1(I,E)\to C(I,E)$ satisfies \[|S(w_1)(t)-S(w_2)(t)|\<|S(w_1)(s)-S(w_2)(s)|+\int_s^t|w_1(\tau)-w_2(\tau)|\,{\rm d}\tau\] for all $0\<s\<t\<T$, which means in particular that $S$ meets condition $(\es_2)$.\par Take $w_1,w_2\in S^{-1}(\{u\})$ and fix $\lambdaup\in(0,1)$. For every $(x,y)\in\Graph(A)$ and $0\<s\<t\<T$ the following inequality holds \[|u(t)-x|^2-|u(s)-x|^2\<2\int_s^t\langle w_i(\tau)-y,u(\tau)-x\rangle_+\,{\rm d}\tau,\] where $i=1$ and $i=2$ for the cases $S(w_1)$ and $S(w_2)$, respectively. Since $E^*$ is strictly convex, the semi-inner products are indistinguishable, i.e. $\langle x,y\rangle_+=\langle x,y\rangle_-$. In view of the latter we are allowed to write down the following estimation:
\begin{align*}
|u(t)-x|^2-|u(s)-x|^2&\<\lambdaup\,2\!\int_s^t\langle w_1(\tau)-y,u(\tau)-x\rangle_+\,{\rm d}\tau+(1-\lambdaup)\,2\!\int_s^t\langle w_2(\tau)-y,u(\tau)-x\rangle_+\,{\rm d}\tau\\&=2\int_s^t\lambdaup\langle w_1(\tau)-y,u(\tau)-x\rangle_++(1-\lambdaup)\langle w_2(\tau)-y,u(\tau)-x\rangle_+\,{\rm d\tau}\\&=2\int_s^t\langle(\lambdaup w_1+(1-\lambdaup)w_2)(\tau)-y,u(\tau)-x\rangle_+\,{\rm d}\tau.
\end{align*}
This means that $u$ constitutes a solution to the quasi-autonomous problem
\begin{equation*}
\begin{gathered}
\dot{u}(t)\in -Au(t)+(\lambdaup w_1+(1-\lambdaup)w_2)(t)\;\;\;\text{on }I,\\
u(0)=u_0.
\end{gathered}
\end{equation*}
In other words $u=S(\lambdaup w_1+(1-\lambdaup)w_2)$, i.e. the fiber $S^{-1}(\{u\})$ is convex.\par Consider a compact subset $C\subset E$ and a sequence $\z{w_n}\subset L^1(I,C)$ such that $w_n\xrightharpoonup[n\to\infty]{L^1(I,E)}w$. Since $-A$ generates an equicontinuous semigroup and the family $\x{w_n}$ is uniformly integrable, the image $S(\x{w_n})\subset C(I,E)$ is equicontinuous. This is the thesis of \cite[Theorem 2.3]{gutman}. In view of Pettis measurability theorem $E_0:=\overline{\spn}\bigcup_{n=1}^\infty(w_n(I))$ is a separable subspace of $E$. The arguments contained in the proof of part (b) of \cite[Lemmma 4]{bothe} justify the following estimate \[\beta\f(\x{S(w_n)(t)}\g)\<\int_0^t\beta_{E_0}\f(\x{w_n(s)}\g)\,{\rm d}s\] for $t\in I$. It should be stressed here that the veracity of this formula is completely independent of the geometrical properties of the dual space $E^*$, such as the uniform convexity assumed by the author of \cite{bothe}. As a consequence, we get that $\beta\f(\x{S(w_n)(t)}\g)=0$ for all $t\in I$. In view of the Arzel\`a theorem the sequence $\z{Sw_n}$ must be uniformly convergent to some $v$. The extra condition regarding the geometry of the dual space $E^*$ makes it possible to demonstrate that $S(w)=v$ (cf. \cite{tolst}). Therefore, operator $S$ meets condition $(\es_1)$.\par Let $R_0:=\sup_{t\in I}|U(t)x_0|$. To indicate a priori bounds on the solutions of \eqref{cauchy} consider $w\in N_F(S(w))$. It is easy to see that \[|S(w)(t)|\<|S(0)(t)|+|S(w)(t)-S(0)(t)|\<|U(t)x_0|+\int_0^t|w(s)|\,{\rm d}s\<R_0+||\mu||_1+\int_0^t\mu(s)|S(w)(s)|\,{\rm d}s\] for $t\in I$. Hence, \[||S(w)||\<(R_0+||\mu||_1)e^{||\mu||_1}=:M\] for any $w\in\fix(N_F\circ S)$, by the Gronwall inequality. Now, the standard trick allows us to assume that $||F(t,x)||^+\<\mu(t)(1+M)=\delta(t)$ a.e. on $I$ with $\delta\in L^1(I,\R{}_+)$. Otherwise we may always replace the right-hand side $F$ by $F(\cdot,r(\cdot))$ with $r\colon E\to D_E(0,M)\cap\overline{\co}D(A)$ being a retraction.\par Let $X:=L^\infty(I,\overline{\co}\,D(A))$. Clearly, $X$ is closed and convex in $L^\infty(I,E)$. Define $H\colon X\map X$ to be the superposition $H:=S\circ N_F$, where $N_F\colon X\map L^1(I,E)$ is the Nemytski\v{\i} operator corresponding to the righ-hand side $F$. According to the above observations on the growth of $F$, if $w\in L^1(I,E)$ then \[||S(w)-S(0)||\<|S(w)(0)-S(0)(0)|+\int_0^T|w(s)|\,{\rm d}s\<||\delta||_1=:R.\] This means, in particular, that $H(\partial B_{L^\infty}(S(0),R)\cap X)\subset D_{L^\infty}(S(0),R)\cap X$. Put $U:=B_{L^\infty}(S(0),R)\cap X$. It follows that operator $H\colon\overline{U}\map X$ satisfies boundary condition \eqref{LS} with $x_0:=S(0)\in U$.\par We claim that operator $H$ meets condition \eqref{monch3}. Let $M\subset\overline{U}$ be such that $\overline{M}\subset\overline{\co}\,\big(\{S(0)\}\cup H(M)\big)$. Whence, $\overline{M}(t)\subset\overline{\co}\,\big(\{S(0)(t)\}\cup H(M)(t)\big)$ a.e. on $I$. Since $N_F(M)$ is uniformly integrable, the image $S(N_F(M))$ is equicontinuous (cf. \cite[Theorem 2.3]{gutman}). Hence, the mapping $I\ni t\mapsto\beta(H(M)(t))\in\R{}_+$ must be continuous. Fix $t\in I$. There exists $\x{w_n^t}\subset N_F(M)$ such that \[\beta\big(\x{S(w_n^t)(t)}\big)=\max\{\,\beta(D)\colon D\subset H(M)(t)\;\text{countable}\}.\] Let $E_0$ be a closed and separable subspace of $E$ such that $\x{w_n^t(\tau)}\subset E_0$ for a.a. $\tau\in I$. Under assumption $(\F_6)$ the following estimate is easily verifiable: 
\begin{align*}
\beta_{E_0}(\x{w_n^t(\tau)})&\<2\,\beta(\x{w_n^t(\tau)})\<2\,\beta(F(\tau,\x{u_n^t(\tau)}))\<2\eta(\tau)\,\beta(\x{u_n^t(\tau)})\<2\eta(\tau)\,\beta(M(\tau))\\&\<2\eta(\tau)\,\beta(H(M)(\tau))
\end{align*}
for a.a. $\tau\in I$. As we have noticed previously, the proof of \cite[Lemmma 4]{bothe} constitutes a justification for the estimation \[\beta\big(\x{S(w_n^t)(\tau)}\big)\<\int_0^\tau\beta_{E_0}\big(\x{w_n^t(s)}\big)\,{\rm d}s.\] for $\tau\in I$. Hence, 
\begin{align*}
\beta(H(M)(t))&\<2\max\{\,\beta(D)\colon D\subset H(M)(t)\;\text{countable}\}=2\,\beta\big(\x{S(w_n^t)(t)}\big)\<2\int_0^t\beta_{E_0}\big(\x{w_n^t(s)}\big)\,{\rm d}s\\&\<4\int_0^t\eta(s)\beta(H(M)(s))\,{\rm d}s.
\end{align*} 
Since the time $t$ was chosen arbitrary, we infer that \[\beta(H(M)(t))\<4\int_0^t\eta(s)\beta(H(M)(s))\,{\rm d}s\] on $I$. Consequently, $\sup\limits_{t\in I}\beta(H(M)(t))=0$ and vertical slices $M(t)$ are relatively compact in $E$ for a.a $t\in I$. \par Combining the theses of Lemma \ref{usc2} and \ref{(3)} with the argument taken from the proof of Theorem \ref{solvability}, we infer that operator $H\colon\overline{U}\map X$ meets all the requirements imposed by Theorem \ref{fixed}. Therefore, $H$ possesses a fixed point $u\in\overline{U}\cap C(I,E)$. Clearly, $u$ is an integral solution to \eqref{cauchy}.
\end{proof}
\begin{remark}
{\em Note that our result corresponds exactly to the content of \cite[Theorem 2]{bothe}, except for the assumption about the geometry of the dual space $E^*$. As we have seen it was enough to assume that $E^*$ is strictly convex. The counter-example given in \cite{bothe} $($cf. \cite[Example 1]{bothe}$)$ shows that this geometric condition on $E^*$ cannot be removed.}
\end{remark}

\noindent{\bf Example 2.} In this example we consider the following semilinear nonlocal Cauchy problem
\begin{equation}\label{inclusion}
\begin{gathered}
\dot{x}(t)\in Ax(t)+F(t,x(t))\;\;\;\text{on } I,\\
x(0)=g(x),
\end{gathered}
\end{equation}
where the linear operator $A\colon D(A)\subset E\to E$ is closed densely defined and $g\colon C(I,E)\to E$ is continuous. It is well known that if $A$ poses the infinitesimal generator of a $C_0$-semigroup $\{U(t)\}_{t\geqslant 0}$ of bounded linear operators on $E$, then there are constants $M>0$ and $\omega\in\mathbb{R}$ such that $||U(t)||_{\mathcal L}\<Me^{\omega t}$ for any $t\geqslant 0$. Renorming the Banach space $E$ in an appropriate way one can achieve that $M=1$.\par Let us remind that a Banach space $E$ has property ${\mathcal S}$ if for all separable subspaces $X\subset E$ there exists a separable subspace $Y$ of $E$ with $X\subset Y$ and a continuous linear projection $P\colon E\to Y$ of norm $1$. In particular, weakly compactly generated Banach spaces have property ${\mathcal S}$.

\begin{theorem}\label{semilinear}
Let $A$ be an infinitesimal generator of a $C_0$-semigroup $\{U(t)\}_{t\geqslant 0}$. Assume that $F\colon I\times E\map E$ satisfies $(\F_1)$--$(\F_3)$ together with $(\F_4')$ and $(\F_6)$, whereas $g\colon C(I,E)\to E$ is $L$-Lipschitz and there is $k>0$ such that
\begin{equation}\label{k}
\beta(g(\Omega))\<k\sup_{t\in I}\beta(\Omega(t))
\end{equation}
for every bounded $\Omega\subset C(I,E)$. Assume also that there exists a constant $r>0$ such that 
\begin{equation}\label{R}
0<||\mu||_1<M^{-1}\ln\f(\frac{1+r}{1+M(Lr+|g(0)|)}\g)
\end{equation}
with $M:=\sup\limits_{t\in I}||U(t)||_{\mathcal L}$. Then the boundary value problem \eqref{inclusion} possesses a mild solution in each of the following cases:
\begin{itemize}
\item[(i)] $M(k+||\eta||_1)<1$ if $E$ is has property ${\mathcal S}$,
\item[(ii)] $M(k+4||\eta||_1)<1$ if $E$ is an arbitrary Banach space.
\end{itemize}
\end{theorem}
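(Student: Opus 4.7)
The plan is to recast \eqref{inclusion} as a Hammerstein-type fixed-point problem $x\in(K\circ N_F)(x)$ in $L^\infty(I,E)$, in the spirit of Theorem~\ref{solvability}, taking $p=\infty$, $q=1$ and $X:=C(I,E)$ (which is closed and convex in $L^\infty(I,E)$). I would define the mild-solution operator $K\colon L^1(I,E)\to C(I,E)$ by declaring $K(w)$ to be the unique fixed point of the map $\Phi_w\colon x\mapsto U(\cdot)g(x)+\int_0^\cdot U(\cdot-s)w(s)\,{\rm d}s$. A direct inspection of \eqref{R} shows that the mere existence of an admissible radius $r$ forces $ML<1$ (the argument of the logarithm exceeds $1$ only when $r(1-ML)>M|g(0)|$), which in turn makes $\Phi_w$ a strict contraction on $C(I,E)$ with constant $ML$ and renders $K$ well-defined and single-valued.

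Verifying the hypotheses on $K$ is then largely routine. Subtracting two instances of the fixed-point equation yields $\|K(w_1)-K(w_2)\|_\infty\leqslant\frac{M}{1-ML}\|w_1-w_2\|_1$, which is $(\es_2)$. The fibre $K^{-1}(\{x\})$ is cut out of $L^1(I,E)$ by the single affine constraint $\int_0^t U(t-s)w(s)\,{\rm d}s=x(t)-U(t)g(x)$ and is therefore convex. For $(\es_1)$, which for a univalent $K$ reduces to ordinary continuity, given $w_n\rightharpoonup w$ in $L^1(I,E)$ with $\{w_n(\cdot)\}$ taking values in a compact $C\subset E$ a.e., a Vitali/Arzel\`a argument based on the compactness of $\{U(\tau)c\colon\tau\in I,\;c\in C\}$ and the equicontinuity of semigroup convolutions yields $\int_0^\cdot U(\cdot-s)w_n(s)\,{\rm d}s\to\int_0^\cdot U(\cdot-s)w(s)\,{\rm d}s$ uniformly, and Lipschitzness of $g$ then propagates this to $K(w_n)\to K(w)$ in $C(I,E)$.

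A Gronwall inequality applied to $|x(t)|\leqslant M(L\|x\|_\infty+|g(0)|)+M\|\mu\|_1+M\int_0^t\mu(s)|x(s)|\,{\rm d}s$ supplies the a priori bound $\|x\|_\infty\leqslant r$ for every $x\in\fix(K\circ N_F)$, with $r$ furnished by \eqref{R}. Setting $U:=B_{L^\infty}(0,r+1)\cap X$ and $x_0:=0\in U$, the Leray--Schauder condition \eqref{LS} is then automatic. Admissibility of $H:=K\circ N_F\colon\overline U\to X$ is obtained exactly as in the proof of Theorem~\ref{solvability}: $H(u)=K(N_F(u))$ is the continuous image of the convex (hence acyclic) weakly compact set $N_F(u)\subset L^1(I,E)$, so it is compact and acyclic; the upper semicontinuity of $H$ is proved on the lines of Lemma~\ref{usc2}, with $(\F_6)$ playing the role played there by $(\F_5)$ in producing a.e.\ relative compactness of $\{w_n(t)\}$ from a.e.\ convergence of a subsequence $u_{k_n}(t)\to u(t)$.

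The hardest step is the verification of the M\"onch-type condition \eqref{monch}. Let $\Omega\subset\overline U$ satisfy $\Omega\subset\overline{\co}(\{0\}\cup H(\Omega))$. Uniform integrability of $N_F(\Omega)$ (via $(\F_4')$) renders $H(\Omega)$ equicontinuous in $C(I,E)$, so by Arzel\`a--Ascoli it suffices to show that $\phi(t):=\beta(H(\Omega)(t))$ vanishes identically. Combining the integral representation $H(\Omega)(t)\subset U(t)g(H(\Omega))+\int_0^t U(t-s)N_F(\Omega)(s)\,{\rm d}s$ with condition \eqref{k}, assumption $(\F_6)$, and the standard MNC estimate for semigroup convolutions (cf.\ \cite[Lemma 4]{bothe} and the proof of Example 1), one arrives at
\[
\phi(t)\leqslant Mk\sup_{s\in I}\phi(s)+C_EM\int_0^t\eta(s)\phi(s)\,{\rm d}s,
\]
where $C_E=1$ if $E$ has property $\mathcal S$ (case (i)) and $C_E=4$ in the general case (case (ii), exactly as in the argument of Example 1 where $\beta_{E_0}\leqslant 2\beta$ and passage to countable subsets each contribute a factor of $2$). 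Taking the supremum in $t$ and invoking $M(k+C_E\|\eta\|_1)<1$ forces $\phi\equiv 0$; Theorem~\ref{fixed} then delivers a fixed point of $H$, which is the desired mild solution. The dichotomy between (i) and (ii) is precisely the reflection of this constant $C_E$ in the underlying MNC estimate.
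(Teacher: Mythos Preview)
Your approach is essentially the paper's: both define $K$ as the mild-solution operator for \eqref{equ3}, verify $(\es_2)$ and convexity of fibres identically, establish $(\es_1)$ via an Arzel\`a--Ascoli argument, and handle the M\"onch condition through the inequality $\sup_t\beta(H(\Omega)(t))\le M(k+C_E\|\eta\|_1)\sup_t\beta(H(\Omega)(t))$ with $C_E\in\{1,4\}$ coming from \cite[Theorem 3.12]{kunze}. The one real difference is the treatment of \eqref{LS}: the paper rewrites \eqref{R} as the Bihari condition $\int_0^T\mu(s)\,{\rm d}s<\int_{M(Lr+|g(0)|)}^r\frac{{\rm d}s}{M(1+s)}$ and invokes the construction in \cite[Theorem 3.3]{zhu} to produce an $H$-invariant set $W_0\subset C(I,E)$ with nonempty convex interior, so that Rothe's condition (Remark~\ref{rem}(ii)) yields \eqref{LS}; you instead extract a direct a~priori bound. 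Your route is more self-contained, but two points need tightening. First, \eqref{LS} concerns solutions of $x\in\lambda H(x)$ for all $\lambda\in(0,1)$, not just fixed points of $H$; you must check the Gronwall bound is uniform in $\lambda$ (it is, since writing $x=\lambda v$ with $v=K(w)$, $w\in N_F(x)$, gives $|x(t)|\le ML\|x\|_\infty+\lambda M|g(0)|+\lambda M\!\int_0^t\mu(s)(1+|x(s)|)\,{\rm d}s$, which only improves as $\lambda$ decreases). Second, to conclude $\|x\|_\infty<r$ rather than merely \emph{some} bound, apply Gronwall to $1+|x(t)|$; the splitting you wrote, with the separate $M\|\mu\|_1$ term, is slightly lossy and does not in general recover the exact constant $r$.

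One further caution on $(\es_1)$: the semigroup $\{U(t)\}$ is not assumed equicontinuous, so ``equicontinuity of semigroup convolutions'' is not free. The paper handles this by first showing $\beta(\{K(f_n)(t)\})=0$ for each $t$ (using \eqref{k} and $Mk<1$), and \emph{then} deducing equicontinuity of $\{K(f_n)\}$ from the strong continuity of $U(\cdot)$ on the resulting relatively compact sections; your sketch should follow the same order.
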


\begin{proof}
Define $K\colon L^1(I,E)\to C(I,E)$ to be the map which assigns to each $f\in L^1(I,E)$ the unique mild solution of the following problem 
\begin{equation}\label{equ3}
\begin{gathered}
\dot{x}(t)=Ax(t)+f(t),\;\text{a.e. on } I,\\
x(0)=g(x).
\end{gathered}
\end{equation}
$K$ is a well defined single-valued mapping. Indeed, suppose $x,y$ are solutions of \eqref{equ3}. Then \[|x(t)-y(t)|=\f|U(t)g(x)+\int_0^tU(t-s)f(s)\,{\rm d}s-U(t)g(y)-\int_0^tU(t-s)f(s)\,{\rm d}s\g|\<M|g(x)-g(y)|\] and consequently $||x-y||\<ML||x-y||$. From \eqref{R} it follows that $ML<1$ i.e., $x=y$.\par  Let $f,h\in L^1(I,E)$. Then 
\begin{align*}
|K(f)(t)-K(h)(t)|&\<|U(t)g(K(f))+U(t)g(K(h))|+\f|\int_0^tU(t-s)f(s)\,{\rm d}s-\int_0^tU(t-s)h(s)\,{\rm d}s\g|\\&\<ML||K(f)-K(h)||+M\int_0^t|f(s)-h(s)|\,{\rm d}s
\end{align*}
for $t\in I$. Whence \[||K(f)-K(g)||\<\frac{M}{1-ML}||f-g||_1,\] i.e. operator $K$ is Lipschitz. \par We will show that operator $K$ satisfies also condition $(\es_1)$. To see this consider a compact subset $C\subset E$ and a sequence $\z{f_n}\subset L^1(I,C)$ such that $f_n\xrightharpoonup[n\to\infty]{L^1(I,E)}f$. In the context of \cite[Theorem 3.12(c)]{kunze} and \eqref{k} one may estimate 
\begin{align*}
\beta\f(\x{K(f_n)(t)}\g)&=\beta\f(\f\{U(t)g(K(f_n))+\int_0^tU(t-s)f_n(s)\,{\rm d}s\g\}_{n=1}^\infty\g)\<\beta\f(\f\{U(t)g(K(f_n))\g\}_{n=1}^\infty\g)\\&+\beta\f(\f\{\int_0^tU(t-s)f_n(s)\,{\rm d}s\g\}_{n=1}^\infty\g)\<Mk\sup_{\tau\in I}\beta\f(\{K(f_n)(\tau)\}_{n=1}^\infty\g)+4M\overline{\int_0^t}\beta\f(\x{f_n(s)}\g)\,{\rm d}s\\&\<Mk\sup_{\tau\in I}\beta\f(\f\{K(f_n)(\tau)\g\}_{n=1}^\infty\g)+4Mt\beta(C)
\end{align*}
for $t\in I$. Eventually, \[\sup_{t\in I}\beta\f(\f\{K(f_n)(t)\g\}_{n=1}^\infty\g)\<Mk\sup_{t\in I}\beta\f(\f\{K(f_n)(t)\g\}_{n=1}^\infty\g),\] which means that $\beta\f(\f\{K(f_n)(t)\g\}_{n=1}^\infty\g)=0$ for each $t\in I$.\par Fix $\tau\in I$ and $\eps>0$. There is $\delta>0$ such that $2M||C||^+\delta<\frac{\eps}{3}$. Let $s\in I$ be such that $|\tau-s|<\delta$. Since $\x{K(f_n)(s)}$ is relatively compact, the family $\x{U(\cdot)K(f_n)(s)}$ is equicontinuous i.e., \[\sup_\n|U(t-s)K(f_n)(s)-U(\tau-s)K(f_n)(s)|\xrightarrow[t\to\tau]{}0.\] Clearly there is $\delta_1\in(0,\delta)$ such that, for any $\n$,
\begin{align*}
|K(f_n)(t)-K(f_n)(\tau)|&=\f|U(t-s)K(f_n)(s)-U(\tau-s)K(f_n)(s)+\int_s^t\!U(t-z)f_n(z)\,{\rm d}z-\int_s^\tau\!U(\tau-z)f_n(z)\,{\rm d}z\g|\\&\<|U(t-s)K(f_n)(s)-U(\tau-s)K(f_n)(s)|+N||C||^+|t-s|+N||C||^+|\tau-s|<\eps
\end{align*}
for $t\in I$ with $|t-\tau|<\delta_1$. In other words, $\x{K(f_n)}$ is equicontinuous. In view of the Arzel\`a theorem, we see that (passing to a subsequence if necessary) $K(f_n)\xrightarrow[n\to\infty]{C(I,E)}y$. Notice that $U(\cdot)g(K(f_n))\xrightarrow[n\to\infty]{C(I,E)}U(\cdot)g(y)$. Since the linear part of $K(f_n)$ tends weakly to $\int_0^\cdot U(\cdot-s)f(s)\,{\rm d}s$, one sees that \[y(t)\xleftharpoonup[n\to\infty]{E}K(f_n)(t)\xrightharpoonup[n\to\infty]{E}U(t)g(y)+\int_0^tU(t-s)f(s)\,{\rm d}s,\;\;\text{for }t\in I.\] Hence, $y=K(f)$, i.e. $K\colon L^1(I,C)\to C(I,E)$ is strongly continuous.
\par We claim that $K$ has convex fibers. Fix $\lambdaup\in(0,1)$. If $f,h\in K^{-1}(\{x\})$, then $K(f)=K(h)$. From this it follows that $\int_0^tU(t-s)(f-h)(s)\,{\rm d}s=0$ for $t\in I$. Hence,
\begin{align*}
|K(\lambdaup f+(1-\lambdaup)h)(t)-K(h)(t)|&=\f|U(t)g(K(\lambdaup f+(1-\lambdaup)h))-U(t)g(K(h))+\lambdaup\int_0^tU(t-s)(f-h)(s)\,{\rm d}s\g|\\&\<M|g(K(\lambdaup f+(1-\lambdaup)h))-g(K(h))|\<ML\,||K(\lambdaup f+(1-\lambdaup)h)-K(h)||
\end{align*}
for $t\in I$. Consequently, $K(\lambdaup f+(1-\lambdaup)h)=K(h)$ i.e., $\lambdaup f+(1-\lambdaup)h\in K^{-1}(\{x\})$.\par Condition \eqref{R} is nothing more than \[\int\limits_0^T\mu(s)\,{\rm d}s<\int\limits_{M(Lr+|g(0)|)}^r\frac{\rm{d}s}{M(1+s)}.\] The latter entails existence of an $(K\circ N_F)$-invariant subset $W_0\subset C(I,E)$, which has a nonempty and convex interior (see proof of \cite[Theorem 3.3]{zhu}). Therefore, the operator $H\colon\overline{\text{Int}W_0}\map C(I,E)$ satisfies condition \eqref{LS} (cf. Remark \ref{rem}(ii)).\par Let $\Omega\subset W_0$ be such that $\overline{\Omega}\subset\overline{\co}\,\big(\{x_0\}\cup H(\Omega)\big)$ for some $x_0\in\text{Int}W_0$. Let $\w{\beta}$ be a measure of non-compactness on $E$ generated by the sequential Hausdorff MNC i.e., \[\w{\beta}(\Omega):=\max\f\{\beta(D)\colon D\subset\Omega\text{ denumerable}\g\}.\] Clearly, $\w{\beta}$ is nonsingular and monotone. Fix $t\in I$. Assume that $\w{\beta}(H(\Omega)(t))=\beta\big(\x{K(f_n)(t)}\big)$ for some $\x{K(f_n)}\subset H(\Omega)$. By virtue of \cite[Theorem 3.12]{kunze} one gets
\begin{align*}
\w{\beta}(H(\Omega)(t)&=\beta\big(\x{K(f_n)(t)}\big)=\beta\f(\f\{U(t)g(K(f_n))+\int_0^tU(t-s)f_n(s)\,{\rm d}s\g\}_{n=1}^\infty\g)\\&\<M\beta\f(g\f(\x{K(f_n)}\g)\g)+cM\int_0^t\beta\f(\x{f_n(s)}\g)\,{\rm d}s\\&\<Mk\sup_{\tau\in I}\beta\f(\x{K(f_n)(\tau)}\g)+cM\int_0^t\eta(s)\sup_{\tau\in I}\w{\beta}(\Omega(\tau))\,{\rm d}s\\&\<Mk\sup_{\tau\in I}\w{\beta}(H(\Omega)(\tau))+cM\int_0^t\eta(s)\,{\rm d}s\sup_{\tau\in I}\w{\beta}(H(\Omega)(\tau)),
\end{align*}
where $c=1$ in case $E$ has property ${\mathcal S}$ and $c=4$ otherwise. Since $t$ was chosen arbitrary, \[\sup_{t\in I}\w{\beta}(H(\Omega)(t))\<M(k+c||\eta||_1)\sup_{t\in I}\w{\beta}(H(\Omega)(t)).\] Therefore, $\sup\limits_{t\in I}\w{\beta}(\Omega(t))=0$. Clearly, $\w{\beta}^{-1}(\{0\})$ is contained in the family of relatively compact subsets of the space $E$. This means that $\Omega(t)$ is relatively compact for every $t\in I$. In other words, condition \eqref{monch3} is satisfied. \par In view of Theorem \ref{solvability} the set-valued operator $H\colon\overline{\text{Int}W_0}\map C(I,E)$ is admissible. From Theorem \ref{fixed} it follows that $H$ has at least one fixed point. This fixed point constitutes a solution of the nonlocal Cauchy problem \eqref{inclusion}. 
\end{proof}
\begin{remark}
{\em The preceding result may be treated as a refinement of \cite[Theorem 3.8]{zhu}. Specifically, we did not assume neither equicontinuity of the semigroup $\{U(t)\}_{t\geqslant 0}$ nor that $F(t,\cdot)$ must be upper semicontinuous.}
\end{remark}

\noindent{\bf Example 3.} For the third example we consider the following so-called Hammerstein integral inclusion:
\begin{equation}\label{hammerstein}
x(t)\in h(t)+\int_0^Tk(t,s)F(s,x(s))\,{\rm d}s\;\;\text{a.e. on }I
\end{equation}
with $h\in L^p(I,E)$. We shall assume the following hypotheses about the kernel mapping $k\colon I^2\to{\mathcal L}(E)$ :
\begin{itemize}
\item[$(\ka_1)$] the function $k\colon I\times I\to{\mathcal L}(E)$ is strongly measurable in a product measure space,
\item[$(\ka_2)$] for every $t\in I$, $k(t,\cdot)\in L^r(I,{\mathcal L}(E))$ with $r\in(1,\infty]$ being the conjugate exponent of $q$, i.e. $q^{-1}+r^{-1}=1$, 
\item[$(\ka_3)$] the function $I\ni t\mapsto k(t,\cdot)\in L^r(I,{\mathcal L}(E))$ belongs to $L^p(I,L^r(I,{\mathcal L}(E)))$.
\end{itemize}
\begin{theorem}\label{hamth}
Let $1\<q\<p<\infty$. Assume that $k\colon I^2\to{\mathcal L}(E)$ satisfies $(\ka_1)$--$(\ka_3)$, while $F\colon I\times E\map E$ fulfills $(\F_1)$--$(\F_4)$ together with
\begin{itemize}
\item[$(\F_6')$] for every closed separable subspace $E_0$ of $E$ there exists a function $\eta_{E_0}\in L^\frac{pq}{p-q}(I,\R{})$ such that for all bounded subsets $\Omega\subset E_0$ and for a.a. $t\in I$ the inequality holds \[\beta_{E_0}(F(t,\Omega)\cap E_0)\<\eta_{E_0}(t)\beta_{E_0}(\Omega).\]
\end{itemize}
If there is an $R>0$ such that
\begin{equation}\label{ls2}
||\,||k(t,\cdot)||_r\,||_p\f(||b||_q+c\f(R+||h||_p\g)^\frac{p}{q}\g)\<R
\end{equation}
and
\begin{equation}\label{hamcond}
||\,||k(t,\cdot)||_r\,||_p||\eta_{E_0}||_{\frac{pq}{p-q}}<1,
\end{equation}
then the integral inclusion \eqref{hammerstein} has at least one $p$-integrable solution.
\end{theorem}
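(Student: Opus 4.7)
The plan is to recast \eqref{hammerstein} as the Hammerstein inclusion $x\in(K\circ N_F)(x)$ in $L^p(I,E)$ and invoke Theorem \ref{solvability} in its case (ii), with $K\colon L^q(I,E)\to L^p(I,E)$ defined by $K(w)(t):=h(t)+\int_0^T k(t,s)w(s)\,{\rm d}s$. By $(\ka_1)$--$(\ka_3)$ and H\"older's inequality one immediately obtains $||K(w_1)-K(w_2)||_p\<L\,||w_1-w_2||_q$ with $L=||\,||k(t,\cdot)||_r\,||_p$, so that assumption $(\es_2)$ is verified; since $K$ is single-valued, its values are singletons (compact, acyclic) and its fibers $K^{-1}(\{v\})$ are affine, hence convex. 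Applying $(\F_6')$ to $\Omega=\{x\}$ yields $\beta_{E_0}(F(t,x)\cap E_0)=0$, which combined with $(\F_3)$ produces the quasicompactness requirement $(\F_5)$.

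For $(\es_1)$, fix a compact $C\subset E$ and a sequence $(w_n)\subset L^q(I,C)$ with $w_n\rightharpoonup w$. For each $t\in I$ the functional $w\mapsto\int_0^T k(t,s)w(s)\,{\rm d}s$ is weak-to-weak continuous from $L^q(I,E)$ to $E$ (its adjoint acts on $x^*\in E^*$ by $s\mapsto k(t,s)^*x^*\in L^r(I,E^*)$), hence $K(w_n)(t)\rightharpoonup K(w)(t)$; the vector-integral MNC estimate together with the compactness of $\{w_n(s)\}\subset C$ forces $\{K(w_n)(t)\}$ relatively compact in $E$, so $K(w_n)(t)\to K(w)(t)$ in norm; dominated convergence with majorant $t\mapsto|h(t)|+||k(t,\cdot)||_r\sup_C|\cdot|$ upgrades this to $L^p$-convergence. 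The main remaining step is verifying \eqref{monch3} with $x_0:=K(0)=h$. For a countable $M\subset\overline{U}$ with $\overline{M}\subset\overline{\co}\,(\{h\}\cup H(M))$, Pettis measurability furnishes a closed separable subspace $E_0\subset E$ absorbing almost all values of $M$, of a countable dense family of selections of $N_F(M)$, and of $h$. Writing $\phi(t):=\beta_{E_0}(H(M)(t))$, the vector-integral MNC estimate combined with $(\F_6')$ and the inclusion $\beta_{E_0}(M(s))\<\phi(s)$ yields
\[
\phi(t)\<\int_0^T||k(t,s)||_{\mathcal L}\,\eta_{E_0}(s)\,\phi(s)\,{\rm d}s.
\]
The arithmetic identity $\tfrac{1}{p}+\tfrac{1}{r}+\tfrac{p-q}{pq}=1$ permits a three-factor H\"older bound $\phi(t)\<||k(t,\cdot)||_r\,||\eta_{E_0}||_{pq/(p-q)}\,||\phi||_p$; taking the $L^p$-norm in $t$ and invoking \eqref{hamcond} forces $||\phi||_p=0$, whence $M(t)$ is relatively compact for a.a.\ $t$.

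Finally, \eqref{ls2} is precisely the radius inequality \eqref{radius1} with $x_0=K(0)=h$, so setting $X:=L^p(I,E)$ and $U:=B_{L^p}(h,R)\cap X$ places all hypotheses of Theorem \ref{solvability}(ii) in force and delivers a fixed point of $H=K\circ N_F$ inside $D_{L^p}(h,R)$, which is the sought $p$-integrable solution of \eqref{hammerstein}. I expect the most delicate step to be the coordination between the separable-subspace localization required by $(\F_6')$ and the constant appearing in the vector-integral MNC estimate, which must remain equal to $1$ in order to be compatible with the sharp hypothesis \eqref{hamcond}.
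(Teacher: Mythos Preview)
Your overall strategy is the paper's, and most of the ingredients are correctly identified: $K$ is affine hence has convex fibers, $(\es_2)$ holds with $L=\|\,\|k(t,\cdot)\|_r\,\|_p$, and your deduction of $(\F_5)$ from $(\F_6')$ is sound. Your verification of $(\es_1)$ is in fact more economical than the paper's: the paper proves $p$-equiintegrability of $\{K(w_n)\}$ via translation continuity of $t\mapsto k(t,\cdot)$ and invokes a Kolmogorov--Riesz type criterion, whereas your route through pointwise weak convergence, pointwise compactness (Heinz), and dominated convergence is shorter and correct.

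There is, however, a genuine gap in the compactness step. You announce that you will apply Theorem~\ref{solvability}(ii), which requires \eqref{monch3} for \emph{arbitrary} $M\subset\overline{U}$, yet your argument begins ``For a countable $M\subset\overline{U}$\dots''. The countability is essential to your proof: without it you cannot invoke Pettis measurability to produce a single separable $E_0$ absorbing all the values, and $(\F_6')$ becomes unusable. The paper does not verify \eqref{monch3} either; instead it exploits the fact that $K$ is affine, so that $H=K\circ N_F$ has \emph{convex} values, and applies Theorem~\ref{fixed2}(ii), whose M\"onch condition \eqref{chnom2} involves only countable $\Omega$. Concretely, the paper takes countable $M\subset\co(\{h\}\cup H(\overline{M}))$, extracts a countable $\{v_n\}\subset H(\overline{M})$ with $M\subset\co(\{h\}\cup\{v_n\})$, and thereby obtains the pointwise inclusion $M(t)\subset\co(\{h(t)\}\cup\{v_n(t)\})$ a.e.\ directly (no closure on the convex hull), which your formulation with $\overline{M}\subset\overline{\co}(\{h\}\cup H(M))$ does not deliver without further work.

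You are right that the constant in the vector-integral MNC estimate is the delicate point. The general Heinz bound carries a factor~$2$, which would force the stronger hypothesis $2\|\,\|k(t,\cdot)\|_r\,\|_p\|\eta_{E_0}\|_{pq/(p-q)}<1$. The paper obtains constant~$1$ by invoking \cite[Lemma~4.3]{precup}, which applies precisely to operators of the form \eqref{opK} with the scalar kernel $\tilde k(t,s):=\|k(t,s)\|_{\mathcal L}$; this yields $\beta_{E_0}\bigl(\{\int_0^T k(t,s)w_n(s)\,ds\}\bigr)\le\int_0^T\|k(t,s)\|_{\mathcal L}\,\beta_{E_0}(\{w_n(s)\})\,ds$ exactly as you need. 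So your instinct is correct, but the resolution is an external lemma, not an internal refinement.
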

\begin{proof}
Define the external operator $K\colon L^q(I,E)\to L^p(I,E)$ in the following way 
\begin{equation}\label{opK}
K(w)(t):=h(t)+\int_0^Tk(t,s)w(s)\,{\rm d}s,\;\;\;t\in I.
\end{equation}
Since, $K$ is affine, it has convex fibers. It is clear that \[||K(w_1)-K(w_2)||_p\<||\,||k(t,\cdot)||_r\,||_p||w_1-w_2||_q\] for any $w_1,w_2\in L^q(I,E)$. Thus, $K$ meets condition $(\es_2)$.\par To see that $(\es_1)$ is also satisfied, consider a compact subset $C\subset E$ and a sequence $\z{w_n}\subset L^q(I,C)$ such that $w_n\xrightharpoonup[n\to\infty]{L^q(I,E)}w$. Observe that \[\sup_\n|K(w_n)(t)|\<|h(t)|+||k(t,\cdot)||_r\sup_\n||w_n||_q\;\;\;\text{a.e. on }I,\] which means that the family $\x{K(w_n)}$ is $p$-integrably bounded. Moreover, \[\sup_\n|k(t,s)w_n(s)|\<||k(t,s)||_{\mathcal L}||C||^+\;\;\;\text{a.e. on }I\] and \[\beta\f(\x{k(t,s)w_n(s)}\g)\<||k(t,s)||_{\mathcal L}\beta\f(\x{w_n(s)}\g)=0\;\;\;\text{a.e. on }I.\] Hence, by \cite[Corollary 3.1]{heinz}, we have \[\beta\f(\x{K(w_n)(t)}\g)=\beta\f(\f\{h(t)+\int_0^Tk(t,s)w_n(s)\,{\rm d}s\g\}_{n=1}^\infty\g)\<\beta\f(\f\{\int_0^Tk(t,s)w_n(s)\,{\rm d}s\g\}_{n=1}^\infty\g)\<2\int_0^T0\,{\rm d}t=0\] for a.a. $t\in I$. Employing \cite[Corollary 3.1]{heinz} again one obtains:
\[\beta\f(\f\{\int_s^tK(w_n)(\tau)\,{\rm d}\tau\g\}_{n=1}^\infty\g)\<2\int_s^t0\,{\rm d}\tau=0\] for every $0<s<t<T$. In other words the sets $\f\{\int_s^tK(w_n)(\tau)\,{\rm d}\tau\g\}_{n=1}^\infty$ are relatively compact in $E$. On the other hand, the following estimate holds
\begin{align*}
\int_0^{T-h}|K(w_n)(t+h)-K(w_n)(t)|^p\,{\rm d}t&\<\int_0^{T-h}\f(\int_0^T||k(t+h,s)-k(t,s)||_{\mathcal L}|w_n(s)|\,{\rm d}s\g)^p\,{\rm d}t\\&\<\sup_\n||w_n||_q^p\int_0^{T-h}||k(t+h,\cdot)-k(t,\cdot)||_r^p\,{\rm d}t.
\end{align*}
Bearing in mind that the singleton set $\{t\mapsto k(t,\cdot)\}$ is compact in $L^p(I,L^r(I,{\mathcal L}(E)))$, the following convergence is self-evident \[\sup_\n||w_n||_q^p\int_0^{T-h}||k(t+h,\cdot)-k(t,\cdot)||_r^p\,{\rm d}t\xrightarrow[h\to 0^+]{}0.\] Therefore, the set $\x{K(w_n)}$ is $p$-equiintegrable. In view of \cite[Theorem 2.3.6]{papa}, $K(w_n)\xrightarrow[n\to\infty]{L^p(I,E)}y$, up to a subsequence. At the same time
\[y(t)\xleftharpoonup[n\to\infty]{E}K(w_n)(t)\xrightharpoonup[n\to\infty]{E}h(t)+\int_0^Tk(t,s)w(s)\,{\rm d}s,\;\;\text{for }t\in I.\] Eventually, $K(w_n)\xrightarrow[n\to\infty]{L^p(I,E)}K(w)$.\par Let $R>0$ be matched according to \eqref{ls2} and $H\colon D_{L^p}(h,R)\map L^p(I,E)$ be defined as usual as $H:=K\circ N_F$. Take $u\in D_{L^p}(h,R)$ and $w\in N_F(u)$. Then 
\begin{equation}\label{K(w)}
\begin{split}
|K(w)(t)|&\<|h(t)|+\int_0^T||k(t,s)||_{\mathcal L}|w(s)|\,{\rm d}s\<|h(t)|+\int_0^T||k(t,s)||_{\mathcal L}\f(b(s)+c|u(s)|^\frac{p}{q}\g)\,{\rm d}s\\&\<|h(t)|+||k(t,\cdot)||_r\f(||b||_q+c||u||_p^\frac{p}{q}\g)\<|h(t)|+||k(t,\cdot)||_r\f(||b||_q+c\f(R+||h||_p\g)^\frac{p}{q}\g),
\end{split}
\end{equation}
i.e. the range of the operator $H$ is uniformly $p$-integrable. Note that the assumption \eqref{ls2} is nothing but condition \eqref{radius1} formulated in the context of the Hammerstein inclusion \eqref{hammerstein}.\par Let $M\subset D_{L^p}(h,R)$ be a countable subset of $\co(\{h\}\cup H(\overline{M}))$. Then there is a subset $\x{v_n}\subset H(\overline{M})$ such that $M\subset\co\f(\{h\}\cup\x{v_n}\g)$. Assume that $v_n=K(w_n)$ and $w_n\in N_F(u_n)$ with $u_n\in\overline{M}$. In view of the Pettis measurability theorem there exists a closed linear separable subspace $E_0$ of $E$ such that \[\{h(t)\}\cup\x{v_n(t)}\cup\x{w_n(t)}\cup\f\{\int_0^Tk(t,s)w_n(s)\,{\rm d}s\g\}_{n=1}^\infty\cup\overline{M}(t)\subset E_0\;\;\text{for a.a. }t\in I.\] Let $\mu\in L^p(I,\R{})$ be such that $\mu(t):=|h(t)|+||k(t,\cdot)||_r\f(||b||_q+c\f(R+||h||_p\g)^\frac{p}{q}\g)$. From \eqref{K(w)} it follows that $|v_n(t)|\<\mu(t)$ a.e. on $I$. Since $\overline{M}(t)\subset\overline{\co}\f(\{h(t)\}\cup\x{v_n(t)}\g)$, we infer that $|u_n(t)|\<\mu(t)$ a.e. on $I$ for every $\n$. Eventually, \[\sup_\n|w_n(t)|\<\sup_\n||F(t,u_n(t))||^+\<b(t)+c\sup_\n|u_n(t)|^\frac{p}{q}\<b(t)+c\mu(t)^\frac{p}{q}\;\;\text{a.e. on }I,\] i.e. the family $\x{w_n}$ is $q$-integrably bounded. Observe that operator $K$ meets assumptions of \cite[Lemma 4.3]{precup}. Particularly, condition (S1) is satisfied for the kernel $\w{k}\colon I\to\R{}_+$ such that $\w{k}(t,s):=||k(t,s)||_{\mathcal L}$. Therefore, 
\begin{equation}\label{v_n}
\beta_{E_0}\f(\x{v_n(t)-h(t)}\g)=\beta_{E_0}\f(\f\{\int_0^Tk(t,s)w_n(s)\,{\rm d}s\g\}_{n=1}^\infty\g)\<\int_0^T||k(t,s)||_{\mathcal L}\beta_{E_0}\f(\x{w_n(s)}\g)\,{\rm d}s.
\end{equation}
Under assumption $(\F_6')$ the following estimate holds:
\begin{align*}
\beta_{E_0}\f(\x{w_n(s)}\g)&\<\beta_{E_0}\f(F\f(s,\x{u_n(s)}\g)\cap E_0\g)\<\eta_{E_0}(s)\beta_{E_0}\f(\x{u_n(s)}\g)\<\eta_{E_0}(s)\beta_{E_0}\f(M(s)\g)\\&\<\eta_{E_0}(s)\beta_{E_0}\f(\x{v_n(s)-h(s)}\g).
\end{align*}
Using the latter in the context of \eqref{v_n}, one gets \[\f\Arrowvert\beta_{E_0}\f(\x{v_n(\cdot)-h(\cdot)}\g)\g\Arrowvert_p\<||\,||k(t,\cdot)||_r\,||_p||\eta_{E_0}||_\frac{pq}{p-q}\f\Arrowvert\beta_{E_0}\f(\x{v_n(\cdot)-h(\cdot)}\g)\g\Arrowvert_p.\] Now, assumption \eqref{hamcond} entails $\beta_{E_0}\f(\x{v_n(t)-h(t)}\g)=0$ a.e. on $I$. This means that vertical slices $\x{v_n(t)}$ are relatively compact for a.a. $t\in I$. Consequently, $M(t)$ is relatively compact a.e. on $I$. In this regard, the following condition is met 
\begin{equation}\label{munch}
\f(M\subset\overline{U}\;\text{countable and }M\subset\co\,\f(\{x_0\}\cup H(\overline{M})\g)\g)\Longrightarrow M(t)\text{ is relatively compact for a.a. }t\in I.
\end{equation}
It may be proven in a strictly analogous way to Lemma \ref{(3)} that \eqref{munch} entails 
\[\f.\begin{gathered}
M\subset\overline{U},\;M\subset\co\,(\{x_0\}\cup H(M))\\
\text{and }\overline{M}=\overline{C}\text{ with }C\subset M\text{ countable }
\end{gathered}\g\}\Longrightarrow\overline{M}\text{ is compact}.\]
Therefore, operator $H\colon D_{L^p}(h,R)\map L^p(I,E)$ satisfies condition \eqref{chnom2}. Since $K$ is linear continuous, $H$ is a compact convex valued upper semicontinuous map. In view of Theorem \ref{fixed2} there exists a solution of the Hammerstein integral inlusion \eqref{hammerstein}, contained in $D_{L^p}(h,R)$.
\end{proof}
\begin{remark}
{\em The case $p=\infty$ is a little bit tricky as far as it comes to proving the relative compactness of $\x{K(w_n)}$. The easiest way to avoid such speculations is to impose the following assumption 
\begin{itemize}
\item[$(\ka_3')$] the function $I\ni t\mapsto k(t,\cdot)\in L^r(I,{\mathcal L}(E))$ is continuous 
\end{itemize}
and to make use of the classical Arzel\`a criterion.}
\end{remark}
\begin{remark}
{\em The above proven result is essentially \cite[Corollary 4.5]{precup} formulated in the context of the integral inclusion \eqref{hammerstein}. The issue of the existence of continuous solutions to Hammerstein integral inclusion is well established in the literature of the subject. One such result was proved by the author in \cite{pietkun}.}
\end{remark}

\noindent{\bf Example 4.} The following Volterra integral inclusion
\begin{equation}\label{volterra}
x(t)\in h(t)+\int_0^tk(t,s)F(s,x(s))\,{\rm d}s,\;\;\text{a.e. on }I
\end{equation}
is a special case of the problem \eqref{hammerstein} with $k\colon I^2\to{\mathcal L}(E)$ such that $k(t,s)=0$ for $t<s$. The subsequent existence result concerning inclusion \eqref{volterra} stems from the application of Theorem \ref{solvability}. 
\begin{theorem}
Let $h\in L^p(I,E)$. Suppose that all assumptions of Theorem \ref{hamth} are satisfied with the exception of \eqref{hamcond}. Then the integral inclusion \eqref{volterra} possesses a $p$-integrable solution.
\end{theorem}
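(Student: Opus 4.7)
The plan is to mimic the proof of Theorem \ref{hamth} with the external operator
\[K\colon L^q(I,E)\to L^p(I,E),\qquad K(w)(t):=h(t)+\int_0^t k(t,s)w(s)\,{\rm d}s,\]
which fits the framework of \eqref{opK} upon extending $k$ by zero for $s>t$. Consequently $K$ is affine with convex fibers, is Lipschitz with constant $||\,||k(t,\cdot)||_r\,||_p$, and satisfies $(\es_1)$; the growth bound \eqref{ls2} continues to produce a closed ball $D_{L^p}(h,R)$ stable under $H:=K\circ N_F$ with uniformly $p$-integrable range, on whose boundary the Leray--Schauder condition \eqref{LS} holds. Since $K$ is linear continuous, $H$ is a compact convex valued upper semicontinuous operator, so one only needs a compactness-type condition to conclude via Theorem \ref{fixed2}(ii) (equivalently, Theorem \ref{solvability}).

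The only point at which the Hammerstein and Volterra arguments genuinely diverge is the verification of the M\"onch-type condition \eqref{chnom2}, where Theorem \ref{hamth} invoked the smallness hypothesis \eqref{hamcond}. I would instead replay the Hammerstein computation: for a countable $M\subset D_{L^p}(h,R)$ with $M\subset \co(\{h\}\cup H(\overline{M}))$, writing $M\subset\co(\{h\}\cup\{K(w_n)\}_{n=1}^\infty)$ with $w_n\in N_F(u_n)$ and $u_n\in\overline{M}$, and picking a separable subspace $E_0$ absorbing all the relevant ranges, Precup's Volterra kernel estimate \cite[Lemma 4.3]{precup} together with $(\F_6')$ yields
\[\varphi(t):=\beta_{E_0}\bigl(\{K(w_n)(t)-h(t)\}_{n=1}^\infty\bigr)\leqslant\int_0^t ||k(t,s)||_{\mathcal L}\,\eta_{E_0}(s)\,\varphi(s)\,{\rm d}s.\]
The Volterra structure is now essential: the upper limit is $t$, not $T$.

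The main obstacle is converting this integral inequality into $\varphi\equiv 0$ a.e.\ without recourse to \eqref{hamcond}. My strategy is to apply H\"older's inequality with conjugate exponents $r,q$ to the right-hand side, obtaining $\varphi(t)^q\leqslant ||k(t,\cdot)||_r^q\,\psi(t)$, where $\psi(t):=\int_0^t\eta_{E_0}(s)^q\varphi(s)^q\,{\rm d}s$, and hence
\[\psi'(t)\leqslant\eta_{E_0}(t)^q\,||k(t,\cdot)||_r^q\,\psi(t)\quad\text{a.e. on }I.\]
The coefficient $\eta_{E_0}^{\,q}\,||k(t,\cdot)||_r^{\,q}$ belongs to $L^1(I,\mathbb{R})$ by one more application of H\"older, this time with exponents $p/q$ and $p/(p-q)$, using precisely $\eta_{E_0}\in L^{pq/(p-q)}$ and $||k(t,\cdot)||_r\in L^p$ (with the obvious reading when $p=q$). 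Since $\psi$ is absolutely continuous with $\psi(0)=0$, Gronwall's lemma forces $\psi\equiv 0$, hence $\varphi\equiv 0$ a.e.; vertical slices $M(t)$ are therefore relatively compact for a.a. $t\in I$. The remaining propagation to compactness of $\overline{M}$ via the analog of Lemma \ref{(3)} sketched at the end of the proof of Theorem \ref{hamth}, and the closing invocation of Theorem \ref{fixed2}(ii), proceed unchanged.
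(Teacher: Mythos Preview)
Your proposal is correct and follows essentially the same strategy as the paper: both arguments observe that the Volterra structure replaces the upper limit $T$ by $t$ in estimate \eqref{v_n}, and then dispense with the smallness condition \eqref{hamcond} via a Gronwall-type argument. The only difference is bookkeeping in that final step. The paper raises the pointwise inequality to the $p$-th power, applies H\"older twice (first with exponents $r,q$, then with $p/(p-q),\,p/q$) to obtain
\[\int_0^t\varphi(s)^p\,{\rm d}s\leqslant ||\eta_{E_0}||_{\frac{pq}{p-q}}^p\int_0^t||k(s,\cdot)||_r^p\left(\int_0^s\varphi(\tau)^p\,{\rm d}\tau\right){\rm d}s,\]
and invokes Gronwall for $\Phi(t):=\int_0^t\varphi^p$ with coefficient $||k(s,\cdot)||_r^p\in L^1$. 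You instead raise to the $q$-th power and differentiate $\psi(t):=\int_0^t\eta_{E_0}^q\varphi^q$, obtaining Gronwall with coefficient $\eta_{E_0}^q||k(\cdot,\cdot)||_r^q\in L^1$. Both routes are valid; the paper's choice keeps the Gronwall coefficient purely in terms of the kernel, while yours mixes kernel and $\eta_{E_0}$ but avoids the second H\"older step inside the integral.
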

\begin{proof}
In the Volterra case, the mapping $K\colon L^q(I,E)\to L^p(I,E)$ should by defined as follows: \[K(w)(t):=h(t)+\int_0^tk(t,s)w(s)\,{\rm d}s,\;\;\;t\in I.\] In order to demonstrate the thesis it is sufficient to give reason for condition \eqref{munch}. The proof of this property goes exactly the same as previously until one reaches the estimate \eqref{v_n}. Here, we have
\begin{align*}
\int_0^t\beta_{E_0}\f(\x{v_n(s)-h(s)}\g)^p{\rm d}s&=\int_0^t\beta_{E_0}\f(\f\{\int_0^sk(s,\tau)w_n(\tau)\,{\rm d}\tau\g\}_{n=1}^\infty\g)^p{\rm d}s\\&\<\int_0^t\f(\int_0^s||k(s,\tau)||_{\mathcal L}\beta_{E_0}\f(\x{w_n(\tau)}\g)\,{\rm d}\tau\g)^p{\rm d}s\\&\<\int_0^t\f(\int_0^s||k(s,\tau)||_{\mathcal L}\eta_{E_0}(\tau)\beta_{E_0}\f(\x{v_n(\tau)-h(\tau)}\g)\,{\rm d}\tau\g)^p{\rm d}s\\&\<\int_0^t||k(s,\cdot)||_r^p||\eta_{E_0}||_\frac{pq}{p-q}^p\int_0^s\beta_{E_0}\f(\x{v_n(\tau)-h(\tau)}\g)^p{\rm d}\tau\,{\rm d}s
\end{align*}
for every $t\in I$. Hence, $\beta_{E_0}\f(\x{v_n(t)-h(t)}\g)=0$ a.e. on $I$ by the Gronwall inequality. This shows that condition \eqref{munch} is fulfilled in the Volterra case as well.
\end{proof}
\begin{remark}
{\em Of course, modifying the assumptions about the integral kernel $k$ accordingly, it is not difficult to demonstrate the existence of continuous solutions to Volterra inclusion $($cf. \cite[Theorem 5]{pietkun}$)$.}
\end{remark}
\begin{corollary}
Let $1\<p=q<\infty$ and $h\in L^p(I,E)$. Suppose that all assumptions of Theorem \ref{hamth} are satisfied with the exception of \eqref{ls2} and \eqref{hamcond}. Then the integral inclusion \eqref{volterra} possesses a $p$-integrable solution.
\end{corollary}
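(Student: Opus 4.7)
The plan is to adapt the proof of the preceding Volterra theorem, exploiting the fact that when $p=q$ the sublinearity in $(\F_4)$ becomes linear growth and the two smallness conditions \eqref{ls2}, \eqref{hamcond} can be dispensed with in favour of the Bellman--Gronwall inequality. I would again take $K(w)(t):=h(t)+\int_0^t k(t,s)w(s)\,{\rm d}s$; the arguments showing that $K$ satisfies $(\es_1)$, $(\es_2)$ and has convex fibers are borrowed verbatim from the proof of Theorem \ref{hamth}, as they used neither \eqref{ls2} nor \eqref{hamcond}. Since $K$ is linear continuous, the same remarks made in the Volterra case show that $H:=K\circ N_F$ is a compact convex valued upper semicontinuous multimap.

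The first novel step is an \emph{a priori} bound replacing \eqref{ls2}. For any $u\in L^p(I,E)$ with $u=(1-\lambdaup)h+\lambdaup K(w)$, $w\in N_F(u)$, $\lambdaup\in[0,1]$, one has $u(t)-h(t)=\lambdaup\int_0^t k(t,s)w(s)\,{\rm d}s$, whence H\"older with the conjugate pair $(r,p)$ (valid because $q=p$) together with $(\F_4)$ yields
\[
|u(t)-h(t)|^p\<||k(t,\cdot)||_r^p\int_0^t(b(s)+c|u(s)|)^p\,{\rm d}s\<A(t)+B(t)\int_0^t|u(s)-h(s)|^p\,{\rm d}s,
\]
where $A,B\in L^1(I,\R{})$ depend only on $||h||_p$, $||b||_p$, $c$ and $||\,||k(t,\cdot)||_r\,||_p$. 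The linear Gronwall inequality turns this into $||u-h||_p\<R_0$ for a constant $R_0$ independent of $\lambdaup$, so choosing any $R>R_0$ forces the Leray--Schauder boundary condition \eqref{LS} on $\partial D_{L^p}(h,R)$ with $x_0:=h=K(0)$.

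The second novel step replaces \eqref{hamcond} in the derivation of \eqref{munch}. Running the Volterra proof up to the inequality
\[
\beta_{E_0}\f(\x{v_n(t)-h(t)}\g)^p\<\f(\int_0^t||k(t,\tau)||_\mathcal{L}\eta_{E_0}(\tau)\beta_{E_0}\f(\x{v_n(\tau)-h(\tau)}\g)\,{\rm d}\tau\g)^p,
\]
and interpreting $(\F_6')$ with the natural convention $pq/(p-q)=\infty$, so $\eta_{E_0}\in L^\infty(I,\R{})$, H\"older with $(r,p)$ gives, upon setting $G(t):=\int_0^t\beta_{E_0}\f(\x{v_n(s)-h(s)}\g)^p\,{\rm d}s$,
\[
G(t)\<||\eta_{E_0}||_\infty^p\int_0^t||k(s,\cdot)||_r^p\,G(s)\,{\rm d}s.
\]
Gronwall pins $G\equiv 0$, so the vertical slices $M(t)$ are relatively compact for a.a.\ $t\in I$; this yields \eqref{munch} and then \eqref{chnom2} via the same bootstrap used in the preceding theorem. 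Finally, Theorem \ref{fixed2}(ii) supplies the desired fixed point of $H$ in $D_{L^p}(h,R)$. The principal obstacle will be keeping the Gronwall rates $B(t)$ and $||k(s,\cdot)||_r^p\,||\eta_{E_0}||_\infty^p$ merely in $L^1(I,\R{})$ rather than having to require them small; this is assured by $(\ka_3)$, since $||k(t,\cdot)||_r\in L^p(I,\R{})$ forces $||k(t,\cdot)||_r^p\in L^1(I,\R{})$.
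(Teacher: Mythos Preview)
Your proposal is correct and follows essentially the same route as the paper. Both replace \eqref{ls2} by a Gronwall-based a~priori bound on $\int_0^t|u(s)-h(s)|^p\,{\rm d}s$ (the paper phrases this via Yamamuro's condition, fixing an explicit $R$ and showing $\lambdaup>1$ would force $||u-h||_p<R$, whereas you obtain a $\lambdaup$-independent bound $R_0$ and take $R>R_0$); your second step simply reproduces the Volterra theorem's Gronwall argument specialized to $p=q$ (so $\eta_{E_0}\in L^\infty$), which the paper's corollary proof inherits from that theorem and therefore does not repeat.
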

\begin{proof}
Exclusion of assumption  \eqref{ls2} means for us necessity of showing that $H\colon D_{L^p}(h,R)\map L^p(I,E)$ satisfies boundary condition \eqref{LS} for some radius $R>0$. To this aim, put \[R:=2^{1-\frac{1}{p}}||\,||k(t,\cdot)||_r\,||_p\f(||b||_p+c||h||_p\g)e^{\frac{1}{p}2^{p-1}c^p||\,||k(t,\cdot)||_r\,||_p^p}.\] Assume that $u\in\partial D_{L^p}(h,R)$. If $\lambdaup(u-h)\in H(u)-h$, then
\[\lambdaup|u(t)-h(t)|\<||k(t,\cdot)||_r||b||_p+c||k(t,\cdot)||_r\f(\int_0^t\f(|u(s)-h(s)|+|h(s)|\g)^p\,{\rm d}s\g)^\frac{1}{p}\] for a.a. $t\in I$. Whence
\[\int_0^t|u(s)-h(s)|^p\,{\rm d}s\<\lambdaup^{-p}2^{p-1}||\,||k(t,\cdot)||_r\,||_p^p\f(||b||_p+c||h||_p\g)^p+\lambdaup^{-p}2^{p-1}c^p\int_0^t||k(s,\cdot)||_r^p\int_0^s|u(\tau)-h(\tau)|^p{\rm d}\tau\,{\rm d}s\] for every $t\in I$. The Gronwall inequality implies
\[\int_0^t|u(s)-h(s)|^p\,{\rm d}s\<\lambdaup^{-p}2^{p-1}||\,||k(t,\cdot)||_r\,||_p^p\f(||b||_p+c||h||_p\g)^p\exp\f(\lambdaup^{-p}2^{p-1}c^p\int_0^t||k(s,\cdot)||_r^p\,{\rm d}s\g)\] for $t\in I$. Eventually, the following estimation holds 
\begin{equation}\label{xxx}
||u-h||_p\<\frac{1}{\lambdaup}2^{1-\frac{1}{p}}||\,||k(t,\cdot)||_r\,||_p\f(||b||_p+c||h||_p\g)e^{\frac{1}{p}\lambdaup^{-p}2^{p-1}c^p||\,||k(t,\cdot)||_r\,||_p^p}.
\end{equation}
If $\lambdaup>1$ would be the case, then \eqref{xxx} leads to \[||u-h||_p<2^{1-\frac{1}{p}}||\,||k(t,\cdot)||_r\,||_p\f(||b||_p+c||h||_p\g)e^{\frac{1}{p}2^{p-1}c^p||\,||k(t,\cdot)||_r\,||_p^p},\] which is in contradiction with the definition of radius $R$. Therefore $\lambdaup\<1$ and the Yamamuro's condition is satisfied (Remark \ref{rem}).
\end{proof}

\noindent{\bf Example 5.}
Let $\f(\mathbb{H},|\cdot|_{\mathbb H}\g)$ be a separable Hilbert space and $(E,|\cdot|_E)$ be a subspace of $\mathbb{H}$ carrying the structure of a separable reflexive Banach space, which embeds into $\mathbb{H}$ continuously and densly. Identifying $\mathbb{H}$ with its dual we obtain $E\hookrightarrow \mathbb{H}\hookrightarrow E^*$, with all embeddings being continuous and dense. Such a triple of spaces is usually called evolution triple (\cite[p. 416]{zeidler}). Let us note that since $E\hookrightarrow \mathbb{H}\hookrightarrow E^*$ continuously, there exist constants $L,M>0$ such that $|\cdot|_\mathbb{H}\<L|\cdot|_E$ and $|\cdot|_{E^*}\<M|\cdot|_\mathbb{H}$. Without any loss of generality and to simplify our calculations we may take $L=M=1$.\par In this example we will investigate an abstract evolution inclusion of the following form:
\begin{equation}\label{inclus}
\begin{gathered}
\dot{x}(t)+A(t,x(t))+F(t,x(t))\ni h(t),\;\;\text{a.e. on }I:=[0,1],\\
x(0)=x(1),
\end{gathered}
\end{equation}
where $A\colon I\times E\map E^*$, $F\colon I\times \mathbb{H}\map \mathbb{H}$ and $h\in L^q(I,E^*)$. \par Let $2\<p<\infty$ and $1<q\<2$ be H\"older conjugates. We assume an agreement regarding the following notations: $\e:=L^p(I,E)$, $\e^*:=L^q(I,E^*)$, $\h:=L^p(I,\mathbb{H})$, $\h^*:=L^q(I,\mathbb{H})$, $\langle\langle\cdot,\cdot\rangle\rangle:=\langle\cdot,\cdot\rangle_{\e^*\times\e}$ $($the duality brackets for the pair $(\e^*,\e))$.\par The symbol $W_p^1$ stands for the Bochner-Sobolev space \[W_p^1(0,1;E,\mathbb{H}):=\left\{x\in\e\colon\dot{x}\in\e^*\right\},\] where the derivative $\dot{x}$ is understood in the sense of vectorial distributions. Denote by $N_A\colon\e\map\e^*$ the Nemytski\v{\i} operator corresponding to the multimap $A$. By a solution of \eqref{inclus} we mean a function $x\in W_p^1$ such that $\dot{x}(t)+g(t)+f(t)=h(t)$ a.e. on $I$, $x(0)=x(1)$ with $g\in N_A(x)$ and $f\in N_F(x)$.\par Our hypothesis on the operator $A\colon I\times E\map E^*$ are as follows:
\begin{itemize}
\item[$(\A_1)$] for every $(t,x)\in I\times E$ the set $A(t,x)$ is nonempty closed and convex,
\item[$(\A_2)$] the map $t\mapsto A(t,x)$ has a measurable selection for every $x\in E$,
\item[$(\A_3)$] for a.a. $t\in I$, the operator $A(t,\cdot)\colon E\map E^*$ is hemicontinuous (i.e. $\lambdaup\mapsto A(t,x+\lambdaup y)$ is usc from $[0,1]$ into $(E^*,w)$ for all $x,y\in E$),
\item[$(\A_4)$] the map $x\mapsto A(t,x)$ is monotone,
\item[$(\A_5)$] there exists a nonnegative function $a\in L^q(I,\R{})$ and a constant $\hat{c}>0$ such that for all $x\in E$ and for a.a. $t\in I$,\[||A(t,x)||^+_{E^*}\<a(t)+\hat{c}|x|_E^{\frac{p}{q}},\]
\item[$(\A_6)$] there exists a constant $d>0$ such that for all $x\in E$ and a.e. on $I$, \[d|x|_E^p\<\langle A(t,x),x\rangle^-:=\inf\{\langle y,x\rangle\colon y\in A(t,x)\}.\]
\end{itemize}

\begin{theorem}\label{th1}
Let $(E,\mathbb{H},E^*)$ be an evolution triple such that $E\hookrightarrow \mathbb{H}$ compactly. Assume that conditions $(\A_1)$--$(\A_6)$ and $(\F_1)$--$(\F_4)$ are satisfied. Suppose further that the following inequality holds 
\begin{equation}\label{stala}
c<d.
\end{equation}
Then problem \eqref{inclus} has at least one solution. Moreover, these solutions form a compact subset of the space $(\h,||\cdot||_\h)$.
\end{theorem}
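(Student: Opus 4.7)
The plan is to recast the boundary value problem \eqref{inclus} as a Hammerstein inclusion $x\in K(N_F(x))$ in the space $\h=L^p(I,\mathbb{H})$, where $K\colon\h^*\to\h$ assigns to each $f\in L^q(I,\mathbb{H})$ the unique solution $x\in W_p^1$ of the periodic quasi-autonomous inclusion
\[\dot x(t)+A(t,x(t))\ni h(t)-f(t),\quad x(0)=x(1).\]
The superposition $H:=K\circ N_F$ will then be fitted into the hypotheses of Theorem \ref{solvability}(i), with the reflexive Hilbert space $\mathbb{H}$ playing the role of ambient Banach space.

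First I would verify well-posedness of $K$: conditions $(\A_1)$--$(\A_6)$ make $A(t,\cdot)$ a bounded, coercive, hemicontinuous, multivalued monotone operator in the evolution triple, so the periodic problem admits a solution $x\in W_p^1$ by the classical Br\'ezis--Lions theory, unique in virtue of monotonicity combined with the integration-by-parts formula for the $\mathbb{H}$-norm and periodicity. Convexity of the values of $A$ coming from $(\A_1)$ then makes the fibers $K^{-1}(\{x\})$ convex, since any convex combination of admissible selections is itself an admissible selection. Next I would verify assumption $(\es_3)$. Given $f_n\rightharpoonup f$ in $\h^*$ with $x_n:=K(f_n)$ and $g_n\in N_A(x_n)$ satisfying $\dot x_n+g_n+f_n=h$, testing with $x_n$ and using $(\A_6)$ together with periodicity yields a uniform bound on $\{x_n\}$ in $\e:=L^p(I,E)$. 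The growth estimate $(\A_5)$ then bounds $\{g_n\}$ and $\{\dot x_n\}$ in $\e^*$. Since $E\hookrightarrow\mathbb{H}$ is compact, the Aubin--Lions lemma provides a compact embedding $W_p^1\hookrightarrow\h$, so $x_n\to x$ strongly in $\h$ up to a subsequence, and the standard Minty--Browder trick (exploiting monotonicity and hemicontinuity of $A(t,\cdot)$) identifies the weak limit $g$ of $g_n$ in $\e^*$ as a selection of $N_A(x)$. Uniqueness of the limiting equation then promotes the subsequential convergence to convergence of the whole sequence, and $(\es_3)$ follows because $K$ is single-valued.

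The a priori estimate for fixed points is obtained by testing $\dot x+g+f=h$ with $x$ for $x\in\fix(H)$; using $|\langle f,x\rangle|\<|f|_\mathbb{H}|x|_\mathbb{H}\<(b(t)+c|x|_\mathbb{H}^{p/q})|x|_\mathbb{H}$, the identity $p/q+1=p$, the inequality $|x|_\mathbb{H}\<|x|_E$ and $(\A_6)$, I expect
\[(d-c)\|x\|_\e^p\<(\|h\|_{\e^*}+\|b\|_q)\|x\|_\e,\]
so that hypothesis \eqref{stala} gives a uniform bound on $\|x\|_\e$, and hence on $\|x\|_\h$. The same estimate applied to $x\in(1-\lambdaup)K(0)+\lambdaup H(x)$, $\lambdaup\in(0,1)$, yields \eqref{LS} on $\partial D_\h(K(0),R)$ for $R$ chosen sufficiently large. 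Condition \eqref{monch3} is in fact automatic: the same bounds show that $H$ maps bounded subsets of $\h$ into bounded subsets of $W_p^1$, hence into relatively compact subsets of $\h$ by Aubin--Lions, so that $H$ is fully compact. Theorem \ref{solvability}(i) will then deliver a fixed point $x\in W_p^1$ of $H$, which is by construction a solution of \eqref{inclus}; the solution set is bounded in $W_p^1$, hence relatively compact in $\h$, and closed thanks to the weak-to-strong continuity of $K$ combined with Corollary \ref{niemycor}. The principal technical obstacle is the Minty--Browder limit identification step for the multivalued, merely hemicontinuous monotone operator $A$: one must combine the strong $\h$-convergence of $x_n$ with the energy inequality $\limsup\int\langle g_n,x_n\rangle\<\int\langle g,x\rangle$, which is obtained by returning to the equation and using the fact that periodicity cancels the boundary term $\frac{1}{2}(|x(1)|_\mathbb{H}^2-|x(0)|_\mathbb{H}^2)$.
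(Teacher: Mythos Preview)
Your outline follows the same architecture as the paper: recast \eqref{inclus} as a Hammerstein inclusion with external operator $K=(L+N_{A_h})^{-1}$ (where $L=\frac{d}{dt}$ on periodic functions and $A_h=A-h$), verify $(\es_3)$ via the compact embedding $W_p^1\hookrightarrow\h$ together with Minty's monotonicity trick, use the resulting full compactness of $H$ to get \eqref{monch3}, and derive the Leray--Schauder boundary condition from the coercivity estimate based on \eqref{stala}. The a~priori bound and the closing remarks on compactness of the solution set are also in line with the paper.

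There is, however, one genuine gap. You assert that $K$ is single-valued, claiming uniqueness of the periodic solution ``in virtue of monotonicity combined with the integration-by-parts formula.'' Hypothesis $(\A_4)$ gives only monotonicity, not strict monotonicity: if $x_1,x_2$ are two periodic solutions with selections $g_i\in N_A(x_i)$, subtracting and testing yields
\[\tfrac12\f(|x_1(1)-x_2(1)|_{\mathbb H}^2-|x_1(0)-x_2(0)|_{\mathbb H}^2\g)+\int_0^1\langle g_1-g_2,x_1-x_2\rangle\,{\rm d}t=0,\]
so the integral vanishes and $|x_1(t)-x_2(t)|_{\mathbb H}$ is constant---but nothing forces this constant to be zero, and neither $(\A_5)$ nor $(\A_6)$ supplies the missing strictness. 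The paper therefore treats $K$ as a genuinely \emph{set-valued} operator. The compactness of its values follows from exactly the sequential argument you sketch (bounds in $W_p^1$, Aubin--Lions, Minty); the convexity of its values---needed for acyclicity in $(\es_3)$---is obtained from the maximal monotonicity of the inverse $(L+N_{A_h})^{-1}$. Once $K$ is multivalued, your ``uniqueness of the limiting equation promotes subsequential convergence to full convergence'' and ``$(\es_3)$ follows because $K$ is single-valued'' both collapse and must be replaced by this maximal-monotonicity argument. With that correction your plan coincides with the paper's proof.
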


\begin{proof}
Let $A_h\colon I\times E\map E^*$ be defined by $A_h(t,x):=A(t,x)-h(t)$. Evidently, the multimap $A_h$ meets conditions $(\A_1)$-$(\A_6)$, with the proviso that \[||A_h(t,x)||^+_{E^*}\<(a(t)+|h(t)|_{E^*})+\hat{c}|x|_E^\frac{p}{q}\;\;\text{and}\;\;\langle A_h(t,x),x\rangle^-\geqslant d|x|^p_E-|h(t)|_{E^*}|x|_E.\] Observe that conditions $(\F_1)$-$(\F_4)$ are fulfilled by the multimap $A_h$. Indeed, assumptions $(\A_1)$, $(\A_3)$,~$(\A_4)$ entail the strong-weak sequential closedness of the graph of $A_h$ while weak upper semicontinuity of the map $x\mapsto A_h(t,x)$ follows from assumptions $(\A_3)$ and $(\A_4)$ (see \cite[Proposition 3.2.18]{papa}). Therefore, the Nemytski\v{\i} operator $N_{A_h}\colon\e\map\e^*$ is a convex weakly compact valued weakly upper semicontinuous map (cf. Corollary \ref{niemycor}). As such it is maximal monotone (cf. \cite[Proposition 3.2.19]{papa}).\par Denote by the symbol $L\colon D(L)\to\e^*$ a continuous linear differential operator $L:=\frac{d}{dt}$, with the domain \[D(L):=\left\{x\in W_p^1\colon x(0)=x(1)\right\}\] being a closed linear subspace of $W_p^1$. In view of \cite[Proposition 32.10]{zeidler} this operator is maximal monotone as well. Therefore, the sum $L+N_{A_h}\colon D(L)\map\e^*$ must be maximal monotone (by \cite[Theorem 3.2.41]{papa}). Since \[\frac{\langle\langle Lx,x\rangle\rangle+\langle\langle N_{A_h}(x),x\rangle\rangle^-}{||x||_\e}\geqslant\frac{d||x||_\e^p-||h||_{\e^*}||x||_\e}{||x||_\e}=d||x||^{p-1}_\e-||h||_{\e^*}\xrightarrow[||x||_\e\to+\infty]{}+\infty,\] the map $L+N_{A_h}$ is coercive and the equality $(L+N_{A_h})(D(L))=\e^*$ follows (cf. \cite[Cor.3.2.31]{papa}). It is an immediate consequence of definition that the set-valued inverse operator $(L+N_{A_h})^{-1}\colon\e^*\map\e$ is also maximal monotone, i.e. \[\langle\langle x_1-y_1,x_2-y_2\rangle\rangle\geqslant 0\;\;\forall\,(x_1,x_2)\in\Graph((L+N_{A_h})^{-1})\Longrightarrow (y_1,y_2)\in\Graph((L+N_{A_h})^{-1})\subset\e^*\times D(L).\] \par It is easy to realize that \eqref{inclus} is in fact an inclusion of Hammerstein type. Since \[Lx\in-N_{A_h}(x)+N_{(-F)}(x)\Leftrightarrow x\in(L+N_{A_h})^{-1}\!\!\circ N_{(-F)}(x),\] it is fully understandable that the external operator $K\colon\h^*\map\h$ should be defined as $K:=(L+N_{A_h})^{-1}$. If we set $H\colon\h\map\h$ to be $H:=K\circ N_{(-F)}$, then the value of $H$ at point $u$ is a solution set of the periodic problem
\begin{equation}
\begin{gathered}
\dot{x}(t)+A(t,x(t))+F(t,u(t))\ni h(t),\;\;\text{a.e. on }I\\
x(0)=x(1).
\end{gathered}
\end{equation}
\par Notice that $K^{-1}(\{x\})\cap N_{(-F)}(u)=\f(L+N_{A_h}\g)(x)\cap N_{(-F)}(u)$ for any $x\in H(u)$. This means that operator $K$ possesses convex fibers as a map from $N_{(-F)}(u)$ onto the image $H(u)$, which is exactly what we need to be able to apply Theorem \ref{solvability}.\par Let $w_n\rightharpoonup w_0$ in $\h^*$ and $x_n\in K(w_n)$ for $\n$. This means that $x_n\in D(L)$ and $Lx_n+z_n=w_n$ for some $z_n\in N_{A_h}(x_n)$. Since $\{w_n\}_\n$ is bounded in $\h^*$, we see that 
\[||Lx_n||_{\e^*}\<||z_n||_{\e^*}+||w_n||_{\e^*}\<||N_{A_h}(x_n)||^+_{\e^*}+||w_n||_{\h^*}\<||a||_q+||h||_{\e^*}+\hat{c}||x_n||_\e^{p-1}+\sup_\n||w_n||_{\h^*}\] and \[d||x_n||^p_\e-||h||_{\e^*}||x_n||_\e\<\langle\langle z_n,x_n\rangle\rangle+\langle\langle Lx_n,x_n\rangle\rangle=\langle\langle w_n,x_n\rangle\rangle\<||w_n||_{\e^*}||x_n||_\e\<\sup_\n||w_n||_{\h^*}||x_n||_\e,\] i.e. \[||x_n||^{p-1}_\e\<d^{-1}\f(\sup_\n||w_n||_{\h^*}+||h||_{\e^*}\g).\] Thus, the sequence $\z{x_n}$ is bounded in $W_p^1$ and we may assume, passing to a subsequence if necessary, that $x_n\xrightharpoonup[n\to\infty]{W_p^1}x_0$. Moreover, there is a subsequence (again denoted by) $\z{z_n}$ such that $z_n\xrightharpoonup[n\to\infty]{\e^*}z_0$. Since $W_p^1$ embeds into $\h$ compactly (see \cite[Theorem 2.2.30]{papa}), we infer that $(x_n)_\n$ tends strongly, up to a subsequence, to $x_0$ in the norm of $\h$. Observe that $\langle\langle\dot{x}_n,x_n-x_0\rangle\rangle=\langle\langle\dot{x}_0,x_n-x_0\rangle\rangle$ and so
\begin{align*}
\limsup_{n\to\infty}\,\langle\langle z_n,x_n-x_0\rangle\rangle&\<\limsup\limits_{n\to\infty}\,\langle\langle-\dot{x}_n,x_n-x_0\rangle\rangle+\limsup_{n\to\infty}\,\langle\langle w_n,x_n-x_0\rangle\rangle\\&=\lim_{n\to\infty}\,\langle\langle-\dot{x}_0,x_n-x_0\rangle\rangle+\limsup_{n\to\infty}\,\langle w_n,x_n-x_0\rangle_{\h^*\times\h}\\&\<0+\limsup_{n\to\infty}||w_n||_{\h^*}||x_n-x_0||_\h\\&\<\sup_\n||w_n||_{\h^*}\lim_{n\to\infty}\,||x_n-x_0||_\h=0.
\end{align*}
Hence,
\begin{equation}\label{eM6}
\limsup\limits_{n\to\infty}\,\langle\langle z_n,x_n\rangle\rangle\<\langle\langle z_0,x_0\rangle\rangle.
\end{equation} 
Recalling the decisive monotonicity trick one easily sees that $z_0\in N_{A_h}(x_0)$. Indeed, by \eqref{eM6} we have
\begin{equation}\label{monotrick}
\begin{split}
\langle\langle z_0-z,x_0-x\rangle\rangle&=\langle\langle z_0,x_0\rangle\rangle-\langle\langle z,x_0\rangle\rangle-\langle\langle z_0-z,x\rangle\rangle\\&\geqslant\limsup\limits_{n\to\infty}\,\f(\langle\langle z_n,x_n\rangle\rangle-\langle\langle z,x_n\rangle\rangle-\langle\langle z_n-z,x\rangle\rangle\g)=\limsup\limits_{n\to\infty}\,\langle\langle z_n-z,x_n-x\rangle\rangle\geqslant 0
\end{split}
\end{equation}
for every $(x,z)\in\Graph(N_{A_h})$. Since $N_{A_h}$ is maximal monotone, we get $z_0\in N_{A_h}(x_0)$. The weak convergence $x_n\xrightharpoonup[n\to\infty]{W_p^1}x_0$ entails $x_n\xrightharpoonup[n\to\infty]{C(I,\mathbb{H})}x_0$. Thus, $x_n(t)\xrightharpoonup[n\to\infty]{\mathbb{H}}x_0(t)$ for every $t\in I$. This means that the boundary condition $x_0(0)=x_0(1)$ is satisfied. Since \[Lx_0+z_0\xleftharpoonup[n\to\infty]{\e^*}Lx_n+z_n=w_n\xrightharpoonup[n\to\infty]{\e^*}w_0,\] it follows that $x_0\in K(w_0)$. This justifies the claim that the external operator $K\colon\h^*\map\h$ is a strongly upper semicontinuous map with compact values. Further, maximal monotonicity of $K$ implies also convexity of its values (cf. \cite[Proposition 3.2.7]{papa}). In other words, operator $K$ satisfies assumption $(\es_3)$. \par It is easy to see that operator $H$ maps bounded sets into relatively compact ones. Indeed, let $x\in H(u)$ and $w\in N_{(-F)}(u)$ be such that $x\in K(w)$. Then
\begin{equation}\label{szacu5}
\begin{split}
||Lx||_{\e^*}&\<||-N_{A_h}(x)+w||^+_{\e^*}\<||N_{A_h}(x)||^+_{\e^*}+||w||_{\e^*}\<||N_{A_h}(x)||^+_{\e^*}+||w||_{\h^*}\\&\<||a||_q+||h||_{\e^*}+\hat{c}||x||_\e^{p-1}+||b||_q+c||u||_{\h}^{p-1}.
\end{split}
\end{equation}
and 
\[d||x||^p_\e-||h||_{\e^*}||x||_\e\<\langle\langle N_{A_h}(x),x\rangle\rangle^-\<\langle\langle w,x\rangle\rangle-\langle\langle Lx,x\rangle\rangle\<||w||_{\e^*}||x||_\e-0\<\f(||b||_q+c||u||_{\h}^{p-1}\g)||x||_\e.\] If $u\in D_\h(0,R)$, then
\begin{equation}\label{inv2}
||x||^{p-1}_\e\<d^{-1}\f(||b||_q+cR^{p-1}+||h||_{\e^*}\g).
\end{equation}
Taking into account \eqref{szacu5}, we arrive at the estimate
\[||Lx||_{\e^*}\<||a||_q+||h||_{\e^*}+||b||_q+(1+d^{-1}\hat{c})cR^{p-1}+d^{-1}\hat{c}\f(||b||_q+||h||_{\e^*}\g).\] Therefore, the image $H(D_\h(0,R))$ is bounded in $W_p^1$. Since the embedding $W_p^1\hookrightarrow\h$ is compact, the set $H(D_\h(0,R))$ must have a compact closure in the space $\h$. This means in particular that operator $H\colon D_\h(0,R)\map\h$ satisfies condition \eqref{monch}.
\par In order to complete the proof we will choose a radius $R>0$ in such a way that $u\not\in\lambdaup H(u)$ on $\partial D_\h(0,R)$ for all $\lambdaup\in(0,1)$. Let \[R:=\f(\frac{||b||_q+||h||_{\e^*}}{d-c}\g)^\frac{q}{p}.\] This definition is correct, since we have assumed \eqref{stala}. Suppose that $u\in\partial D_\h(0,R)$ and $\lambdaup u\in H(u)$, i.e. $L(\lambdaup u)+w\in-N_{A_h}(\lambdaup u)$ for some $w\in N_F(u)$. One easily sees that 
\begin{equation}\label{equ10}
\begin{split}
d||\lambdaup u||_\e^p-||h||_{\e^*}||\lambdaup u||_\e&\<\langle\langle N_{A_h}(\lambdaup u),\lambdaup u\rangle\rangle^-\<\langle\langle-L(\lambdaup u)-w,\lambdaup u\rangle\rangle\<||N_F(u)||_{\e^*}^+||\lambdaup u||_\e\\&\<\f(||b||_q+c||u||_{\h}^{p-1}\g)||\lambdaup u||_\e
\end{split}
\end{equation}
and so \[d||u||_{\h}^{p-1}\<d||u||_{\e}^{p-1}\<\lambdaup^{1-p}\f(||b||_q+c||u||_{\h}^{p-1}+||h||_{\e^*}\g).\] The latter implies that $\lambdaup\<1$. Otherwise, the following inequality would have to be satisfied \[R<\f(\frac{||b||_q+||h||_{\e^*}}{d-c}\g)^\frac{q}{p},\] which contradicts the definition of radius $R$. Therefore, operator $H\colon D_\h(0,R)\map\h$ satisfies the boundary condition \eqref{LS}.\par By virtue of Theorem \ref{solvability} operator $H$ possesses at least one fixed point. Of course, this fixed point constitutes a solution of the periodic problem \eqref{inclus}.\par The preceding estimation ensures also that $\fix(H)\subset D_\h(0,R)$. Since $H$ is completely continuous, the fixed point set $\fix(H)$ is closed and the image of this set $H(\fix(H))$ possesses a compact closure. Given that $\fix(H)\subset\overline{H(\fix(H))}$, the solution set of the periodic problem \eqref{inclus} must be compact as a subset of the space $\h$.
\end{proof}

\begin{corollary}
Under assumptions of Theorem \ref{th1}. the solution set of the periodic problem \eqref{inclus} is nonempty and compact in the norm topology of the space $C(I,\mathbb{H})$.
\end{corollary}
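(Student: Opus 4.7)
The plan is to promote the $\h$-compactness of $\fix(H)$ established in the proof of Theorem \ref{th1} to compactness in $(C(I,\mathbb{H}),\|\cdot\|)$, using the chain rule on the evolution triple together with the monotonicity of $N_A$ and the periodic boundary condition.

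Given any sequence $(x_n) \subset \fix(H)$, I would first invoke the a priori bounds \eqref{inv2} and \eqref{szacu5} to conclude that $(x_n)$ is bounded in the reflexive space $W_p^1$. Extracting a subsequence then yields $x_n \rightharpoonup x_0$ in $W_p^1$, $x_n \to x_0$ in $\h$ (by the compact Aubin--Lions embedding $W_p^1 \hookrightarrow \h$) and $x_n(t) \to x_0(t)$ in $\mathbb{H}$ for a.e.\ $t \in I$. Closedness of $\fix(H)$ in $\h$ (from the end of the proof of Theorem \ref{th1}) gives $x_0 \in \fix(H)$; I then fix a point $t_* \in I$ at which $x_n(t_*) \to x_0(t_*)$ in $\mathbb{H}$.

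Next, I would pick $g_n \in N_A(x_n)$ and $f_n \in N_F(x_n)$ with $\dot x_n + g_n + f_n = h$, and analogously $g_0, f_0$ for $x_0$. Setting $\phi_n(t) := \tfrac{1}{2}|x_n(t) - x_0(t)|_\mathbb{H}^2$, the standard chain rule on the evolution triple combined with the monotonicity axiom $(\A_4)$ produces $\phi_n'(t) \leq |\langle f_n(t) - f_0(t), x_n(t) - x_0(t)\rangle|$ a.e.\ on $I$. Since $F$ is $\mathbb{H}$-valued, the growth bound $(\F_4)$ refers to $|\cdot|_\mathbb{H}$ and keeps $(f_n)$ bounded in $\h^*$; integrating and applying H\"older's inequality in the $\mathbb{H}$-pairing then yields
\[\phi_n(t) - \phi_n(s) \leq \|f_n - f_0\|_{\h^*}\|x_n - x_0\|_\h \xrightarrow[n\to\infty]{} 0\]
whenever $s \leq t$ in $I$.

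For $t \geq t_*$ this delivers $\phi_n(t) \leq \phi_n(t_*) + o(1)$ immediately. For $t < t_*$, the periodic identity $\phi_n(0) = \phi_n(1)$ permits chaining the estimate successively across $[t_*, 1]$ and $[0, t]$, giving $\phi_n(t) \leq \phi_n(t_*) + 2\|f_n - f_0\|_{\h^*}\|x_n - x_0\|_\h$. Since $\phi_n(t_*) \to 0$, the resulting bound is uniform in $t$, whence $x_n \to x_0$ in $C(I,\mathbb{H})$. The main obstacle is precisely the time-asymmetry of the monotonicity inequality --- it propagates information forward but not backward --- and this is resolved by the periodic boundary condition, which provides the wrap-around needed to control the values $\phi_n(t)$ for $t < t_*$.
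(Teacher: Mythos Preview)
Your argument is correct and matches the paper's proof in all essentials: bound $\fix(H)$ in $W_p^1$, pass to a subsequence converging in $\h$, fix a point $t_*$ of a.e.\ convergence, use the chain rule plus the monotonicity hypothesis $(\A_4)$ to get a one-sided differential inequality for $\phi_n(t)=\tfrac12|x_n(t)-x_0(t)|_\mathbb{H}^2$, then exploit periodicity to close the estimate uniformly in $t$. The only cosmetic difference is that the paper first propagates from $t_*$ to $t=1$, uses $\phi_n(0)=\phi_n(1)$ to anchor at $t=0$, and then integrates once from $0$ to an arbitrary $t$, whereas you split into the two cases $t\geqslant t_*$ and $t<t_*$; the underlying mechanism is identical.
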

\begin{proof}
Suppose that $u_n\in H(u_n)$. Since the fixed point set $\fix(H)$ is bounded in $W_p^1$ (as we have actually proved that previously), we may assume, passing to a subsequence if necessary, that $u_n\xrightharpoonup[n\to\infty]{W_p^1}u$. Let $(u_{k_n})_\n$ be a subsequence strongly convergent to $u$ in $\h$. We know already that the limit point $u$ belongs to $\fix(H)$. Thus, it is sufficient to show that $\sup_{t\in I}|u_{k_n}(t)-u(t)|_\mathbb{H}\to 0$ as $n\to\infty$.\par Let $w\in N_F(u)$ be such that $\dot{u}(t)+w(t)-h(t)\in-A(t,u(t))$ a.e. on $I$. Clearly, there are $w_{k_n}\in N_F(u_{k_n})$ fulfilling $w_{k_n}\xrightharpoonup[n\to\infty]{\h^*}w$ and $\dot{u}_{k_n}(t)+w_{k_n}(t)-h(t)\in-A(t,u_{k_n}(t))$ for a.a $t\in I$. Whence \[\langle\dot{u}_{k_n}(t)+w_{k_n}(t)-\dot{u}(t)-w(t),u_{k_n}(t)-u(t)\rangle\<0,\] by $(\A_4)$. Accordingly, \[\langle\dot{u}_{k_n}(t)-\dot{u}(t),u_{k_n}(t)-u(t)\rangle\<\langle w(t)-w_{k_n}(t),u_{k_n}(t)-u(t)\rangle\] for a.a. $t\in I$ and for every $\n$.\par Observe that $u_{k_n}(t)\xrightarrow[n\to\infty]{\mathbb{H}}u(t)$ a.e. on $I$, at least for a subsequence. Fix $t_0\in I$ such that $u_{k_n}(t_0)\xrightarrow[]{\mathbb{H}}u(t_0)$. Engaging the integration by parts formula in $W_p^1$ we get
\begin{align*}
\frac{1}{2}\f(|u_{k_n}(1)-u(1)|_\mathbb{H}^2\g.&-\f|u_{k_n}(t_0)-u(t_0)|_\mathbb{H}^2\g)=\int_{t_0}^1\langle\dot{u}_{k_n}(s)-\dot{u}(s),u_{k_n}(s)-u(s)\rangle\,{\rm d}s\\&\<\int_{t_0}^1\langle w(s)-w_{k_n}(s),u_{k_n}(s)-u(s)\rangle\,{\rm d}s=\int_{t_0}^1\langle w(s)-w_{k_n}(s),u_{k_n}(s)-u(s)\rangle_\mathbb{H}\,{\rm d}s\\&\<\int_0^1|w(s)-w_{k_n}(s)|_\mathbb{H}|u_{k_n}(s)-u(s)|_\mathbb{H}\,{\rm d}s\<\f(\sup_\n||w_{k_n}-w||_{\h^*}\g)||u_{k_n}-u||_\h.
\end{align*} 
Thus \[|u_{k_n}(0)-u(0)|_\mathbb{H}^2=|u_{k_n}(1)-u(1)|_\mathbb{H}^2\xrightarrow[n\to\infty]{}0.\] Now, it is clear that \[0\<\lim_{n\to\infty}\sup_{t\in I}|u_{k_n}(t)-u(t)|_\mathbb{H}^2\<2\f(\sup_\n||w_{k_n}-w||_{\h^*}\g)\lim_{n\to\infty}||u_{k_n}-u||_\h+\lim_{n\to\infty}|u_{k_n}(0)-u(0)|_\mathbb{H}^2=0\] and so $(u_{k_n})_\n$ is norm convergent in $C(I,\mathbb{H})$.
\end{proof}

In many actual parabolic problems the presence of nonmonotone terms depending on lower-order derivatives forces us to think over the situation where the right-hand side $F$ is defined only on $I\times E$ and not on $I\times{\mathbb H}$. Application of fixed point approach in this context in conjunction with lightweight competence leads to over-restrictive assumptions, as illustrated by the following:

\begin{theorem}
Let $(E,\mathbb{H},E^*)$ be an evolution triple such that $E\hookrightarrow \mathbb{H}$ compactly. Assume that the multifunction $F\colon I\times E\map\mathbb{H}$ is such that
\begin{itemize}
\item[$($i$)$] for every $(t,x)\in I\times E$ the set $F(t,x)$ is nonempty and convex,
\item[$($ii$)$] the map $F(\cdot,x)$ has a strongly measurable selection for every $x\in E$,
\item[$($iii$)$] the graph $\Graph(F(t,\cdot))$ is sequentially closed in $(E,w)\times(\mathbb{H},w)$ for a.a. $t\in I$,
\item[$($iv$)$] there is a function $b\in L^q(I,\R{})$ and $c>0$ such that for all $x\in E$ and for a.a. $t\in I$,\[||F(t,x)||_{\mathbb{H}}^+\<b(t)+c|x|_E^{\frac{p}{q}}.\]
\end{itemize}
If hypotheses $(\A_1)$--$(\A_6)$ hold and $h\in\e^*$ is such that 
\begin{equation}\label{invariant}
\begin{cases}
c+||b||_q+||h||_{\e^*}\<d,\\
c+\hat{c}+||a||_q+||b||_q+||h||_{\e^*}\<1,
\end{cases}
\end{equation}
then problem \eqref{inclus} has a solution.
\end{theorem}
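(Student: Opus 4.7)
The proof mirrors that of Theorem~\ref{th1}, but the fixed-point argument must be moved from a ball in $\e$ to a compact convex subset of $\h$, so as to exploit the compact embedding $W_p^1\hookrightarrow\h$ (Aubin--Lions, given $E\hookrightarrow\mathbb{H}$ compactly). Set $A_h(t,x):=A(t,x)-h(t)$ and $L:=\frac{d}{dt}$ on $D(L):=\{x\in W_p^1\colon x(0)=x(1)\}$. As in the proof of Theorem~\ref{th1}, $L+N_{A_h}\colon D(L)\map\e^*$ is maximal monotone, coercive and surjective, so $K:=(L+N_{A_h})^{-1}$ is well defined. The growth assumption~(iv) together with $\mathbb{H}\hookrightarrow E^*$ makes $N_F\colon\e\map\h^*\subset\e^*$ well defined with convex weakly compact values, and $H:=K\circ N_{(-F)}\colon\e\map\e$ recasts \eqref{inclus} as the fixed-point problem $u\in H(u)$.

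The two inequalities of~\eqref{invariant} play distinct roles. For $u\in D_\e(0,1)$ and $x\in H(u)$ (so that $Lx+z=w$ with $z\in N_{A_h}(x)$ and $-w\in N_F(u)$), periodicity gives $\langle\langle Lx,x\rangle\rangle=0$, and $(\A_6)$ combined with the Minkowski estimate $\|w\|_{\h^*}\leqslant\|b\|_q+c\|u\|_\e^{p/q}$ and the identity $p/q=p-1$ yields $d\,\|x\|_\e^{p-1}\leqslant\|b\|_q+c+\|h\|_{\e^*}$, so the first inequality of~\eqref{invariant} forces $\|x\|_\e\leqslant 1$; a parallel computation using~$(\A_5)$ supplies $\|Lx\|_{\e^*}\leqslant\|a\|_q+\hat c+\|b\|_q+c+\|h\|_{\e^*}\leqslant 1$ from the second inequality. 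Hence $H(D_\e(0,1))$ is bounded in $W_p^1$ and, by Aubin--Lions, relatively compact in $\h$. Setting $C:=\overline{\co}^{\h}\bigl(\{0\}\cup H(D_\e(0,1))\bigr)$, the set $C$ is compact convex in $\h$; reflexivity of $\e$ renders $D_\e(0,1)$ closed in $\h$ (uniqueness of weak limits), whence $C\subset D_\e(0,1)$ and $H(C)\subset C$. As a compact convex subset of the Banach space $\h$, $C$ is, by Dugundji, an absolute extensor for compact metrizable spaces.

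It remains to verify that $H\colon C\to C$ is admissible in the $\h$-topology and then to apply Theorem~\ref{Lefschetz}. Acyclicity of $H(u)=K(N_{(-F)}(u))$ is handled as in Theorem~\ref{solvability}: the fibers $K^{-1}(\{x\})\cap N_{(-F)}(u)=\bigl(Lx+N_{A_h}(x)\bigr)\cap N_{(-F)}(u)$ are convex, $N_{(-F)}(u)$ is convex weakly compact, and Lemma~\ref{acyc2} applies. For upper semicontinuity, if $u_n\to u$ in $\h$ with $u_n\in C$, then $(u_n)$ is $W_p^1$-bounded, so $u_n\rightharpoonup u$ in $W_p^1$ along a subsequence; picking $v_n=K(w_n)\in H(u_n)$ with $-w_n\in N_F(u_n)$, the previous $W_p^1$ and $\h^*$ bounds give, after further extraction, $v_n\rightharpoonup v$ in $W_p^1$ (whence $v_n\to v$ in $\h$) and $w_n\rightharpoonup w$ in $\h^*$, and the monotonicity trick~\eqref{monotrick} of Theorem~\ref{th1} forces $v=K(w)$. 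The decisive obstacle is to conclude $-w\in N_F(u)$: weak $\e$-convergence does not yield pointwise weak $E$-convergence of $u_n(t)$ to $u(t)$, so~(iii) is not directly at hand. I would overcome this by Mazur's theorem---producing convex combinations $\tilde u_n\to u$ strongly in $\e$ (hence pointwise a.e.\ in $E$ along a subsequence) and simultaneously $\tilde w_n\to w$ strongly in $\h^*$---followed by the Convergence Theorem~\ref{convergence}, read with the codomain $\mathbb{H}$ and with the weak--weak upper hemicontinuity of $F(t,\cdot)$ delivered by~(i)--(iii), applied to the inclusions $-\tilde w_n(t)\in\overline{\co}\,F(t,B_E(\tilde u_n(t),\eps_n))$ for suitably chosen $\eps_n\to 0^+$. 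Once admissibility is in place, Theorem~\ref{Lefschetz} supplies a fixed point of $H$ in $C$, which is the sought solution of~\eqref{inclus}.
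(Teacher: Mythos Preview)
Your overall architecture is sound and tracks the paper closely: the reduction to $H=K\circ N_{(-F)}$, the use of both inequalities in~\eqref{invariant} to bound $H(D_\e(0,1))$ in $W_p^1$, the acyclicity of $H(u)$ via Lemma~\ref{acyc2}, and the appeal to Theorem~\ref{Lefschetz} are all correct. Your choice to work on a norm-compact convex set $C\subset\h$ rather than on $(D_{W_p^1}(0,R),w)$ as the paper does is a legitimate variant; the check that $D_\e(0,1)$ is $\h$-closed and that $C$ inherits a $W_p^1$-bound is fine.

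The genuine gap is exactly where you flag the ``decisive obstacle''. Your Mazur remedy does not close it. Passing to convex combinations $(\tilde u_n,\tilde w_n)\to(u,w)$ strongly in $\e\times\h^*$ does give $\tilde u_n(t)\to u(t)$ in $E$ a.e.\ along a subsequence, but the inclusion you need,
\[
-\tilde w_n(t)\in\overline{\co}\,F\bigl(t,B_E(\tilde u_n(t),\eps_n)\bigr),
\]
is unjustified: $-\tilde w_n(t)$ lies only in $\co\{F(t,u_k(t)):k\in S_n\}$ for the finite index set $S_n$ of the Mazur combination, and there is no reason whatsoever for the original $u_k(t)$, $k\in S_n$, to sit inside $B_E(\tilde u_n(t),\eps_n)$. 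They are merely terms of a weakly convergent sequence in $\e$, with no pointwise control in $E$. Theorem~\ref{convergence} therefore does not apply, and condition~(iii) is never brought to bear.

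The paper circumvents this by proving pointwise weak $E$-convergence directly. Since $(u_n)$ is bounded in $\e$ and $E$ is reflexive, $(u_n)$ is relatively weakly compact in $L^1(I,E)$, hence uniformly integrable by Dunford--Pettis. A short measure-theoretic estimate then forces $\ell\bigl(\{t:\sup_n|u_n(t)|_E=+\infty\}\bigr)=0$. On the complement one combines the a.e.\ strong $\mathbb H$-convergence $u_{k_n}(t)\to u(t)$ (from $W_p^1\hookrightarrow\h$ compactly) with reflexivity of $E$ to extract, at a.e.\ $t$, a subsequence weakly convergent in $E$; uniqueness of weak limits in $\mathbb H$ pins the limit to $u(t)$, yielding $u_{k_n}(t)\rightharpoonup u(t)$ in $E$ a.e. With this in hand, a weak--weak convergence lemma (the paper cites \cite{cichon}) applied to the weakly sequentially upper hemicontinuous map $F(t,\cdot)\colon(E,w)\map(\mathbb H,w)$ gives $-w\in N_F(u)$. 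Replace your Mazur paragraph by this argument and your proof goes through; alternatively, you may simply transplant it and work, as the paper does, on $(D_{W_p^1}(0,R),w)$, which is metrizable since $W_p^1$ is separable reflexive.
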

\begin{proof}
Let us use symbols $K\colon\h^*\map W_p^1$ and $N_{(-F)}\colon W_p^1\map\h^*$ to denote the operators we have defined previously in the proof of Theorem \ref{th1}. To tackle the problem of finding solutions of evolution inclusion \eqref{inclus} we shall indicate a fixed point of the superposition $H:=K\circ N_{(-F)}$.\par Assume that $u_n\xrightharpoonup[n\to\infty]{W_p^1}u$ and $x_n\in H(u_n)$ for $\n$. Let $w_n\in\h^*$ be such that $w_n\in N_{(-F)}(u_n)$ and $x_n\in K(w_n)$. Put $J:=\f\{t\in I\colon\sup_\n|u_n(t)|_E=+\infty\g\}$. Since $E$ is reflexive and the sequence $\z{u_n}$ is relatively weakly compact in the Bochner space $L^1(I,E)$, it must be uniformly integrable in view of Dunford-Pettis theorem (\cite[Theorem 2.3.24]{papa}). If $t\in J$, then for every $\lambdaup>0$ there is $n_0\in\mathbb{N}$ such that $|u_{n_0}(t)|_E\geqslant\lambdaup$. Fix $\lambdaup>0$. The following estimation can be easily verified \[\lambdaup\,\ell(J)=\int\limits_{\textstyle J}\lambdaup\,{\rm d}t\<\int\limits_{{\textstyle\{|u_{n_0}|_E\geqslant\lambdaup\}}}\!\!\!\!\!\!\lambdaup\,{\rm d}t\<\int\limits_{{\textstyle \{|u_{n_0}|_E\geqslant\lambdaup\}}}\!\!\!\!\!\!|u_{n_0}(t)|_E\,{\rm d}t\<\sup_\n\int\limits_{{\textstyle \{|u_n|_E\geqslant\lambdaup\}}}\!\!\!\!\!\!|u_n(t)|_E\,{\rm d}t.\] Now, if $\ell(J)>0$, then \[\lim_{\lambdaup\to+\infty}\sup_\n\int\limits_{{\textstyle \{|u_n|_E\geqslant\lambdaup\}}}\!\!\!\!\!\!|u_n(t)|_E\,{\rm d}t=\lim_{\lambdaup\to+\infty}\lambdaup\,\ell(J)=+\infty,\] which contradicts the uniform integrability of the sequence $\z{u_n}$. Therefore, \[\ell\f(\f\{t\in I\colon\sup_\n|u_n(t)|_E<+\infty\g\}\g)=\ell(I).\] \par Since the embedding $W_p^1\hookrightarrow\h$ is compact, we may assume that $u_n\xrightarrow[n\to\infty]{\h}u$. Thus, there exists a subset $I_0$ of full measure in $I$ such that $u_{k_n}(t)\xrightarrow[n\to\infty]{H}u(t)$ and at the same time $\sup_\n|u_{k_n}(t)|_E<+\infty$ for $t\in I_0$. Fix $t_0\in I_0$. For every bounded subsequence $\z{u_{l_{k_n}}(t_0)})$ there is a weakly convergent in $E$ subsequence $\z{u_{m_{l_{k_n}}}(t_0)}$. Clearly, $u_{m_{l_{k_n}}}(t_0)\xrightharpoonup[n\to\infty]{E}u(t_0)$ and eventually $u_{k_n}(t_0)\xrightharpoonup[n\to\infty]{E}u(t_0)$. Therefore, $u_{k_n}(t)\xrightharpoonup[n\to\infty]{E}u(t)$ a.e. on $I$.\par Using reflexivity of the space $\mathbb{H}$ and the fact that $F$ has sublinear growth and $x\mapsto F(t,x)$ is sequentially closed in $(E,w)\times(\mathbb{H},w)$ one can easily show that given a sequence $(x_n,y_n)$ in the graph $\Graph(F(t,\cdot)$ with $x_n\xrightharpoonup[n\to\infty]{E}x$, there is a subsequence $y_{k_n}\xrightharpoonup[n\to\infty]{{\mathbb H}}y\in F(t,x)$. This means, in particular, that $F(t,\cdot)$ is weakly sequentially upper hemi-continuous multimap for a.a. $t\in I$.\par Observe that $||w_{k_n}||_{\h^*}\<||b||_q+c\sup_\n||u_{k_n}||_\e^{p-1}$ for every $\n$. Of course, there is a subsequence (again denoted by) $\z{w_{k_n}}$ such that $-w_{k_n}\xrightharpoonup[n\to\infty]{\h^*}-w$. Hence, by the Convergence Theorem (cf. \cite[Lemmma 1]{cichon}), $w(t)\in -F(t,u(t))$ a.e. on $I$.\par As we have already shown in the proof of Theorem \ref{th1}, there must be a subsequence $\z{x_{l_{k_n}}}$ of the sequence $\z{x_{k_n}}$ such that $x_{l_{k_n}}\xrightharpoonup[n\to\infty]{W_p^1}x\in K(w)$. Since $w\in N_{(-F)}(u)$, it follows that $x\in H(u)$. The conducted reasoning proves in essence that for every relatively weakly compact $C\subset W_p^1$, the multimap map $H\colon(C,w)\map(W_p^1,w)$ is compact valued upper semicontinuous.\par Fix $u\in W_p^1$. The subset $N_{(-F)}(u)$ furnished with the relative weak topology of $\h^*$ is compact. Moreover, $(N_{(-F)}(u),w)$ is an acyclic space. The multimap $K\colon(N_{(-F)}(u),w)\map(H(u),w)$ may be regarded as an acyclic operator between compact topological space $N_{(-F)}(u)$ and a paracompact space $H(u)$ endowed with the relative weak topology of $W_p^1$. The fibers of this map are formed by intersections $(L+N_{A_h})(x)\cap N_{(-F)}(u)$. Hence, they are convex. Lemma \ref{acyc2} implies that the reduced Alexander--Spanier cohomologies $\w{H}^\ast((H(u),w))$ are isomorphic to $\w{H}^\ast((N_{(-F)}(u),w))$. In other words the set-valued map $H\colon(W_p^1,w)\map(W_p^1,w)$ possesses acyclic values.\par Assumption \eqref{invariant} entails \[\frac{||b||_q+||h||_{\e^*}}{d-c}\<1\;\;\text{and}\;\;\frac{||a||_q+||b||_q+||h||_{\e^*}}{1-(c+\hat{c})}\<1.\] Choose a radius $R>0$ in such a way that \[R\in\f[\max\f\{\f(\frac{||b||_q+||h||_{\e^*}}{d-c}\g)^\frac{q}{p},\frac{||a||_q+||b||_q+||h||_{\e^*}}{1-(c+\hat{c})}\g\},1\g].\] Then 
\begin{equation}\label{inv3}
d^{-1}\f(||b||_q+cR^{p-1}+||h||_{\e^*}\g)\<R^{p-1}
\end{equation}
and
\begin{equation}\label{inv4}
||a||_q+||b||_q+||h||_{\e^*}+(c+\hat{c})R^{p-1}\<R.
\end{equation}
Take an $x\in H\f(D_{W_p^1}(0,R)\g)$. Then $||x||_\e^{p-1}\<R^{p-1}$, by \eqref{inv2} and \eqref{inv3}. The latter combined with \eqref{szacu5} and \eqref{inv4} yields $||Lx||_{\e^*}\<R$. Therefore the length of vector $x$, measured in the equivalent norm $\max\{||\cdot||_\e,||L(\cdot)||_{\e^*}\}$ of the space $W_p^1$, is not greater then $R$. In other words, $H\f(D_{W_p^1}(0,R)\g)\subset D_{W_p^1}(0,R)$.\par As we have seen above, the operator $H\colon(D_{W_p^1}(0,R),w)\map(D_{W_p^1}(0,R),w)$ is an admissible multimap. Since $H\f(D_{W_p^1}(0,R)\g)$ is norm bounded in $W_p^1$, it is also a compact map. In view of the Dugundji extension theorem \cite[Theorem 4.1]{dugundji}, the ball $D_{W_p^1}(0,R)$ constitutes an absolute extensor for the class of metrizable spaces as a convex subset of a locally convex linear space $(W_p^1,w)$. Since $W_p^1$ is separable, the space $(D_{W_p^1}(0,R),w)$ must be metrizable (\cite[Proposition 3.107]{fabian}). From Theorem \ref{Lefschetz} it follows directly that the multimap $H\colon(D_{W_p^1}(0,R),w)\map(D_{W_p^1}(0,R),w)$ must have at least one fixed point.
\end{proof}

\begin{remark}
{\em Generic approach to evolution inclusions governed by operators monotone in the sense of Minty--Browder essentially comes down to the observation that the sum of densely defined monotone differential operator and suitably regular, multivalued perturbation is surjective. Proof of Theorem \ref{th1} illustrates the thesis that the ``fixed point method'' provides a real alternative to this approach.}
\end{remark}

\noindent{\bf Example 6.}
The last two examples are dedicated to boundary value problems in which the nonlinear part F possesses not necessarily convex values. The first of these concerns solvability of the Hammerstein integral inclusion \eqref{hammerstein}.
\begin{theorem}\label{nonconvex}
Let $E$ be a separable Banach space and $1\<q\<p<\infty$. Assume that $k\colon I^2\to{\mathcal L}(E)$ satisfies $(\ka_1)$--$(\ka_3)$, while $F\colon I\times E\map E$ satisfies the following conditions:
\begin{itemize}
\item[$($i$)$] for every $(t,x)\in I\times E$ the set $F(t,x)$ is nonempty and closed,
\item[$($ii$)$] the map $F(t,\cdot)$ is lower semicontinuous for each fixed $t\in I$,
\item[$($iii$)$] the map $F(\cdot,\cdot)$ is measurable with respect to the product of Lebesgue and Borel $\sigma$-fields defined on $I$ and $E$, respectively, 
\item[$($iv$)$] there is a function $b\in L^q(I,\R{})$ and $c>0$ such that for all $x\in E$ and for a.a. $t\in I$,\[||F(t,x)||^+:=\sup\{|y|_E\colon y\in F(t,x)\}\<b(t)+c|x|^{\frac{p}{q}},\]
\item[$($v$)$] there exists a function $\eta\in L^\frac{pq}{p-q}(I,\R{})$ such that for all bounded subsets $\Omega\subset E$ and for a.a. $t\in I$ the inequality holds \[\beta(F(t,\Omega))\<\eta(t)\beta(\Omega).\]
\end{itemize}
If there is an $R>0$ such that \eqref{ls2} holds together with 
\begin{equation}
||\,||k(t,\cdot)||_r\,||_p||\eta||_{\frac{pq}{p-q}}<1,
\end{equation}
then the integral inclusion \eqref{hammerstein} has at least one $p$-integrable solution.
\end{theorem}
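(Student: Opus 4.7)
The plan is to parallel the proof of Theorem \ref{hamth}, replacing the convex-valued Nemytski\u{\i} argument by a continuous selection of Bressan--Colombo type. Under (i)--(iii) together with the growth bound (iv), the Nemytski\u{\i} operator $N_F\colon L^p(I,E)\map L^q(I,E)$ has nonempty closed decomposable values and is lower semicontinuous: nonemptiness follows from (iii)--(iv) via an Aumann-type measurable selection, closedness and decomposability are immediate from the pointwise definition, while lower semicontinuity of $N_F$ follows from the lower semicontinuity of $F(t,\cdot)$ by a standard Scorza-Dragoni/Fatou argument. By the Bressan--Colombo continuous selection theorem one then obtains a continuous single-valued selection $\fa\colon L^p(I,E)\to L^q(I,E)$ with $\fa(u)\in N_F(u)$ for every $u\in L^p(I,E)$, which in particular satisfies $|\fa(u)(t)|\<b(t)+c|u(t)|^{p/q}$ a.e. on $I$.

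I would then work with the continuous single-valued superposition $T:=K\circ\fa\colon L^p(I,E)\to L^p(I,E)$, where $K$ is the affine Hammerstein operator \eqref{opK}. Reproducing the estimate \eqref{K(w)} verbatim with $\fa(u)$ in place of an arbitrary $w\in N_F(u)$, the growth bound (iv) combined with \eqref{ls2} gives $T\f(D_{L^p}(h,R)\g)\subset D_{L^p}(h,R)$; in particular, the Leray--Schauder boundary condition \eqref{LS} holds on $U:=B_{L^p}(h,R)$ with $x_0:=h$.

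To verify the M\"onch-type condition \eqref{chnom2}, I would take a countable $M=\x{u_n}\subset D_{L^p}(h,R)$ satisfying $M\subset\overline{\co}\big(\{h\}\cup T(M)\big)$, set $v_n:=T(u_n)=K(\fa(u_n))$, and repeat the Hammerstein MNC calculation from Theorem \ref{hamth}. Condition (v) yields
\[\beta\f(\x{\fa(u_n)(s)}\g)\<\eta(s)\,\beta\f(\x{u_n(s)}\g)\<\eta(s)\,\beta\f(\x{v_n(s)-h(s)}\g)\]
a.e. on $I$, and then \cite[Corollary 3.1]{heinz} combined with the contraction-type hypothesis $||\,||k(t,\cdot)||_r\,||_p||\eta||_{pq/(p-q)}<1$ forces $\beta(\x{v_n(t)-h(t)})=0$ a.e., so the vertical slices $M(t)$ are relatively compact. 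As in the concluding step of the proof of Theorem \ref{hamth}, this upgrades to compactness of $\overline{M}$ in $L^p(I,E)$.

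Finally, Theorem \ref{fixed2}(ii), applied to the continuous single-valued map $T\colon\overline{U}\to L^p(I,E)$ (trivially upper semicontinuous with compact convex values), delivers a fixed point $x\in D_{L^p}(h,R)$ of $T$, providing the required $p$-integrable solution of \eqref{hammerstein}. The main obstacle is the lower semicontinuity of $N_F$ between the Bochner spaces $L^p(I,E)$ and $L^q(I,E)$, needed to apply Bressan--Colombo; once the continuous selection $\fa$ is in hand, the remaining a~priori estimates and MNC bookkeeping are essentially identical to those already carried out for Theorem \ref{hamth}.
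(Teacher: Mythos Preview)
Your approach is essentially the same as the paper's: obtain a continuous selection $\fa$ of $N_F$ via Bressan--Colombo, set $H:=K\circ\fa$, and rerun the a~priori estimate \eqref{K(w)} and the MNC computation of Theorem \ref{hamth} (now with the ordinary Hausdorff MNC $\beta$, since $E$ is separable). The paper concludes by invoking Theorem \ref{fixed} rather than Theorem \ref{fixed2}(ii), but for a single-valued continuous map this is immaterial.

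The one place where the paper is more explicit than your sketch is precisely the point you flag as ``the main obstacle'': the lower semicontinuity of $N_F\colon L^p(I,E)\map L^q(I,E)$. The paper does not rely on a Scorza--Dragoni/Fatou argument but instead passes through the space $S(I,E)$ of measurable functions with the topology of convergence in measure: under (i)--(iii), $N_F\colon S(I,E)\map S(I,E)$ is lower semicontinuous (\cite{ponosov}); if $u_n\to u$ in $L^p$ then $u_n\to u$ in measure, so for any $w\in N_F(u)$ one gets $w_n\in N_F(u_n)$ with $w_n\to w$ in measure; finally the growth bound (iv) yields uniform $q$-integrability of $\{w_n\}$, and Vitali's convergence theorem upgrades $w_n\to w$ to convergence in $L^q$. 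This is the concrete mechanism you would need to substitute for the hand-waved step.
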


\begin{proof}
Denote by $S(I,E)$ the space of all Lebesgue measurable functions mapping $I$ to $E$, equiped with the topology of convergence in measure. Under these assumptions (cf. \cite[p.\mbox{ }731]{ponosov}), the Nemytski\v{\i} operator $N_F\colon S(I,E)\map S(I,E)$ is lower semicontinuous. Consider a sequence $(u_n)_\n$ such that $u_n\xrightarrow[n\to\infty]{L^p(I,E)}u$. In particular, $u_n\xrightarrow[n\to\infty]{}u$ in measure. If $w$ is an arbitrary element of $N_F(u)$, then there exists a sequence $w_n\in N_F(u_n)$ such that $w_n\xrightarrow[n\to\infty]{}w$ in $S(I,E)$. From every subsequence of $(w_n)_\n$ we can extract some subsequence $(w_{k_n})_\n$ satisfying $w_{k_n}(t)\xrightarrow[n\to\infty]{E}w(t)$ a.e. on $I$. W.l.o.g we may assume that $u_{k_n}(t)\xrightarrow[n\to\infty]{E}u(t)$ for a.a. $t\in I$. Assumption (iv) means that $|w_{k_n}(t)|\<b(t)+c|u_{k_n}(t)|^\frac{p}{q}$ a.e. on $I$. Since $\x{|u_{k_n}(\cdot)|^p}$ is uniformly integrable, the latter implies that the family $\x{|w_{k_n}(\cdot)|^q}$ is uniformly integrable. Secondly, under passage to the limit one sees that $|w(t)|\<b(t)+c|u(t)|^\frac{p}{q}$ for a.a. $t\in I$, i.e. $w\in L^q(I,E)$. Hence, the uniform integrability of $\x{|w_{k_n}(\cdot)-w(\cdot)|^q}$ follows. In view of Vitali convergence theorem $\lim\limits_{n\to\infty}\int_I|w_{k_n}(t)-w(t)|^qdt=0$. Consequently, $w_n\xrightarrow[n\to\infty]{L^q(I,E)}w$. Therefore, the multimap $N_F\colon L^p(I,E)\map L^q(I,E)$ meets the definition of lower semicontinuity. It is also clear that this operator possesses closed and decomposable values. In view of \cite[Theorem 3]{brescol} there exists a continuous map $f\colon L^p(I,E)\to L^q(I,E)$ such that $f(u)\in N_F(u)$ for every $u\in L^p(I,E)$.\par Let $K\colon L^q(I,E)\to L^p(I,E)$ be given by \eqref{opK}. Observe that solving the equation $u=K(f(u))$ means to find a solution of the Hammerstein integral inclusion \eqref{hammerstein}.\par We have proven previously (see p.16) that the composite map $K\circ N_F$ satisfies the boundary condition \eqref{LS} provided $R>0$ is matched according to \eqref{ls2}. Therefore, operator $H\colon D_{L^p}(h,R)\to L^p(I,E)$, given by $H:=K\circ f$, will meet condition \eqref{LS} as well. This operator satisfies also condition \eqref{munch}. It can be shown in a manner strictly analogous to that demonstrated in the proof of Theorem \ref{hamth}. Since the space $E$ is separable, there is no need in particular to pass to the relative MNC $\beta_{E_0}$ and inequality \eqref{v_n} and successive ones hold under current assumptions as well. Finally, let us note that $H$ is admissible as a univalent continuous map. From the direct application of Theorem \ref{fixed} follows, in this regard, that it must possess a fixed point $u\in D_{L^p}(h,R)$.
\end{proof}

\noindent{\bf Example 7.}
The last example illustrates the application of Rothe-type fixed point argument, as a conclusion stemming from Theorem \ref{fixed}, in order to show the existence of solutions of the periodic problem \eqref{inclus} with non-convex perturbation term $F$.
\begin{theorem}
Let $(E,\mathbb{H},E^*)$ be an evolution triple such that $E\hookrightarrow \mathbb{H}$ compactly. Assume that the operator $A\colon I\times E\map E^*$ satisfies $(\A_1)$--$(\A_6)$, while the map $F\colon I\times {\mathbb H}\map{\mathbb H}$ fulfills conditions {\em (i)--(iv)} of Theorem \ref{nonconvex}. Suppose further that the inequality \eqref{stala} holds. Then problem \eqref{inclus} has at least one solution.
\end{theorem}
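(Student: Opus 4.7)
The plan is to reduce to the convex-valued framework of Theorem~\ref{th1} by replacing the non-convex multi-operator $N_{(-F)}$ with a continuous single-valued selection, and then to recycle almost verbatim the coercivity/compactness apparatus already developed there. I would retain the external operator $K:=(L+N_{A_h})^{-1}\colon\h^*\map\h$ constructed in that proof: $K$ is strongly upper semicontinuous with compact convex values and its inverse absorbs the periodic boundary condition $x(0)=x(1)$. Because $F$ now has only closed (non-convex) values, Theorem~\ref{solvability} is no longer directly applicable to the superposition $K\circ N_{(-F)}$, and the Bressan--Colombo continuous selection theorem will serve as the substitute for the acyclicity/admissibility step.

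Exactly as at the opening of the proof of Theorem~\ref{nonconvex}, hypotheses (i)--(iv) force the Nemytski\v{\i} operator $N_F\colon\h\map\h^*$ to be lower semicontinuous with closed decomposable values: (ii) and (iii) yield lower semicontinuity of $N_F$ into $S(I,\mathbb{H})$ equipped with the topology of convergence in measure, and the sublinear growth (iv) combined with Vitali's convergence theorem then upgrades the target topology to the norm of $\h^*=L^q(I,\mathbb{H})$. Since $\h$ is separable, the Bressan--Colombo selection theorem produces a continuous $f\colon\h\to\h^*$ with $f(u)\in N_F(u)$ for every $u$. Put $H:=K\circ f\colon\h\map\h$. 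As the composition of the compact convex valued upper semicontinuous $K$ with a continuous single-valued $f$, $H$ is upper semicontinuous with compact convex (in particular acyclic, hence admissible) values; and the pointwise bound $|f(u)(t)|_\mathbb{H}\<b(t)+c|u(t)|_\mathbb{H}^{p/q}$ is the only feature of $N_F$ that the proof of Theorem~\ref{th1} ever exploits, so the a priori estimate $||x||_\e^{p-1}\<d^{-1}(||b||_q+c||u||_\h^{p-1}+||h||_{\e^*})$ for $x\in H(u)$ transfers directly. In particular $H$ carries bounded subsets of $\h$ into sets bounded in $W_p^1$, hence relatively compact in $\h$ by the compact embedding $W_p^1\hookrightarrow\h$, so condition \eqref{monch} is automatic on every closed ball.

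Choose
\[
R:=\f(\frac{||b||_q+||h||_{\e^*}}{d-c}\g)^\frac{q}{p},
\]
well defined thanks to \eqref{stala}. Repeating the closing computation of the proof of Theorem~\ref{th1} with $w:=f(u)$ in place of a generic element of $N_F(u)$ shows that, for $u\in\partial D_\h(0,R)$, $\lambdaup u\in H(u)$ forces $\lambdaup\<1$; in fact, because $|\cdot|_\mathbb{H}\<|\cdot|_E$, one even gets $H(D_\h(0,R))\subset D_\h(0,R)$, which is Rothe's boundary condition from Remark~\ref{rem}(ii) and thus implies \eqref{LS} with $x_0=0$. Theorem~\ref{fixed}, applied to $H\colon\overline{D_\h(0,R)}\map\h$, then delivers a fixed point $u\in D_\h(0,R)$; unfolding $u\in K(f(u))$ with $f(u)\in N_F(u)$ is exactly the assertion that $u$ solves \eqref{inclus}. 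The main technical obstacle, and the only genuinely new point compared with Theorem~\ref{th1}, is the passage to the continuous selection: one has to upgrade the lower semicontinuity of $N_F\colon\h\map\h^*$ from convergence in measure on $S(I,\mathbb{H})$ to the norm topology of $\h^*$ before Bressan--Colombo can be invoked, and it is precisely here that the sublinear growth (iv) is decisive via Vitali.
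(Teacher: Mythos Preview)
Your proposal is correct and follows essentially the same route as the paper: obtain a continuous selection $f$ of $N_F$ via Bressan--Colombo (upgrading lower semicontinuity from $S(I,\mathbb{H})$ to $\h^*$ through growth condition (iv) and Vitali, exactly as in the proof of Theorem~\ref{nonconvex}), set $H:=K\circ(-f)$ with $K=(L+N_{A_h})^{-1}$, and then recycle verbatim the coercivity, compactness, and Leray--Schauder/Rothe estimates from the proof of Theorem~\ref{th1} before invoking Theorem~\ref{fixed}. Your write-up is in fact more explicit than the paper's on why Rothe's boundary condition holds and why the M\"onch condition is automatic; the only cosmetic slip (shared with the paper) is that $H$ should strictly be $K\circ(-f)$ rather than $K\circ f$, which is immaterial to the argument.
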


\begin{proof}
In accordane with what we have shown in the proof of Theorem \ref{nonconvex} there must exist a continuous selection $f\colon\h\to\h^*$ of the Nemytski\v{\i} operator $N_F$. The proof of Theorem \ref{th1} leaves no doubt that the external operator $K\colon\h^*\map\h$, given by $K:=(L+N_{A_h})^{-1}$, is an upper semicontinuous multimap with compact convex values. Therefore, the map $H\colon\h\map\h$ such that $H:=K\circ f$ is an admissible one. Proof of Theorem \ref{th1} also illustrates the hypothesis that operator $H\colon D_\h(0,R)\map\h$ satisfies the boundary condition \eqref{LS}, if you adopt that the radius $R$ is not less than $\f(||b||_q+||h||_{\e^*})/(d-c)\g)^\frac{q}{p}$. And finally, the proof of this theorem sets forth reasons for the compactness of the operator $H\colon D_\h(0,R)\map\h$. Hence, we may again evoke the fixed point result expressed in Theorem \ref{fixed} to show the existence of a fixed point of $H$ i.e., a solution of the problem \eqref{inclus}.
\end{proof}

\end{document}